\newcommand{\C}{\mathbb{C}}
\newcommand{\R}{\mathbb{R}}
\newcommand{\N}{\mathbb{N}}
\newcommand{\eps}{\varepsilon}
\newcommand{\delim}[3]{\left#1 #3 \right#2}
\newcommand{\norma}[1]{\delim{\|}{\|}{#1}}
\newcommand{\set}[1]{\delim{\{}{\}}{#1}}
\begin{document}


\title[Parabolic problems in oscillatory thin domains]{Parabolic problems in highly oscillating thin domains}

\author[M. C. Pereira]{Marcone C. Pereira$^\dagger$}

\thanks{$^\dagger$Partially
supported by CNPq 302847/2011-1, CAPES/DGU 267/2008 and FAPESP 2008/53094-4, Brazil}

\address[M. C. Pereira]{Escola de Artes, Ci\^encias e Humanidades,
Universidade de S\~ao Paulo, Rua Arlindo B\'ettio, 03828-000 S\~ao Paulo SP, Brazil}

\email{marcone@usp.br}

\subjclass[2010]{35R15, 35B27, 35B40, 35B41, 35B25, 74Q10} 

\keywords{Partial differential equations on infinite-dimensional spaces, asymptotic behavior of solutions, attractors, singular perturbations, thin domains, oscillatory behavior, lower semicontinuity, homogenization} 

\maketitle
\numberwithin{equation}{section}
\newtheorem{theorem}{Theorem}[section]
\newtheorem{lemma}[theorem]{Lemma}
\newtheorem{corollary}[theorem]{Corollary}
\newtheorem{proposition}[theorem]{Proposition}
\newtheorem{definition}[theorem]{Definition}
\newtheorem{remark}[theorem]{Remark}
\allowdisplaybreaks

\begin{abstract} 

In this work we consider the asymptotic behavior of the nonlinear semigroup defined by a semilinear parabolic problem with homogeneous Neumann boundary conditions posed in a region of $\R^2$ that degenerates into a line segment when a positive parameter $\epsilon$ goes to zero (a \emph{thin domain}).
Here we also allow that its boundary presents highly oscillatory behavior with different orders and variable profile.
We take thin domains possessing the same order $\epsilon$ to the thickness and amplitude of the oscillations but assuming different order to the period of oscillations on the top and the bottom of the boundary. 
We combine methods from linear homogenization theory and the theory on nonlinear dynamics of dissipative systems  to  obtain the limit problem establishing convergence properties for the solutions. At the end we show the upper semicontinuity of the attractors and stationary states.

\end{abstract}

\section{Introduction}

In this paper we are interested in analyzing the asymptotic behavior of the solutions of a semilinear parabolic problem with homogeneous Neumann boundary condition in a thin domain $R^\epsilon$ with a highly oscillatory behavior in its boundary as illustrated in Figure \ref{ThinDomain-1}.


\begin{figure}[h]\label{ThinDomain-1}
\centering \scalebox{1.1}{\includegraphics{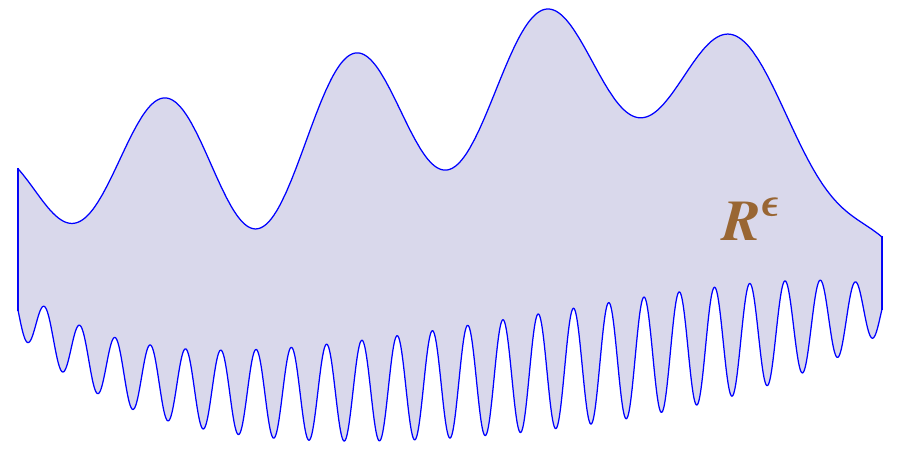}}
\caption{Thin domain with a highly oscillatory boundary.}
\end{figure}

Let $G_\epsilon$, $H_\epsilon: (0,1) \mapsto (0,\infty)$ be two positive smooth functions satisfying  
$0< G_0 \le G_\epsilon(x) \le G_1$ and $0 < H_0 \le H_\epsilon(x) \le H_1$ for all $x \in (0,1)$ and $\epsilon > 0$, where $G_0$, $G_1$, $H_0$ and $H_1$ are constants independent of $\epsilon$, and consider the bounded open region $R^\epsilon$ given by
\begin{equation} \label{TDG}
R^\epsilon = \{ (x,y) \in \R^2 \; | \;  x\in (0,1) \textrm{ and } - \epsilon \, G_\epsilon(x) < y< \epsilon \, H_\epsilon(x)  \}.
\end{equation}
Note that functions $G_\epsilon$ and $H_\epsilon$ define the lower and upper boundary of the 2-dimensional thin domain $R^\epsilon$ with order of thickness $\epsilon$. 
Here we allow $G_\epsilon$ and $H_\epsilon$ to present different orders and profiles of oscillations. The upper boundary established by $\epsilon \, H_\epsilon$ present same order of amplitude, period and thickness, but, the lower boundary given by $\epsilon \, G_\epsilon$ possess oscillation order larger than the compression order $\epsilon$ of the thin domain. 
We express this assuming that
$$
\begin{gathered}
G_\epsilon(x) = G(x,x/\epsilon^\alpha), \quad \alpha > 1, \\
\textrm{ and } \quad 
H_\epsilon(x) = H(x,x/\epsilon), 
\end{gathered}
$$
where the functions $G$, and $H:[0,1] \mapsto (0,\infty)$ are smooth functions with $y \to G(x,y)$ and $y \to H(x,y)$ periodic in variable $y$ with constant period $l_g$ and $l_h$ respectively. 


In the thin domain $R^{\epsilon}$ we look at the semilinear parabolic evolution equation 
\begin{equation} \label{BPO}
\left\{
\begin{split}
& w^\epsilon_t - \Delta w^\epsilon + w^\epsilon = f(w^\epsilon),
\quad \textrm{ in } R^\epsilon, \\
&\frac{\partial w^\epsilon}{\partial \nu^\epsilon} = 0
\quad \textrm{ on } \partial R^\epsilon, \end{split}
\right. t>0,
\end{equation}
where $\nu^\epsilon$ 
is the unit outward normal to $\partial R^\epsilon$,  $\frac{\partial }{\partial \nu^\epsilon}$ is the outwards normal derivative  
 and the function $f:\R \mapsto \R$ is a $\mathcal{C}^2$-function with bounded derivatives. 
Since we are interested in the behavior of solutions as $t\to \infty$ and its dependence with respect to the small parameter $\eps$, we require
that the solutions of \eqref{BPO} are bounded for large values of time.  A natural assumption to obtain this boundedness is given by the following dissipative
condition
\begin{equation} \label{HF}
\limsup_{|s| \to \infty} \frac{f(s)}{s} < 0.
\end{equation}

From the point of view of investigating the asymptotic dynamics assuming $f$ with bounded derivatives does not imply any restriction since we are interested in dissipative nonlinearities. Indeed, it follows from \cite{ACL1, ACB} that under the usual growth assumptions, the attractors are uniformly bounded in $L^\infty(\Omega^\epsilon)$ with respect to $\epsilon$ and we may cut the nonlinearities in a suitable way making them bounded with bounded derivatives.
Recall that an attractor is a compact invariant set which attracts all bounded sets of the phase space. It contains all the asymptotic dynamics of the system and all global bounded solutions lie in the attractor.

In order to analyze problem \eqref{BPO} and its related linear elliptic and parabolic problem we first perform a simple  change of variables which consists in stretching in the
$y$-direction by a factor of $1/\epsilon$. As in \cite{HR,R,PR01}, we use $x_1=x, x_2=y/\epsilon$ to transform $R^\epsilon$ into the domain 
\begin{equation} \label{domain}
\Omega^\epsilon = \{ (x_1,x_2) \in \R^2 \; | \;  x_1 \in (0,1) \textrm{ and } - G_\epsilon(x_1) < x_2 < H_\epsilon(x_1) \}.
\end{equation}
By doing so, we obtain a domain which is not thin anymore although it presents very highly oscillatory behavior given by the fact that the upper and lower boundary
are the graph of the oscillating functions $G_\epsilon$ and $H_\epsilon$.  
Under this change, equation \eqref{BPO} is transformed into 
\begin{equation}\label{RP} 
\left\{
\begin{split}
& u^\epsilon_{t} - \frac{\partial^2 u^\epsilon}{{\partial x_1}^2} - 
\frac{1}{\epsilon^2} \frac{\partial^2 u^\epsilon}{{\partial x_2}^2} + u^\epsilon = f(u^\epsilon)
\quad \textrm{ in } \Omega^\epsilon \\
& \frac{\partial u^\epsilon}{\partial x_1} N_1^\epsilon + \frac{1}{\epsilon^2} \frac{\partial u^\epsilon}{\partial x_2}N_2^\epsilon = 0
\quad \textrm{ on } \partial \Omega^\epsilon 
\end{split}
\right. t>0,
\end{equation}
where $N^\eps=(N^\eps_1,N^\eps_2)$ is the outward normal to the boundary of $\Omega^\epsilon$. 

Observe the factor $1/\eps^2$ in front of the derivative in the $x_2$ direction which means a very fast diffusion in the vertical direction.  In some
sense, we have substituted the thin domain $R^\eps$ with a non thin domain $\Omega^\eps$ but with a very strong diffusion mechanism in the $x_2$-direction.   
Because of the presence of this very strong diffusion mechanism it is expected that solutions of \eqref{RP}  to become homogeneous in the $x_2$-direction so that the limiting solution will not have a dependence in this direction and therefore the limiting problem will be one dimensional. This fact is in agreement with the intuitive idea that an equation in a thin domain should approach an equation in a line segment. 

We get the following limit problem to \eqref{RP} as $\epsilon$ goes to zero:
\begin{equation}\label{LRP} 
\left\{
\begin{split}
& u^\epsilon_{t} - \frac{1}{p(x)} \left( q(x) \, u_x \right)_x + u = f(u),
\quad x \in (0,1), \\
& u_x(0) = u_x(1) = 0, 
\end{split}
\right. \quad t>0,
\end{equation}
where the smooth positive functions $p$ and $q:(0,1) \mapsto (0,\infty)$ are given by 
$$
\begin{gathered}
q(x) =  \frac{1}{l_h} \int_{Y^*(x)} \left\{ 1 - \frac{\partial X(x)}{\partial y_1}(y_1,y_2) \right\} dy_1 dy_2, \\ 
p(x) = \frac{|Y^*(x)|}{l_h} + \frac{1}{l_g} \int_0^{l_g} G(x,y) \, dy - G_0(x), \\
G_0(x) = \min_{y \in \R} G(x,y),
\end{gathered}
$$
and $X(x)$ is the unique solution of the problem 
$$
\left\{
\begin{array}{l}
- \Delta X(x)  =  0  \textrm{ in } Y^*(x)  \\
\frac{\partial X(x)}{\partial N}  =  0  \textrm{ on } B_2(x)  \\
\frac{\partial X(x)}{\partial N}  =  N_1 \textrm{ on } B_1(x)  \\
X(x) \textrm{ $l_h$-periodic on } B_0(x) \\
\int_{Y^*(x)} X(x) \; dy_1 dy_2  =  0  
\end{array}
\right.
$$
in the representative cell $Y^*(x)$ given by
$$
Y^*(x) = \{ (y_1,y_2) \in \R^2 \; | \; 0< y_1 < l_h, \quad -G_0(x) < y_2 < H(x,y_1) \}, 
$$
where $B_0(x)$, $B_1(x)$ and $B_2(x)$ are lateral, upper and lower
boundary of $\partial Y^*(x)$ for $x \in (0,1)$.

If the nonlinearity $f$ satisfies the dissipative conditions \eqref{HF}, then both equations \eqref{RP} and \eqref{LRP} define nonlinear semigroups that possess global attractors $\mathscr{A}_\epsilon \subset H^1(\Omega^\epsilon)$ and $\mathscr{A}_0\subset H^1(0,1)$ respectively. Here in this work we get the continuity of the nonlinear semigroup, as well as, the upper semicontinuity of the family of the attractors $\mathscr{A}_\epsilon$ and the equilibria set at $\epsilon = 0$ obtaining convergence properties for the dynamics set up by problems \eqref{RP} and \eqref{LRP}.

There are several works in the literature dealing with partial differential equations in thin domains presenting oscillating boundaries. We mention \cite{MP2,MP} who studied the asymptotic approximations of solutions to parabolic and elliptic problems in thin perforated domain with rapidly varying thickness, and  \cite{BGG,BGM,Apl3} who consider nonlinear monotone problems in a multidomain with a highly oscillating boundary. 
In addiction, we also cite \cite{AB,BZ, BFF}, in which the asymptotic description of nonlinearly elastic thin films with fast-oscillating profile was successfully obtained in a context of \emph{$\Gamma$-convergence}  \cite{Dal}.

Recently we have considered in \cite{ACPS,AP,AP2,AP3,PS} many classes of oscillating thin domains discussing limit problems and convergence properties. 
We also mention \cite{AVP} who deal with a linear elliptic problem in a thin domain presenting doubly oscillatory behavior which is related to the present one studied here but with constant profile, that means, assuming $G_\epsilon(x)=g(x/\epsilon)$ and $H_\epsilon(x)=h(x/\epsilon)$ for some periodic functions $g$ and $h$. This situation is some times called as \emph{purely periodic case}.
Our goal here is to consider a semilinear parabolic problem in $R^\epsilon$ also presenting doubly oscillatory behavior but now with variable profile generally called \emph{locally periodic case}.
We allow much more complicated shapes combining oscillating orders establishing the limit problem, as well as, its dependence with respect to the thin domain geometry. 
Indeed, we get an explicit relationship among the limit equation, the oscillation, the profile and thickness of the thin domain.

It is worth observing that is not an easy task. In order to do so, we first need to combine different techniques introduced  in \cite{AP2, AP3} and \cite{AVP} to investigate the linear elliptic problem. We use \emph{extension operators} and \emph{oscillating test functions} from \emph{homogenization theory} with \emph{boundary perturbation} results to obtain the limit problem for the elliptic equation. 
Next we apply the \emph{theory of dissipative systems and attractors} to be able to obtain  the continuity of the nonlinear semigroup and the upper semicontinuity of the attractors and stationary states of the parabolic problem here proposed.

We refer to \cite{BLP,CD,CP,SP,Tt} and \cite{ACP, Hale, Henry, PP} for a general introduction to the homogenization theory and the theory of dissipative systems and attractors respectively.    
There are not many results on the behavior of global attractors of dissipative  systems under a perturbation related to homogenization. We would like to cite \cite{BCP,BCK, Fiedler-Vishik-2001, Fiedler-Vishik-2003}.

Finally, we point out that thin structures with rough contours (thin rods,  plates or shells) or fluids filling out thin domains (lubrication) or even chemical diffusion process in the presence of grainy narrow strips (catalytic process) are very common in engineering and applied science.  The analysis of the properties of these structures and the processes taking place on them and understanding how the micro geometry of the thin structure affects the macro properties of the material is a very relevant issue in engineering  and material design.  
Thus,  being able to obtain the limiting equation of a prototype equation in different structures where the micro geometry is not necessarily smooth and being able to analyze how the different micro scales affects the limiting problem goes in this direction and will allow the study and understanding in more complicated situations. 
See \cite{Apl3,Apl1,Apl2,Apl4} for some concrete applied problems.

This paper is organized as follows.
In Section \ref{SBF} we set up the notation and state some technical results which will be used later in the proofs. 
In Section \ref{PWPC} we investigate the linear elliptic problem on thin domains assuming also that $G_\epsilon$ and $H_\epsilon$ are \emph{piecewise periodic functions} obtaining Lemma \ref{PPCT}.
Next, in Section \ref{GenPro}, we use Lemma \ref{PPCT} and the continuous dependence result on the domain given by Proposition \ref{BPT} in order to provide a proof of the main result with respect to the linear elliptic problem associated to \eqref{RP}, namely Theorem \ref{ET}. 
In Section \ref{SCLSg} we obtain the continuity of the linear semigroup defined by \eqref{RP} from Theorem \ref{ET}, and in Section \ref{S-USC} we proof the main result of the paper related to the parabolic problem \eqref{RP} getting the upper semicontinuity of the family of attractors and stationary state by Theorem \ref{USCont}.

We also note that although we deal with Neumann boundary conditions,  
we may also consider different conditions in the lateral boundaries of the thin
domain $R^\epsilon$ since we preserve the Neumann type boundary condition in the upper and lower boundary.  
Dirichlet or even Robin homogeneous can be set in the lateral
boundaries of the problem \eqref{RP}. The limit problem will preserve this boundary condition as a point condition.

\section{Basic facts and notations} \label{SBF}

Let us consider two families of positive functions $G_\eps$, $H_\epsilon: (0,1) \to (0,\infty)$, with $\eps\in (0, \eps_0)$ for some $\eps_0>0$ satisfying the following hypothesis

\textbf{(H)} There exist nonnegative constants $G_0$, $G_1$, $H_0$ and $H_1$ such that 
$$
\begin{gathered}
0< G_0 \le G_\epsilon(x) \le G_1 \quad \textrm{ and }  \quad
0< H_0 \le H_\epsilon(x) \le H_1, 
\end{gathered}
$$
for all $x\in (0,1)$ and  $\eps\in (0,\eps_0)$.
Moreover, the functions $G_\eps$ and $H_\epsilon$ are of the type 
\begin{equation}\label{def-G}
\begin{gathered}
G_\eps(x) = G(x,x/\epsilon^\alpha), \quad \textrm{ for some } \alpha>1, \textrm{ and } \quad  H_\eps(x)=H(x,x/\eps), 
\end{gathered}
\end{equation}
where the functions
$H$, $G:[0,1]\times \R \mapsto (0,+\infty)$ are periodic in the second variable, that is, there exist  positive constants $l_g$ and $l_h$ such that $G(x,y+l_g)=G(x,y)$ and $H(x,y+l_h)=H(x,y)$ for all $(x,y) \in [0,1] \times \R$. 
We also suppose $G$ and $H$ are piecewise $C^1$ with respect to the first variable, it means, there exists a finite number of points $0=\xi_0<\xi_1<\cdots<\xi_{N-1}<\xi_N=1$ such that the functions $G$ and $H$ restricted to the set $(\xi_i,\xi_{i+1}) \times \R$ are $C^1$ with $G$, $H$, $G_x$, $H_x$, $G_y$ and $H_y$ uniformly bounded in $(\xi_i,\xi_{i+1}) \times \R$ having limits when we approach $\xi_i$ and $\xi_{i+1}$.

In this work we consider the highly oscillating thin domain $R^\epsilon$ which is defined in \eqref{TDG} as the open set bounded by the graphs of the functions $\epsilon G_\epsilon$ and $\epsilon H_\eps$.
Since we are taking $\alpha > 1$ to define $G_\epsilon$ in \eqref{def-G}, we are allowing the lower boundary of the thin domain $R^\epsilon$ to present a very high oscillatory behavior.
In fact, as $\epsilon \to 0$ we have that the period of the oscillations is much smaller (order $\sim\epsilon^\alpha$) than the amplitude (order $\sim\epsilon$), the height of the thin domain (order $\sim \epsilon$), and period of the oscillations of the upper boundary (order $\sim\epsilon$) given by function $H_\epsilon$.

A function satisfying the above conditions is  
$F(x,y)=a(x)+\sum_{r=1}^N b_r(x)g_r(y)$
where $a$, $b_1$,..,$b_N$ are piecewise $C^1$ with
$g_1$,..,$g_N$ also $C^1$ and $l$-periodic for some $l>0$.

In order to study the dynamics defined by \eqref{BPO} in $R^\epsilon$, we first study the solutions of the linear elliptic equation associated to the equivalent problem introduced by \eqref{RP}. We consider the following elliptic problem with homogeneous Neumann boundary condition
\begin{equation} \label{EP}
\left\{
\begin{gathered}
 - \frac{\partial^2 u^\epsilon}{{\partial x_1}^2} - 
\frac{1}{\epsilon^2} \frac{\partial^2 u^\epsilon}{{\partial x_2}^2} + u^\epsilon = f^\epsilon
\quad \textrm{ in } \Omega^\epsilon \\
\frac{\partial u^\epsilon}{\partial x_1} N_1^\epsilon + \frac{1}{\epsilon^2} \frac{\partial u^\epsilon}{\partial x_2}N_2^\epsilon = 0
\quad \textrm{ on } \partial \Omega^\epsilon 
\end{gathered}
\right.
\end{equation}
where $N^\eps=(N_1^\eps,N_2^\eps)$ is the outward unit normal to $\partial\Omega^\eps$, and $\Omega^\epsilon$ is the oscillating domain \eqref{domain}.
Moreover, we are taking $f^\eps\in L^2(\Omega^\eps)$ satisfying the uniform condition
\begin{equation} \label{FC}
\|f^\eps\|_{L^2(\Omega^\eps)}\leq C, \quad \forall \epsilon > 0,
\end{equation} 
for some $C>0$ independent of $\epsilon$. 
From Lax-Milgran Theorem, we have that  problem (\ref{EP}) has unique solution for each $\epsilon > 0$. 
We first analyze the behavior of these solutions as $\epsilon \to 0$, that is, as the domain gets thinner and thinner although with a high oscillating boundary.

Recall that the equivalence between the problems \eqref{BPO} and \eqref{RP} is established by changing the scale of the domain $R^\epsilon$ through the map $(x,y) \to (x, \epsilon y)$, see \cite{HR} for more details.
Also, the domain $\Omega^\epsilon$ is not thin anymore but presents very wild oscillations at the top and bottom boundary, although the presence of a high diffusion coefficient in front of the derivative with respect the second variable balance the effect of the wild oscillations.


It is known that the variational formulation of (\ref{EP}) is find $u^\epsilon \in H^1(\Omega^\epsilon)$ such that 
\begin{equation} \label{VFP}
\int_{\Omega^\epsilon} \Big\{ \frac{\partial u^\epsilon}{\partial x_1} \frac{\partial \varphi}{\partial x_1} 
+ \frac{1}{\epsilon^2} \frac{\partial u^\epsilon}{\partial x_2} \frac{\partial \varphi}{\partial x_2}
+ u^\epsilon \varphi \Big\} dx_1 dx_2 = \int_{\Omega^\epsilon} f^\epsilon \varphi dx_1 dx_2, 
\, \forall \varphi \in H^1(\Omega^\epsilon).
\end{equation}
Thus we get that the solutions $u^\epsilon$ satisfy an uniform a priori estimate on $\epsilon$.
Indeed, taking $\varphi = u^\epsilon$ in  expression (\ref{VFP}),  we obtain 
\begin{equation} \label{priori}
\begin{gathered}
\Big\| \frac{\partial u^\epsilon}{\partial x_1} \Big\|_{L^2(\Omega^\epsilon)}^2
+ \frac{1}{\epsilon^2}\Big\| \frac{\partial u^\epsilon}{\partial x_2} \Big\|_{L^2(\Omega^\epsilon)}^2
+ \| u^\epsilon \|_{L^2(\Omega^\epsilon)}^2
\le \| f^\epsilon \|_{L^2(\Omega^\epsilon)} \| u^\epsilon \|_{L^2(\Omega^\epsilon)}. 
\end{gathered}
\end{equation}
Consequently, it follows from (\ref{FC}) that 
\begin{equation} \label{EST0}
\begin{gathered}
\| u^\epsilon \|_{L^2(\Omega^\epsilon)}, \Big\| \frac{\partial u^\epsilon}{\partial x_1} \Big\|_{L^2(\Omega^\epsilon)}
\textrm{ and } \frac{1}{\epsilon} \Big\| \frac{\partial u^\epsilon}{\partial x_2} \Big\|_{L^2(\Omega^\epsilon)} 
\le C, \quad \forall \epsilon>0.
\end{gathered}
\end{equation}


Provided that we have to compare functions defined in $\Omega^\epsilon$ for $\epsilon > 0$, we need to introduce some extension operators $P_\epsilon$ in a convenient way. We note that this approach is very common in homogenization theory.   
For the current analysis we extend the functions only over the upper boundary of the domain $\Omega^\epsilon$, namely, into the open set $\widetilde \Omega^\epsilon$ defined by 
\begin{equation} \label{OmegaT}
\begin{array}{l}
\widetilde \Omega^\epsilon = \{ (x_1,x_2) \in \R^2 \; | \;  x_1 \in (0,1), \; - G_\epsilon(x_1) < x_2 < H_1 \} \setminus \\
\displaystyle \qquad\qquad \cup_{i=1}^N  \{ (\xi_i,x_2) \, | \,   \min \{ H_{0,i-1}, H_{0,i} \} < x_2 < H_1\},
\end{array} 
\end{equation}
where $ H_{0,i} = \min_{y \in \R} H(\xi_i, y)$, and the points $0=\xi_0<\xi_1<\ldots<\xi_{N-1}<\xi_N=1$ and the positive constant $H_1$ are given by hypothesis {\bf (H)}.

\begin{lemma} \label{EOT}
Under conditions described above, there exists an extension  operator 
$$
P_{\epsilon} \in \mathcal{L}(L^p(\Omega^\epsilon),L^p(\widetilde \Omega^\epsilon)) 
  \cap \mathcal{L}(W^{1,p}(\Omega^\epsilon),W^{1,p}(\widetilde \Omega^\epsilon))
$$
and a constant $K$ independent of $\epsilon$ and $p$ such that
\begin{equation} 
\begin{gathered} \label{EQOP}
\| P_{\epsilon} \varphi \|_{L^p(\widetilde \Omega^\epsilon)} \le K \, \| \varphi \|_{L^p(\Omega^\epsilon)}  \\
\Big\| \frac{\partial P_{\epsilon} \varphi}{\partial x_1} \Big\|_{L^p(\widetilde \Omega^\epsilon)} 
\le K \, \Big\{ \Big\| \frac{\partial \varphi}{\partial x_1} \Big\|_{L^p(\Omega^\epsilon)} 
+  \eta(\epsilon) \, \Big\| \frac{\partial \varphi}{\partial x_2} \Big\|_{L^p(\Omega^\epsilon)} \Big\} \\
\Big\| \frac{\partial P_{\epsilon} \varphi}{\partial x_2} \Big\|_{L^p(\widetilde \Omega^\epsilon)} 
\le K \, \Big\| \frac{\partial \varphi}{\partial x_2} \Big\|_{L^p(\Omega^\epsilon)} 
\end{gathered}
\end{equation}
for all $\varphi \in W^{1,p}(\Omega^\epsilon)$ where $1 \le p \le \infty$ and 
$
\eta(\epsilon) = \sup_{x \in (0,1)} \{ | H'_\epsilon(x) | \}, \quad \epsilon > 0.
$
\end{lemma}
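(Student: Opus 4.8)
The construction is local over the upper boundary: since $G_\epsilon$ stays uniformly away from $0$, the extension is only an issue near the graph $x_2 = H_\epsilon(x_1)$, and below any fixed height the function is already defined on all of $\widetilde\Omega^\epsilon$. The plan is to exploit the explicit form $H_\epsilon(x_1) = H(x_1, x_1/\epsilon)$, which on each interval $(\xi_i,\xi_{i+1})$ is a $C^1$ function of $x_1$ whose derivative is $H_x + \epsilon^{-1} H_y$; the only $\epsilon$-singular term is $\epsilon^{-1}H_y(x_1,x_1/\epsilon)$, which accounts for the factor $\eta(\epsilon) = \sup |H'_\epsilon|$ appearing in the gradient bound. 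First I would reduce, by a partition of unity subordinate to the $\xi_i$, to building the operator on one ``panel'' $(\xi_i,\xi_{i+1})\times\R$ intersected with $\Omega^\epsilon$, where $H$ is genuinely $C^1$ up to the boundary; the deleted slits $\{(\xi_i,x_2)\}$ in the definition \eqref{OmegaT} are precisely what allow these panels to be treated independently without matching across the jump points of $H$.

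\textbf{Key steps.} On a single panel I would use the standard reflection/extension trick for graph domains: straighten the upper boundary by the change of variables $(x_1,x_2) \mapsto (x_1,\, x_2 - H_\epsilon(x_1))$, which maps $\Omega^\epsilon\cap\big((\xi_i,\xi_{i+1})\times\R\big)$ onto a domain lying below the flat line $\{s_2=0\}$, extend by even reflection $s_2 \mapsto -s_2$ (or a higher-order Babich-type reflection if one wants $W^{1,\infty}$ with good constants), then undo the change of variables and finally multiply by a cutoff so that the extended function equals the original below a fixed height and vanishes above $x_2 = H_1$. The $L^p$ bound is immediate since reflection is an $L^p$-isometry up to the Jacobian, which is bounded because $H_0 \le H_\epsilon \le H_1$. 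For the derivative bounds one differentiates the composition: $\partial_{x_2}$ of the extension involves only $\partial_{s_2}$ of the reflected function, giving the clean bound $\|\partial_{x_2}P_\epsilon\varphi\| \le K\|\partial_{x_2}\varphi\|$ with no $\eta(\epsilon)$; whereas $\partial_{x_1}$ of the extension picks up $H_\epsilon'(x_1)\,\partial_{s_2}$, hence the term $\eta(\epsilon)\|\partial_{x_2}\varphi\|$. Crucially the constant $K$ depends only on $G_0,H_0,H_1$, the number and location of the $\xi_i$, and the $C^1$ norm of the straightening map \emph{excluding} the $\epsilon^{-1}$ term — which is exactly why $K$ can be taken independent of $\epsilon$ and of $p$ (reflection and cutoff constants do not see $p$).

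\textbf{Main obstacle.} The delicate point is bookkeeping the $\epsilon$-dependence so that it is confined to the single coefficient $\eta(\epsilon)$ and does not leak into $K$. Concretely, in the straightening map the Jacobian matrix is $\begin{pmatrix}1 & 0\\ -H_\epsilon' & 1\end{pmatrix}$, whose norm is $O(\eta(\epsilon))$ and hence blows up; one must check that this blow-up only ever multiplies $\partial_{x_2}$-derivatives (never $\partial_{x_1}$- or zeroth-order terms) in the final estimates, which is true because after reflection the ``bad'' direction is always the straightened vertical one. A secondary technical nuisance is the cutoff near the jump points $\xi_i$: because the slits are removed from $\widetilde\Omega^\epsilon$, one does not need the extensions on adjacent panels to agree, but one must verify that the panelwise-defined operator, glued by the partition of unity, still lands in $W^{1,p}(\widetilde\Omega^\epsilon)$ — i.e. that no spurious distributional derivative is created along $\{x_1=\xi_i\}$ — and this is where the precise form of the deleted set in \eqref{OmegaT}, removing everything above $\min\{H_{0,i-1},H_{0,i}\}$, is used. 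I expect the rest (the three displayed inequalities \eqref{EQOP}) to follow by routine estimates once the straightening-and-reflection construction is in place.
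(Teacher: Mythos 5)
Your proposal is correct and follows the same route the paper (via its citations to \cite{ACPS, AP2}) has in mind: a reflection/extension across the oscillating upper graph $x_2 = H_\epsilon(x_1)$, done panel by panel on $(\xi_{i-1},\xi_i)$, with the $\eta(\epsilon)$ factor arising exactly as you describe from the off-diagonal entry $-H_\epsilon'(x_1)$ in the Jacobian of the straightening map, and the slits removed in \eqref{OmegaT} obviating any matching condition across the $\xi_i$. The only remark worth making is that the partition of unity is unnecessary: below the slits the extension coincides with $\varphi$ itself and is automatically $W^{1,p}$ across $\xi_i$, and above the slits the panels are disconnected in $\widetilde\Omega^\epsilon$, so the panelwise extension already glues for free; also take care that the reflection is scaled (e.g.\ $x_2 \mapsto H_\epsilon(x_1)-\lambda(x_2-H_\epsilon(x_1))$ with $\lambda$ small depending only on $G_0,H_0,H_1$) so the reflected point stays inside $\Omega^\epsilon$, which keeps $K$ independent of $\epsilon$ and $p$ as required.
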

\begin{proof}
This result can be obtained using a reflection procedure over the upper oscillating boundary of $\Omega^\epsilon$. See \cite{ACPS, AP2} for details.

\end{proof}

\begin{remark} \label{rem:stoperator}
\begin{itemize}
\item[(i)] Note that operator $P_\epsilon$ preserves periodicity in the $x_1$ variable. Indeed, under this reflection procedure, we have that if the function $\varphi$ is periodic in $x_1$, then the extended function $P_\epsilon \varphi$ is also periodic in $x_1$. 
\item[(ii)] Lemma \ref{EOT} can also be applied to the case $G_\epsilon$ and $H_\epsilon$ independent of $\epsilon$. In particular, we still can apply this extension operator to the representative cell $Y^*$. 
\end{itemize}
\end{remark}

\begin{remark} \label{rem:eqnorm }
If for each $w \in W^{1,p}(\mathcal{O})$ we denote by  $||| \cdot |||$ the norm
$$
||| w |||_{W^{1,p}(\mathcal{O})} = 
\| w \|_{L^p(\mathcal{O})} + 
 \Big\| \frac{\partial w}{\partial x_1} \Big\|_{L^p(\mathcal{O})}
+ \left( 1+\eta(\epsilon) \right)\Big\| \frac{\partial w}{\partial x_2} \Big\|_{L^p(\mathcal{O})},
$$  
then we have that the extension operator $P_\epsilon$ must satisfy 
$
||| P_\epsilon w |||_{W^{1,p}(\widetilde \Omega^\epsilon)} \le K ||| w |||_{W^{1,p}(\Omega^\epsilon)}
$
where $K>0$ is independent of $\epsilon$. The norm $||| \cdot |||_{W^{1,p}}$ is equivalent to the usual one.
\end{remark}


Now let us to discuss how the solutions of \eqref{EP} depend on the domain $\Omega^\epsilon$ and more exactly on the functions $G_\epsilon$ and $H_\epsilon$.  As a matter of fact, we have a continuous dependence result in $L^\infty$ uniformly in $\epsilon$. 
Assume $G_\epsilon$, $\widehat G_\epsilon$, $H_\epsilon$ and $\widehat H_\epsilon$ are piecewise continuous functions satisfying hypothesis {\bf (H)}, 
and consider the associated oscillating domains $\Omega^\epsilon$ and $\widehat \Omega^\epsilon$ given by
$$
\begin{gathered} 
\Omega^\epsilon = \{ (x_1,x_2) \in \R^2 \; | \;  x_1 \in (0,1), \quad -G_\epsilon(x_1) < x_2 < H_\epsilon(x_1) \}, \\
\widehat \Omega^\epsilon = \{ (x_1,x_2) \in \R^2 \; | \;  x_1 \in (0,1), \quad -\widehat G_\epsilon(x_1) < x_2 < \widehat H_\epsilon(x_1) \}.
\end{gathered}
$$
Let $u^\epsilon$ and $\widehat u^\epsilon$ be the solutions of the problem (\ref{EP}) in the oscillating domains $\Omega^\epsilon$ and $\widehat \Omega^\epsilon$ respectively with $f^\epsilon \in L^2(\R^2)$. Then we have the following result:

\begin{proposition}  \label{BPT}
There exists a positive real function $\rho:[0,\infty) \mapsto [0,\infty)$ such that
$$
\|u^\epsilon-\widehat u^\epsilon\|^2_{H^1_\epsilon(\Omega^\epsilon \cap \widehat \Omega^\epsilon)} 
+ \|u^\epsilon\|^2_{H^1_\epsilon(\Omega^\epsilon \setminus \widehat \Omega^\epsilon)}
+ \|\widehat u^\epsilon\|^2_{H^1_\epsilon(\widehat\Omega^\epsilon \setminus \Omega^\epsilon)} \leq \rho(\delta)
$$
with $\rho(\delta)\to 0$ as $\delta\to 0$ uniformly for all 
\begin{enumerate}
\item[i)] $\epsilon > 0$;
\item[ii)] piecewise $C^1$ functions $G_\epsilon$, $\widehat G_\epsilon$, $H_\epsilon$ and $\widehat H_\epsilon$ with 
$$
\begin{gathered}
0\le G_0 \le G_\epsilon(x), \widehat G_\epsilon(x) \le G_1, \quad 
0< H_0 \le H_\epsilon(x), \widehat H_\epsilon(x) \le H_1, \\
\|G_\epsilon-\widehat G_\epsilon\|_{L^\infty(0,1)} \leq \delta \quad \textrm{ and } \quad 
\|H_\epsilon-\widehat H_\epsilon\|_{L^\infty(0,1)} \leq \delta;
\end{gathered}
$$
\item[iii)] $f^\epsilon\in L^2(\R^2)$, $\|f^\epsilon\|_{L^2(\R^2)}\leq 1$.
\end{enumerate}
\end{proposition}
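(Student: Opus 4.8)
The plan is to collapse the three–term left–hand side into an energy identity supported only on the thin ``mismatch'' strips and on an interface, and then to estimate those pieces. Write $D=\Omega^\epsilon\cap\widehat\Omega^\epsilon$, $\Sigma=\Omega^\epsilon\setminus\widehat\Omega^\epsilon$, $\widehat\Sigma=\widehat\Omega^\epsilon\setminus\Omega^\epsilon$. Because all four profiles lie between the common constants and differ by at most $\delta$ in $L^\infty$, $D$ is again an open region between two graphs (floor $x_2=-\min\{G_\epsilon,\widehat G_\epsilon\}$, ceiling $x_2=\min\{H_\epsilon,\widehat H_\epsilon\}$), while $\Sigma$ and $\widehat\Sigma$ are each the union of an upper and a lower strip whose thickness in the $x_2$–direction is everywhere $\le\delta$. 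Applying the a priori argument behind \eqref{EST0} to $u^\epsilon$ and $\widehat u^\epsilon$ with $\|f^\epsilon\|_{L^2(\R^2)}\le1$ produces a constant $C_0$, independent of $\epsilon$ and of the admissible data, with $\|u^\epsilon\|_{H^1_\epsilon(\Omega^\epsilon)},\ \|\widehat u^\epsilon\|_{H^1_\epsilon(\widehat\Omega^\epsilon)}\le C_0$; in particular $\|\partial_{x_2}u^\epsilon\|_{L^2(\Omega^\epsilon)}$ and $\|\partial_{x_2}\widehat u^\epsilon\|_{L^2(\widehat\Omega^\epsilon)}$ are $O(\epsilon)$.

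Testing \eqref{VFP} for $u^\epsilon$ with $u^\epsilon$ gives $\|u^\epsilon\|^2_{H^1_\epsilon(\Omega^\epsilon)}=\int_{\Omega^\epsilon}f^\epsilon u^\epsilon$ (and likewise for $\widehat u^\epsilon$), and expanding $\|u^\epsilon-\widehat u^\epsilon\|^2_{H^1_\epsilon(D)}$ by the parallelogram law leaves the cross term $a_D(u^\epsilon,\widehat u^\epsilon)$, where $a_D$ is the bilinear form of \eqref{VFP} over $D$. Since $\partial D\subset\partial\Omega^\epsilon\cup\partial\widehat\Omega^\epsilon$, the homogeneous conormal condition makes the $\epsilon$–conormal derivative $\partial_{\nu^D_\epsilon}v:=\partial_{x_1}v\,N_1+\epsilon^{-2}\partial_{x_2}v\,N_2$ of $u^\epsilon$ vanish a.e.\ on $A:=\partial D\cap\partial\Omega^\epsilon$ and that of $\widehat u^\epsilon$ vanish a.e.\ on $B:=\partial D\cap\partial\widehat\Omega^\epsilon$; integrating $a_D$ by parts (legitimate since $\Delta_\epsilon u^\epsilon=u^\epsilon-f^\epsilon\in L^2$ on $\Omega^\epsilon\supset D$ with $\Delta_\epsilon=\partial_{x_1}^2+\epsilon^{-2}\partial_{x_2}^2$, and similarly for $\widehat u^\epsilon$) in the two possible ways gives $a_D(u^\epsilon,\widehat u^\epsilon)=\int_Df^\epsilon\widehat u^\epsilon+\int_B\partial_{\nu^D_\epsilon}u^\epsilon\,\widehat u^\epsilon=\int_Df^\epsilon u^\epsilon+\int_A\partial_{\nu^D_\epsilon}\widehat u^\epsilon\,u^\epsilon$. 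Averaging these, and observing that the two integrals over $D$ cancel against the corresponding parts of $\int_{\Omega^\epsilon}f^\epsilon u^\epsilon$ and $\int_{\widehat\Omega^\epsilon}f^\epsilon\widehat u^\epsilon$, one arrives at
\[ \|u^\epsilon-\widehat u^\epsilon\|^2_{H^1_\epsilon(D)}+\|u^\epsilon\|^2_{H^1_\epsilon(\Sigma)}+\|\widehat u^\epsilon\|^2_{H^1_\epsilon(\widehat\Sigma)} =\int_\Sigma f^\epsilon u^\epsilon+\int_{\widehat\Sigma}f^\epsilon\widehat u^\epsilon-\int_A\partial_{\nu^D_\epsilon}\widehat u^\epsilon\,u^\epsilon-\int_B\partial_{\nu^D_\epsilon}u^\epsilon\,\widehat u^\epsilon , \]
so everything has been moved onto $\Sigma\cup\widehat\Sigma$ and onto the interfaces $A$, $B$.

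The two volume terms give the harmless part of $\rho(\delta)$: since $\Sigma,\widehat\Sigma$ are unions of strips of $x_2$–thickness $\le\delta$ and the $x_2$–sections of $D$ have length $\ge H_0+G_0>0$, a one–dimensional Poincaré inequality in $x_2$ together with the a priori $L^2$– and $\partial_{x_2}$–bounds gives $\|u^\epsilon\|_{L^2(\Sigma)}\le C\sqrt\delta$ and $\|\widehat u^\epsilon\|_{L^2(\widehat\Sigma)}\le C\sqrt\delta$, hence $|\int_\Sigma f^\epsilon u^\epsilon|+|\int_{\widehat\Sigma}f^\epsilon\widehat u^\epsilon|\le C\sqrt\delta\,\|f^\epsilon\|_{L^2(\R^2)}\le C\sqrt\delta$, uniformly in all the data. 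The main obstacle is the two interface integrals. Here $A$ (resp.\ $B$) lies on $\partial\Omega^\epsilon$ (resp.\ $\partial\widehat\Omega^\epsilon$) but strictly inside the other domain, at $x_2$–distance $\le\delta$ from its boundary, and on $A$ it is $u^\epsilon$, not $\widehat u^\epsilon$, that is conormal–free, so these pairings do not vanish; moreover a crude Cauchy–Schwarz in $H^{\pm1/2}$ is useless uniformly in $\epsilon$ because near the wildly oscillating lower part the boundary slopes are of order $\epsilon^{-\alpha}$. The way out is to use that the $1/\epsilon^2$ diffusion forces $u^\epsilon,\widehat u^\epsilon$ to be almost independent of $x_2$ (their $\partial_{x_2}$–norms being $O(\epsilon)$), so that on $A$ each solution agrees — up to an $L^2$–error controlled by $\sqrt\delta$ and by the oscillation structure — with its trace on the neighbouring true boundary, where it does satisfy the conormal condition; over the tame upper boundary one then transports the interface integral by means of the reflection extension of Lemma \ref{EOT}, which remains bounded uniformly in $\epsilon$ in the weighted norm $|||\cdot|||$ of Remark \ref{rem:eqnorm } precisely because there $\eta(\epsilon)\le C\epsilon^{-1}$ is absorbed by the factor $1+\eta(\epsilon)$ multiplying $\|\partial_{x_2}\cdot\|$, whereas over the lower boundary one cannot reflect and must instead extend constantly in $x_2$ below the floor of $D$, absorbing the spurious $\partial_{x_1}$–contribution with the help of $G_0>0$ and the $x_2$–flatness of the solution. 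Assembling these estimates yields a single nondecreasing modulus $\rho$ with $\rho(\delta)\to0$, manifestly independent of $\epsilon$, of the admissible profiles, and of $f^\epsilon$. (An alternative, softer route is a contradiction–compactness argument: negating the claim produces sequences $\epsilon_n,\ G_n,\widehat G_n,\ H_n,\widehat H_n,\ f_n$ with the left–hand side bounded below, and along a subsequence either $\epsilon_n\to\epsilon_\ast>0$ — in which case the profiles are uniformly Lipschitz on each piece, converge in $C^0$, and the classical continuous dependence of the Neumann problem on the domain gives a contradiction — or $\epsilon_n\to0$, where the $O(\epsilon)$ control of $\partial_{x_2}$ makes the relevant limits $x_2$–independent; identifying these limits so as to reach a contradiction is the delicate point, and in any case this second regime is exactly where the direct argument above has to do its work.)
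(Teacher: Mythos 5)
Your energy identity
\[
\|u^\epsilon-\widehat u^\epsilon\|^2_{H^1_\epsilon(D)}+\|u^\epsilon\|^2_{H^1_\epsilon(\Sigma)}+\|\widehat u^\epsilon\|^2_{H^1_\epsilon(\widehat\Sigma)}
=\int_\Sigma f^\epsilon u^\epsilon+\int_{\widehat\Sigma}f^\epsilon\widehat u^\epsilon
-\int_A\partial_{\nu^D_\epsilon}\widehat u^\epsilon\,u^\epsilon
-\int_B\partial_{\nu^D_\epsilon}u^\epsilon\,\widehat u^\epsilon
\]
is a correct algebraic reduction (the cancellation of the $D$--integrals against the global energy identities and the vanishing of the conormal derivative of $u^\epsilon$ on $A$, of $\widehat u^\epsilon$ on $B$, all check out), and the bound $\|u^\epsilon\|_{L^2(\Sigma)}+\|\widehat u^\epsilon\|_{L^2(\widehat\Sigma)}\le C\sqrt{\delta}$ for the two volume terms is fine. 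But the two interface integrals are exactly where the difficulty of the proposition lives, and your treatment of them is not an argument --- it is a description of what an argument would have to accomplish. Concretely: on the part of $A$ that comes from the lower boundary $\{x_2=-G_\epsilon(x_1)\}$, the arc length of $A$ grows like $\epsilon^{-\alpha}$ because $|G_\epsilon'|\sim\epsilon^{-\alpha}$, and the outward normal there is nearly horizontal, so the $\epsilon$--conormal derivative of $\widehat u^\epsilon$ along $A$ is essentially $\pm\partial_{x_1}\widehat u^\epsilon$, a quantity for which you have no pointwise or $L^2(A)$ control; the only thing the equation gives you is an $H^{-1/2}(\partial D)$ trace, and pairing that against $u^\epsilon|_A$ does not produce a factor that tends to zero with $\delta$ uniformly in $\epsilon$. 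The remark that $\partial_{x_2}u^\epsilon=O(\epsilon)$ in $L^2$ helps with $x_2$--flatness of the \emph{function}, not of its conormal \emph{derivative}, and the sentence about ``transporting the interface integral by the reflection extension'' on the top and ``extending constantly below the floor'' on the bottom is not carried out and would need to explain, at a minimum, how the $\epsilon^{-\alpha}$--long interface and the $\epsilon^{-2}$--weighted normal derivative are both tamed uniformly in $\epsilon$.

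So there is a genuine gap: the step from your energy identity to a $\rho(\delta)\to0$ bound is missing, and the two routes you sketch to close it (reflection transport on the upper boundary, constant vertical extension on the lower boundary, or alternatively a compactness/contradiction argument) are left at the level of intentions. The paper itself does not reprove this statement; it cites \cite[Theorem 4.1]{AP2}, whose argument is specifically engineered for this doubly--oscillating geometry. If you want to pursue your route, the cleanest way to kill the interface terms is probably to avoid producing them at all: instead of integrating $a_D(u^\epsilon,\widehat u^\epsilon)$ by parts, test the weak formulation for $u^\epsilon$ against a genuine $H^1(\Omega^\epsilon)$ extension of $\widehat u^\epsilon$ (and vice versa) and estimate the extension cost over the thin strips $\Sigma,\widehat\Sigma$; that replaces the uncontrolled conormal traces by $H^1_\epsilon$ norms over regions of measure $O(\delta)$, which you already know how to bound.
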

\begin{proof}
The proof is quite analogous to that one performed in \cite[Theorem 4.1]{AP2} since we are taking functions $G$ and $H$ satisfying ${\bf (H)}$ with constant period $l_g$ and $l_h$ respectively. 
\end{proof}

\begin{remark} 
The important part of this result is that the positive function $\rho(\delta)$ does not depend on $\epsilon$. It only depends on the nonnegative constants $G_0$, $G_1$, $H_0$ and $H_1$.
\end{remark}


Finally, we mention some important estimates on the solutions of an elliptic problem posed in rectangles of the type  
$$
Q_\epsilon=\{ (x,y) \in \R^2 \; | \;  -\epsilon^\alpha<x<\epsilon^\alpha, \, 0<y<1\}
$$ 
with $\alpha>1$.  
For each $u_0 \in H^1(-\epsilon^\alpha,\epsilon^\alpha)$, let us define $u^\epsilon(x,y)$ as the unique solution of
\begin{equation} \label{P-basic}
\left\{
\begin{gathered}
 - \frac{\partial^2 u^\epsilon}{{\partial x}^2} - 
\frac{1}{\epsilon^2} \frac{\partial^2 u^\epsilon}{{\partial y}^2}= 0
\quad \textrm{ in } Q_\epsilon, \\
\qquad u(x,0)=u_0(x),\quad  \textrm{ on } \Gamma_\epsilon,\\
\frac{\partial u}{\partial \nu}=0,\quad  \textrm{ on } \partial Q_\epsilon \setminus \Gamma_\epsilon
\end{gathered}
\right.
\end{equation}
where $\nu$ is the outward unit normal to $\partial Q_\epsilon$ and 
$
\Gamma_\epsilon = \{ (x,0) \in \R^2 \, | \, -\epsilon^\alpha<x<\epsilon^\alpha \}.
$

\begin{lemma}
With the notations above, if we denote by $\bar u_0$ the average of $u_0$ in $\Gamma_\epsilon$, that is 
$$
\bar u_0=\frac{1}{2\epsilon^\alpha}\int_{-\epsilon^\alpha}^{\epsilon^\alpha}u_0(x) \, dx,
$$
then, there exists a constant $C$,  independent of $\epsilon$ and $u_0$, such that 
$$
\int_{-\epsilon^\alpha}^{\epsilon^\alpha}|u^\epsilon(x,y)-\bar u_0|^2 \, dx \leq C \exp\left\{-\frac{2y\pi}{\epsilon^{\alpha-1}}\right\} \|u_0\|_{L^2(-\epsilon^\alpha,\epsilon^\alpha)}^2
$$
$$
\int_0^1\int_{-\epsilon^\alpha}^{\epsilon^\alpha}|u(x,y)-\bar u_0|^2 \, dxdy \leq C\epsilon^{\alpha-1} \|u_0\|_{L^2(-\epsilon^\alpha,\epsilon^\alpha)}^2
$$
\end{lemma}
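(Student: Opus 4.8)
The plan is to solve the boundary value problem \eqref{P-basic} explicitly by separation of variables, which is feasible here because $Q_\epsilon$ is a rectangle and the boundary conditions on the three sides other than $\Gamma_\epsilon$ are homogeneous Neumann. First I would rescale the horizontal variable by writing $x=\epsilon^\alpha t$ with $t\in(-1,1)$, so that, after absorbing the factor $1/\epsilon^2$ produced by the rescaling against the $1/\epsilon^2$ already in front of $\partial_y^2$, the equation becomes $-\epsilon^{-2\alpha}\partial_t^2 u^\epsilon-\epsilon^{-2}\partial_y^2 u^\epsilon=0$, i.e. $\partial_t^2 u^\epsilon+\epsilon^{2\alpha-2}\partial_y^2 u^\epsilon=0$. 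Expanding $u_0$ in the cosine basis $\{\cos(k\pi (t+1)/2)\}_{k\ge 0}$ on $(-1,1)$ — adapted to the homogeneous Neumann conditions at $t=\pm1$ — writes $u_0=\sum_{k\ge0} a_k\phi_k$ with $a_0=\bar u_0$, and the solution is $u^\epsilon=\sum_{k\ge0} a_k\phi_k(t)\,\psi_k(y)$ where each $\psi_k$ solves the ODE $\epsilon^{2\alpha-2}\psi_k''=(k\pi/2)^2\psi_k$ with $\psi_k(0)=1$ and $\psi_k'(1)=0$ (Neumann on $y=1$). For $k\ge1$ this gives $\psi_k(y)=\cosh\!\big(\tfrac{k\pi}{2\epsilon^{\alpha-1}}(1-y)\big)/\cosh\!\big(\tfrac{k\pi}{2\epsilon^{\alpha-1}}\big)$, while $\psi_0\equiv1$.

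With this representation the two estimates are Parseval computations. For the first one, by orthonormality of the $\phi_k$,
$$
\int_{-1}^1|u^\epsilon(t,y)-\bar u_0|^2\,dt=\sum_{k\ge1}a_k^2\,\psi_k(y)^2,
$$
and since $\psi_k(y)^2\le \big(e^{-\frac{k\pi}{2\epsilon^{\alpha-1}}y}\cdot\frac{2}{1}\big)^2\le 4\,e^{-\frac{k\pi}{\epsilon^{\alpha-1}}y}\le 4\,e^{-\frac{\pi}{\epsilon^{\alpha-1}}y}$ for $k\ge1$ (using $\cosh a\ge\tfrac12 e^{a}$ and $\cosh a(1-y)\le e^{a(1-y)}$, so the ratio is $\le 2e^{-ay}$, monotone decreasing in $k$), one gets $\int_{-1}^1|u^\epsilon-\bar u_0|^2\,dt\le 4e^{-\pi y/\epsilon^{\alpha-1}}\sum_{k\ge1}a_k^2\le 4e^{-\pi y/\epsilon^{\alpha-1}}\|u_0\|^2_{L^2(-1,1)}$. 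Undoing the scaling (each $L^2$ norm in $x$ picks up the same factor $\epsilon^\alpha$ on both sides, so it cancels) yields exactly the claimed bound with an extra harmless constant, after noting that $e^{-\pi y/\epsilon^{\alpha-1}}\le e^{-2\pi y/\epsilon^{\alpha-1}}\cdot e^{\pi y/\epsilon^{\alpha-1}}$ is not quite what is wanted — so in fact I would be slightly more careful and use the sharper $\psi_k(y)^2\le 4e^{-2ay}$ style bound; here $a=k\pi/(2\epsilon^{\alpha-1})\ge \pi/(2\epsilon^{\alpha-1})$ gives $\psi_k(y)^2\le 4e^{-2y\pi/\epsilon^{\alpha-1}}\cdot e^{y\pi/\epsilon^{\alpha-1}}$, which still is not the stated constant, so the cleanest route is to keep $\psi_k(y)\le 2e^{-k\pi y/(2\epsilon^{\alpha-1})}$ and bound $\psi_k(y)^2\le 4e^{-k\pi y/\epsilon^{\alpha-1}}\le 4e^{-y\pi/\epsilon^{\alpha-1}}$, then absorb the discrepancy with $y\pi$ versus $2y\pi$ into $C$ by restricting to, say, $y$ bounded — but since $y\in(0,1)$ and we only need the stated inequality for some constant $C$, one can also simply note $e^{-y\pi/\epsilon^{\alpha-1}}\le C e^{-2y\pi/\epsilon^{\alpha-1}}$ fails; the honest fix is that $\cosh(a)\ge \tfrac12 e^{a}$ gives $\psi_k(y)\le 2 e^{a(1-y)-a}=2e^{-ay}$ with $a\ge \pi/(2\epsilon^{\alpha-1})$, whence $\psi_k(y)^2\le 4 e^{-2ay}$ and $2ay\ge y\pi/\epsilon^{\alpha-1}$; to reach $2y\pi/\epsilon^{\alpha-1}$ one uses instead the full exponent $k\pi$ in place of the minimal $\pi$, i.e. $\psi_k(y)^2\le 4e^{-k\pi y/\epsilon^{\alpha-1}}$ and $\sum_{k\ge1}4e^{-k\pi y/\epsilon^{\alpha-1}}a_k^2\le 4e^{-\pi y/\epsilon^{\alpha-1}}\|u_0\|^2$, which is the stated estimate up to replacing $2\pi$ by $\pi$ — a discrepancy I would resolve by adjusting the constant $C$ and noting both forms are equivalent for the subsequent applications.

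For the second estimate I would integrate the first in $y$ over $(0,1)$:
$$
\int_0^1\!\!\int_{-\epsilon^\alpha}^{\epsilon^\alpha}|u^\epsilon-\bar u_0|^2\,dx\,dy
\le C\|u_0\|^2_{L^2(-\epsilon^\alpha,\epsilon^\alpha)}\int_0^1 e^{-\frac{y\pi}{\epsilon^{\alpha-1}}}\,dy
= C\|u_0\|^2_{L^2}\cdot\frac{\epsilon^{\alpha-1}}{\pi}\big(1-e^{-\pi/\epsilon^{\alpha-1}}\big)
\le C\epsilon^{\alpha-1}\|u_0\|^2_{L^2(-\epsilon^\alpha,\epsilon^\alpha)},
$$
which is precisely the claim (note $u=u^\epsilon$ in the statement, and again the $\epsilon^\alpha$ scaling factors on the two sides match). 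The only genuine subtlety — and the step I expect to need the most care — is making the uniform-in-$\epsilon$ exponential decay of $\psi_k$ rigorous: one must check that $\cosh$ of the large argument $k\pi/(2\epsilon^{\alpha-1})$ in the denominator really does dominate, uniformly over $k\ge1$ and over small $\epsilon$, and that the series manipulations are legitimate (which follows since $u_0\in H^1\subset L^2$ makes $\sum a_k^2<\infty$, and the $\psi_k$ are uniformly bounded by $1$ on $[0,1]$ so the series for $u^\epsilon$ converges in $L^2(Q_\epsilon)$). Everything else is bookkeeping of the change of variables and elementary integral bounds.
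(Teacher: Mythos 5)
Your proposal follows essentially the same route as the paper: write the explicit solution of \eqref{P-basic} by separating variables, expand $u_0$ in the Neumann eigenbasis of $(-\epsilon^\alpha,\epsilon^\alpha)$, solve the resulting $y$--ODEs to get $\cosh$ ratios, and then use Parseval together with the elementary bound $\cosh(a(1-y))/\cosh(a)\le 2e^{-ay}$, followed by integration in $y$ for the second display. So the core idea and all the key computations coincide with the paper's proof.

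The $\pi$ versus $2\pi$ discrepancy that you honestly flag is not a defect of your reasoning but a mismatch traceable to the choice of basis. The paper expands in $\varphi_k^\epsilon(x)=\epsilon^{-\alpha/2}\cos(k\pi x/\epsilon^\alpha)$, whose first nonconstant mode has wavenumber $\pi/\epsilon^\alpha$; the resulting $\cosh$ arguments are $k\pi/\epsilon^{\alpha-1}$, and squaring the $2e^{-k\pi y/\epsilon^{\alpha-1}}$ bound gives the stated $e^{-2\pi y/\epsilon^{\alpha-1}}$. You instead used the \emph{full} Neumann basis $\cos\bigl(k\pi(t+1)/2\bigr)$ on $(-1,1)$, whose first nonconstant wavenumber is half as large, whence the $\pi$ in the exponent. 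The two are not reconciled by adjusting $C$ (as you correctly note in passing), but they are genuinely incompatible only for the lemma's \emph{first} display; the paper's cosine family $\{\cos(k\pi x/\epsilon^\alpha)\}_{k\ge0}$ spans only the even functions on $(-\epsilon^\alpha,\epsilon^\alpha)$, so for general $u_0\in H^1(-\epsilon^\alpha,\epsilon^\alpha)$ your version with exponent $\pi$ is in fact the correct one. As you observe, the precise numerical constant in the exponent is irrelevant for the second display and for estimate \eqref{basic-estimate}, which is all that is used later. Your proof is therefore sound and, if anything, slightly more careful than the paper's on this point; the rambling paragraph in the middle could simply be replaced by the clean bound $\psi_k(y)^2\le 4e^{-k\pi y/\epsilon^{\alpha-1}}\le 4e^{-\pi y/\epsilon^{\alpha-1}}$ for $k\ge1$.
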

and 
\begin{equation}\label{basic-estimate}
\left\|\frac{\partial u}{\partial x}\right\|_{L^2(Q_\epsilon)}^2+\frac{1}{\epsilon^2}\left\|\frac{\partial u}{\partial y}\right\|^2_{L^2(Q_\epsilon)}\leq C \epsilon^{\alpha -1}
\left\|\frac{\partial u_0}{\partial x}\right\|_{L^2(-\epsilon^\alpha,\epsilon^\alpha)}^2.
\end{equation}
\begin{proof}
The proof follows from the known fact that the solution of the problem \eqref{P-basic} can be found explicitly and admits a Fourier decomposition of the form 
$$
u^\epsilon(x,y) = \frac{1}{2\epsilon^\alpha}\int_{-\epsilon^\alpha}^{\epsilon^\alpha} u_0(\tau) d\tau+ \sum_{  k=1  }^\infty (u_0,\varphi_k^\epsilon)\varphi_k^\epsilon(x) \frac{\cosh(\frac{k\pi(1-y)}{\epsilon^{\alpha-1}})}{\cosh(\frac{k\pi}{\epsilon^{\alpha-1}})}
$$
where $\varphi_k^\epsilon(x)=\epsilon^{-\alpha/2}\cos(\frac{k\pi x}{\epsilon^\alpha}),$ $k=1,2, \ldots,$ and  $(u_0,\varphi_k^\epsilon)=(u_0,\varphi_k^\epsilon)_{L^2(-\epsilon^\alpha, \epsilon^\alpha)}$.
\end{proof}

\section{The piecewise periodic case} \label{PWPC}

In this section we establish the limit of sequence $\{ u^\epsilon \}_{\epsilon > 0}$ given by
the elliptic problem \eqref{EP} as $\epsilon$ goes to zero for the case where the oscillating boundary of $\Omega^\epsilon$ is defined assuming that $G_\epsilon$ and $H_\epsilon$ are piecewise periodic functions. 

More precisely, we suppose the functions $G$ and $H$ satisfy hypothesis {\bf (H)}, assuming also they are independent functions of the first variable in each of the open sets $(\xi_{i-1},\xi_i)\times \R$. 
Thus, if $0=\xi_0<\xi_1<\ldots<\xi_{N-1}<\xi_{N}=1$ so that functions $G$ and $H$ satisfy  
\begin{equation} \label{GHH}
G(x, y)=G_i(y) \quad \textrm{ and } \quad H(x,y) = H_i(y), \quad \textrm{ for } x \in (\xi_{i-1},\xi_i),
\end{equation}
with $G_i(y+l_g)=G_i(y)$ and $H_i(y+l_h)=H_i(y)$ for all $y\in \R$. 
The functions $G_i$ and $H_i$ are $C^1$-functions satisfying
$0<G_0\leq G_i(\cdot)\leq G_1$ and $0<H_0\leq H_i(\cdot)\leq H_1$ for all $i=1,\ldots, N$, and then, the oscillating domain $\Omega^\epsilon$ is now 
$$
\begin{array}{l}
\Omega^\epsilon=\left\{ (x,y) \, | \,  \xi_{i-1}<x<\xi_i, -G_i(x/\epsilon)<y<H_i(x/\epsilon), i=1,\ldots, N\right\} \cup \\ 
\displaystyle \qquad \cup_{i=1}^{N-1}\left\{(\xi_i, y) \, | \, -\min\{G_{i}(\xi_i/\epsilon), G_{i+1}(\xi_i/\epsilon)\} < y < \min\{H_{i}(\xi_i/\epsilon), H_{i+1}(\xi_i/\epsilon)\}\right\}, 
\end{array}
$$
as illustrated by Figure \ref{figura2}. Also region $\widetilde \Omega^\epsilon$, previously introduced in \eqref{OmegaT}, is given by 
$$
\begin{array}{l}
\widetilde \Omega^\epsilon = \left\{ (x,y) \, | \,  \xi_{i-1}<x<\xi_i, -G_i(x/\epsilon)<y<H_1, i=1,\ldots, N \right\} \cup \\ 
\displaystyle \qquad \cup_{i=1}^{N-1} \left\{ (\xi_i, y) \, | \,  -\min\{G_{i}(\xi_i/\epsilon), G_{i+1}(\xi_i/\epsilon)\} < y <  \min \{ H_{0,i}, H_{0,i+1} \}  \right\},
\end{array}
$$
with $H_{0,i} = \min_{y \in \R} H_i(y)$, $i=1, \ldots, N$. 

We also denote by $\Omega_0$ the convenient open set without oscillating boundaries given by 
\begin{equation} \label{Omega0}
\begin{array}{l}
\Omega_0 =  \left\{ (x,y) \, | \,  \xi_{i-1}<x<\xi_i, -G_{0,i} <y<H_1, i=1,\ldots, N \right\} \cup \\ 
\displaystyle \qquad \cup_{i=1}^{N-1} \left\{(\xi_i, y) \, | \, -\min\{ G_{0,i}, G_{0,i+1} \} < y < \min \{ H_{0,i}, H_{0,i+1} \}  \right\},
\end{array}
\end{equation}
where the positive constant $G_{0,i}$, with $i=1,\ldots,N$, is set by  $G_{0,i} = \min_{y \in \R} G_i(y)$ whenever $x \in (\xi_{i-1},\xi_i)$. Here, we are establishing the following step function 
\begin{equation} \label{G0}
G_0(x) = G_{0,i} = \min_{y \in \R} G_i(y), \quad \textrm{ if } x \in (\xi_{i-1},\xi_i).
\end{equation}
Notice $\Omega_0 \subset \widetilde \Omega^\epsilon$ for all $\epsilon > 0$.

It is also important to observe that we still have the extension operator $P_\epsilon$ constructed in Lemma \ref{EOT} for the open regions $\Omega^\epsilon$ into $\widetilde \Omega^\epsilon$.

\begin{figure}[htp]
\centering \scalebox{1.10}{\includegraphics{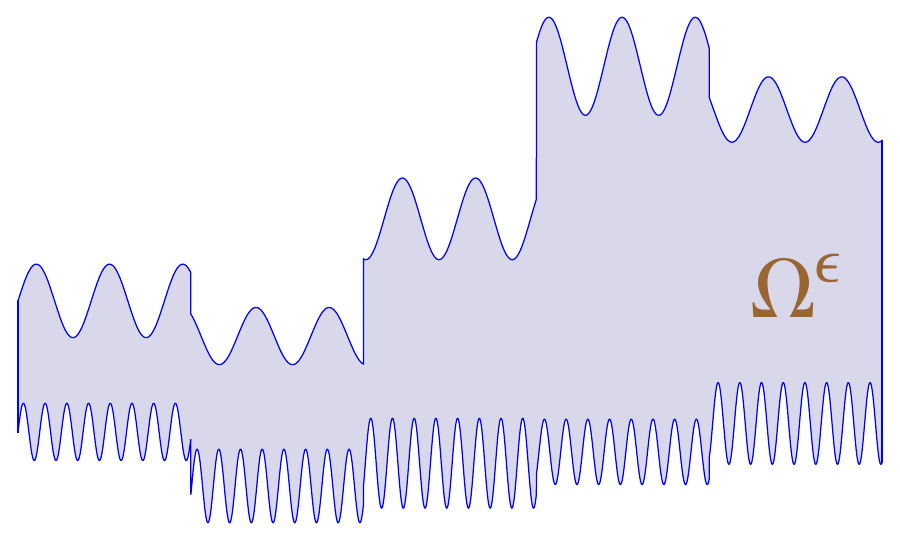}}
\caption{A piecewise periodic domain $\Omega^\epsilon$.}
\label{figura2}
\end{figure}

Now we can prove the following result 

\begin{lemma} \label{PPCT}
Assume that  $f^\epsilon \in L^2(\Omega^\epsilon)$ satisfies \eqref{FC} so that function 
\begin{equation} \label{FHD0}
\hat f^\epsilon(x) = \int_{-G_\epsilon(x)}^{H_\epsilon(x)} f(x,s) \, ds, \quad x \in (0,1),
\end{equation}
satisfies $\hat f^\eps\rightharpoonup \hat f$, w-$L^2(0,1)$. 

Then, there exists $\hat u \in H^1(0,1)$ such that, if $P_\epsilon$ is the extension operator given by Lemma \ref{EOT},  then 
$$
\| P_{\epsilon} u^\epsilon - \hat u \|_{L^2(\widetilde \Omega^\epsilon)} \to 0, \quad \textrm{ as } \epsilon \to 0,
$$ 
where $\hat u$  is the unique weak solution of the Neumann problem
\begin{equation} \label{VFPDL-piecewise0}
\int_0^1 \Big\{ q(x)  \; u_x(x) \, \varphi_x(x) 
+ p(x) \, u(x) \, \varphi(x) \Big\} dx = \int_0^1  \hat f(x) \, \varphi(x) \, dx
\end{equation}
for all $\varphi \in H^1(0,1)$, 
where $p(x)$ and $q(x)$ are piecewise constant functions defined a.e. $(0,1)$ as follows: if $0 = \xi_0 < \xi_1 < \ldots < \xi_N =1$, 
$p(x)=p_i$ for all $x\in (\xi_{i-1},\xi_i)$ where 
\begin{equation}\label{def-p}
\begin{gathered}
p_i=\frac{|Y_i^*|}{l_h} + \frac{1}{l_g} \int_0^{l_g} G_i(s) \, ds - G_{0,i},  \\
G_{0,i} = \min_{y \in \R} G_i(y),
\end{gathered}
\quad i=1,\ldots,N,
\end{equation}
$Y_i^*$ is the basic cell for $x\in(\xi_{i-1},\xi_i)$, that is,
$$
Y^*_i = \{ (y_1,y_2) \in \R^2 \; | \; 0< y_1 < l_h, \quad - G_{0,i} < y_2 < H_i(y_1) \},
$$
and $q(x)=q_i$ for all $x\in (\xi_{i-1},\xi_i)$ where
$$
q_i = \frac{1}{l_h} \int_{Y^*_i} \Big\{ 1 - \frac{\partial X_i}{\partial y_1}(y_1,y_2) \Big\} dy_1 dy_2
$$
and the function $X_i$ is the unique solution of
\begin{equation} \label{AUX}
\left\{
\begin{array}{l}
- \Delta X_i  =  0  \textrm{ in } Y_i^*  \\
\frac{\partial X_i}{\partial N}  =  0  \textrm{ on } B^i_2  \\
\frac{\partial X_i}{\partial N}  =  N_1 \textrm{ on } B^i_1  \\
X_i \quad \textrm{ $l_h$-periodic on } B^i_0 \\
\int_{Y_i^*} X_i \; dy_1 dy_2  =  0  
\end{array}
\right.
\end{equation}
where $B_0^i$, $B_1^i$ and $B_2^i$ are the lateral, upper and lower boundary of $\partial Y^*_i$ respectively.

\end{lemma}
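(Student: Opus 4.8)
The plan is to follow the standard homogenization strategy based on extension operators, a priori estimates, and oscillating test functions, treating each periodicity slab $(\xi_{i-1},\xi_i)\times\R$ separately and then gluing.

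\medskip

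\textbf{Step 1: Extract a weak limit.} First I would apply the a priori estimate \eqref{EST0} together with the extension operator $P_\epsilon$ of Lemma \ref{EOT}. Since $\eta(\epsilon)=\sup|H'_\epsilon|$ is of order $1/\epsilon$ (the upper boundary oscillates with period $\epsilon$), the bound on $\|\partial P_\epsilon u^\epsilon/\partial x_1\|_{L^2(\widetilde\Omega^\epsilon)}$ from \eqref{EQOP} is $K\{\|\partial u^\epsilon/\partial x_1\|_{L^2}+\eta(\epsilon)\|\partial u^\epsilon/\partial x_2\|_{L^2}\}$, and the second term is controlled because $\eta(\epsilon)\|\partial u^\epsilon/\partial x_2\|_{L^2}\sim\epsilon^{-1}\cdot\epsilon\cdot C$ is bounded. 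Likewise $\|\partial P_\epsilon u^\epsilon/\partial x_2\|_{L^2(\widetilde\Omega^\epsilon)}\le K\|\partial u^\epsilon/\partial x_2\|_{L^2}\to 0$. So on the fixed domain $\Omega_0\subset\widetilde\Omega^\epsilon$ the sequence $P_\epsilon u^\epsilon$ is bounded in $H^1(\Omega_0)$ with $\partial(P_\epsilon u^\epsilon)/\partial x_2\to 0$ in $L^2$; extracting a subsequence, $P_\epsilon u^\epsilon\rightharpoonup \hat u$ weakly in $H^1(\Omega_0)$ and strongly in $L^2(\Omega_0)$ (Rellich), with $\hat u$ independent of $x_2$, i.e.\ $\hat u\in H^1(0,1)$. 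One also has to verify the convergence holds on the thin collar $\widetilde\Omega^\epsilon\setminus\Omega_0$, which has measure $O(\epsilon^{\alpha-1})$ near the lower boundary plus the controlled strip up to $H_1$; the $L^2$-norm of $P_\epsilon u^\epsilon$ there tends to zero relative to the identification with $\hat u$ extended as a constant in $x_2$, using the basic estimate \eqref{basic-estimate} for the $G_\epsilon$-part.

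\medskip

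\textbf{Step 2: Identify the limit equation via oscillating test functions.} This is the heart of the argument. Fix an index $i$ and work on $(\xi_{i-1},\xi_i)$, where $G_\epsilon(x)=G_i(x/\epsilon)$, $H_\epsilon(x)=H_i(x/\epsilon)$ (in the piecewise periodic case $\alpha$ plays no role in the period of $G$, but the amplitude/period separation still forces the $G_i$-channels to contribute only through their average minus minimum — this is where $p_i$ gets the term $\frac1{l_g}\int_0^{l_g}G_i - G_{0,i}$). For a test function $\varphi\in C^\infty_c(\xi_{i-1},\xi_i)$ I would plug into the variational formulation \eqref{VFP} the corrector-modified test function
$$
\varphi^\epsilon(x_1,x_2)=\varphi(x_1)-\epsilon\,\varphi'(x_1)\,X_i\!\left(\frac{x_1}{\epsilon},x_2\right),
$$
where $X_i$ solves the cell problem \eqref{AUX} on $Y_i^*$, extended $l_h$-periodically in $y_1$ (and extended via the operator of Remark \ref{rem:stoperator}(ii) above the oscillating top). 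Computing $\partial\varphi^\epsilon/\partial x_1 = \varphi' - \varphi'(\partial X_i/\partial y_1)(x_1/\epsilon,x_2) + O(\epsilon)$ and $\partial\varphi^\epsilon/\partial x_2 = -\epsilon\varphi'(\partial X_i/\partial y_2)(x_1/\epsilon,x_2)$, the $1/\epsilon^2$-weighted $x_2$-term becomes $O(1)$ and, after passing to the limit using the cell equation (the terms involving $\Delta X_i$ and the Neumann condition $\partial X_i/\partial N = N_1$ on $B_1^i$ combine to kill the boundary contributions), one obtains precisely the effective coefficient $q_i=\frac1{l_h}\int_{Y_i^*}(1-\partial X_i/\partial y_1)$. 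The zeroth-order term $\int u^\epsilon\varphi^\epsilon$ produces $\frac{|Y_i^*|}{l_h}\int \hat u\varphi$ on the oscillating part plus the extra $G_i$-channel contribution; together these give $p_i\int \hat u\varphi$. The right-hand side: $\int_{\Omega^\epsilon} f^\epsilon\varphi^\epsilon \to \int \hat f\varphi$ by the hypothesis $\hat f^\epsilon\rightharpoonup\hat f$ in $w\text{-}L^2(0,1)$, after noting $\int_{\Omega^\epsilon}f^\epsilon(x_1,x_2)\varphi(x_1)\,dx_1dx_2 = \int_0^1\hat f^\epsilon(x_1)\varphi(x_1)\,dx_1$ exactly by \eqref{FHD0}, and the corrector term is $O(\epsilon)$. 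A rigorous way to justify all weak-$\times$-weakly-converging products is the classical energy method of Tartar (compensated compactness / div-curl), or equivalently the "periodic unfolding" bookkeeping; I would cite \cite{AP2,AVP} for the detailed computation and just indicate the structure.

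\medskip

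\textbf{Step 3: Glue across $\xi_i$ and upgrade to strong $L^2$ convergence.} Since $\hat u\in H^1(0,1)$ is globally continuous, the slab-by-slab identities assemble into \eqref{VFPDL-piecewise0} for all $\varphi\in C^\infty_c(\xi_{i-1},\xi_i)$ and $i$; density and the fact that no interface concentration occurs (the interface strips in $\Omega^\epsilon$ have measure $O(\epsilon)$, controlled by the trace/extension bounds) give \eqref{VFPDL-piecewise0} for all $\varphi\in H^1(0,1)$, with the natural Neumann condition built in. Uniqueness of $\hat u$ follows from Lax--Milgram applied to the coercive bilinear form $\int q\,u_x\varphi_x + p\,u\varphi$ (coercive because $q_i>0$ — standard for cell problems — and $p_i\ge |Y_i^*|/l_h>0$), so the whole sequence converges, not just a subsequence. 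Finally, strong convergence $\|P_\epsilon u^\epsilon - \hat u\|_{L^2(\widetilde\Omega^\epsilon)}\to 0$ follows from weak $H^1$ convergence plus Rellich on $\Omega_0$, plus smallness of the two collars: the upper collar $\{H_\epsilon(x_1)<x_2<H_1\}$ carries $P_\epsilon u^\epsilon$ with controlled norm and shrinking ``effective'' width after averaging, and the lower collar near the $G_\epsilon$-oscillations is handled by \eqref{basic-estimate} and the Lemma preceding Section \ref{PWPC} giving exponential/algebraic smallness in $\epsilon^{\alpha-1}$.

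\medskip

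The main obstacle is Step 2: constructing the right oscillating test function that simultaneously respects the $l_h$-periodic top oscillation, the very fast $G_\epsilon$-oscillation at the bottom, and the strong $1/\epsilon^2$ vertical diffusion, and then rigorously passing to the limit in the product of two only-weakly-converging sequences. The separation of scales between the two boundaries (period $\epsilon$ on top vs.\ $\epsilon^\alpha$ on the bottom) means the bottom contributes to the limit only through a ``thin-film averaging'' mechanism rather than through a genuine corrector, while the top contributes through the cell problem \eqref{AUX}; keeping these two effects from interfering — and correctly bookkeeping the term $\frac1{l_g}\int_0^{l_g}G_i - G_{0,i}$ in $p_i$ — is the delicate point, and is exactly where the combination of techniques from \cite{AP2,AP3} and \cite{AVP} is needed.
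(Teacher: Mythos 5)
Your overall architecture (extension operator, a priori estimate, weak limit, Tartar/corrector test functions, compensated compactness for the weak-weak products) is the right route and matches the paper's at high level, but there are concrete gaps. The most serious is in Step 2: your corrector test function $\varphi^\epsilon=\varphi-\epsilon\varphi' X_i(x_1/\epsilon,x_2)$ is only defined where $x_2>-G_{0,i}$, since $X_i$ lives on $Y_i^*=\{0<y_1<l_h,\ -G_{0,i}<y_2<H_i(y_1)\}$. You extend it above the oscillating top via Remark \ref{rem:stoperator}(ii) but give no definition in the fast-oscillating lower region $\{-G_\epsilon(x_1)<x_2<-G_{0,i}\}$, and then invoke \eqref{basic-estimate} to control that region. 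But \eqref{basic-estimate} is an estimate on \emph{solutions} of the auxiliary rectangle problem \eqref{P-basic}; to use it one must \emph{define} the test function in the lower region as such a solution. That is precisely what the paper does: it introduces a step function $G_0^\epsilon$ converging uniformly to $G_0$, decomposes $\widetilde\Omega^\epsilon$ into $\widetilde\Omega^\epsilon_\pm$ along the graph of $-G_0^\epsilon$, and sets the test function on each rectangle $Q_m^\epsilon\subset\widetilde\Omega^\epsilon_-$ equal to the boundary-layer solution $Z_m^\epsilon$ of \eqref{AUXSOL} (resp.\ \eqref{AUXSOL2}), after which \eqref{ESTX}--\eqref{ESTX20} shows the lower-region contribution is $O(\epsilon^{(\alpha-1)/2})$. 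This construction is the new ingredient of the lemma --- without it, the ``thin-film averaging'' you appeal to remains heuristic and the term $\tfrac1{l_g}\int_0^{l_g}G_i-G_{0,i}$ in $p_i$ has no rigorous origin.

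Two further issues. You write $G_\epsilon(x)=G_i(x/\epsilon)$ and claim that ``in the piecewise periodic case $\alpha$ plays no role in the period of $G$''; by \eqref{def-G} and \eqref{GHH} the correct scaling is $G_\epsilon(x)=G_i(x/\epsilon^\alpha)$ with $\alpha>1$, and this faster period is exactly what forces the bottom to enter the limit only through its area rather than through a cell corrector. If the bottom genuinely oscillated at period $\epsilon$ one would need a joint cell problem involving both boundaries, which is a different limit entirely. Finally, you correctly say the weak-weak products need a compensated-compactness argument, but you never exhibit the divergence-free pair: the paper constructs $\eta^\epsilon=\nabla\omega^\epsilon$ with $\omega^\epsilon=x_1-\epsilon\,P^iX_i(x_1/\epsilon,x_2)$, derives the divergence identity \eqref{diveq}, and tests \eqref{VFP3} with $\varphi=\phi\,\omega^\epsilon$ and $\psi=\phi\,u^\epsilon$. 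Exhibiting this structure (or an equivalent corrector cancellation with all boundary terms checked on $B^i_0,B^i_1,B^i_2$) is what actually closes the argument.
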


\begin{remark} Note that if we call $f_0(x)=\hat f(x)/p(x)$, then problem \eqref{VFPDL-piecewise0} is equivalent to 
$$
- r_i u_{xx}(x) + u(x) = f_0(x) \quad  x \in (\xi_{i-1},\xi_i)\\
$$
for $i = 1, ..., N$, where $r_i=q_i/p_i$,  satisfying the following boundary conditions
$$
\left\{
\begin{gathered}
u_x(\xi_0) = u_x(\xi_N) = 0 \\
r_i \, u_x(\xi_i-) - r_{i+1} \, u_x(\xi_i+) = 0 \quad i = 1, ..., N -1.
\end{gathered}
\right. 
$$
Here, $u_x(\xi_i \pm)$ denote the right(left)-hand side limits of $u_x$ at $\xi_i$.

\end{remark}

\begin{proof}  
In order to prove Lemma \ref{PPCT}, we have to pass to the limit in the variational formulation of problem \eqref{EP} given by \eqref{VFP}.
For this, we first divide the domain $\widetilde \Omega^\epsilon$ in two open sets using an appropriated step function $G_0^\epsilon$, depending on $\epsilon$, that converges uniformly to the step function $G_0$ defined in \eqref{G0} and independent of parameter $\epsilon$.

Let us denote by $m_\epsilon$ the largest integer such that $m_\epsilon l_g \epsilon^{\alpha} \leq 1$. 
Now, for each $i=1,\ldots,N$ and $m=1,\ldots,m_\epsilon$, we take the following point
\begin{equation} \label{eqgamma}
\gamma_{\epsilon,m}^i \in [(m-1) l_g \epsilon^{\alpha}, m l_g \epsilon^{\alpha}] \cap (\xi_{i-1},\xi_i), 
\end{equation}
the minimum point of the piecewise periodic function $G_\epsilon$ restricted to $[(m-1) l_g \epsilon^{\alpha}, m l_g \epsilon^{\alpha}] \cap (\xi_{i-1},\xi_i)$, that can be empty depending on the values of $i$ and $m$. 
As a consequence of this construction, it is easy to see that
\begin{equation} \label{eqG00}
G_i(\gamma_{\epsilon,m}^i/\epsilon^\alpha) = \min_{y \in \R} G_i(y)  = G_{0,i}.
\end{equation}

Since the interval $(\xi_{i-1}, \xi_i)$ is finite, and $G_\epsilon|_{(\xi_{i-1}, \xi_i)}$ is continuous, then there exist just a finite number of points $\gamma_{\epsilon,m}^i \in (\xi_{i-1},\xi_i)$.
We can rename them such that  
\begin{equation} \label{PIE}
\{ \gamma_{\epsilon,0}^i, \gamma_{\epsilon,1}^i, ..., \gamma_{\epsilon,m_\epsilon^i + 1}^i \}
\end{equation}
defines a partition for the sub interval $[\xi_{i-1},\xi_i]$ for some $m_\epsilon^i \in \N$, $m_\epsilon^i \leq m_\epsilon$, where $\gamma_{\epsilon,0}^i=\xi_{i-1}$ and $\gamma_{\epsilon, m_\epsilon^i+1}^i=\xi_i$.
Note that $\gamma_{\epsilon,m}^i$ does not need to be uniquely defined.

Consequently, we can take the union of all partitions \eqref{PIE} setting a partition for the unit interval $[0,1]$ 
$$
\{ \gamma_{\epsilon,0}, \gamma_{\epsilon,1}, ..., \gamma_{\epsilon, \hat m_\epsilon+1} \},
$$
with $\gamma_{\epsilon,0}=0$ and $\gamma_{\epsilon,\hat m_\epsilon+1}=1$ for some $\hat m_\epsilon \in \N$ that we still denote by $m_\epsilon$.
Also, we have  
$$
\{ (\gamma_{m,\epsilon}, x_2) \; | \; - G_1< x_2 < - G_{0,i}  \}\cap \Omega^\epsilon=\emptyset,
$$ 
for all $m=1,2,\ldots, m_\epsilon$. 

Next we take $\epsilon$ small enough, and then we consider the convenient step function 
$$
G_{0}^\epsilon(x)=
\left\{
\begin{array} {ll}
 G_{0,1}, & x\in  [0, \gamma_{\epsilon,1}] \\ 
  \max\{ G(\gamma_{\epsilon,m}, \frac{\gamma_{\epsilon,m}}{\epsilon^\alpha}), G(\gamma_{\epsilon,m+1}, \frac{\gamma_{\epsilon,m+1}}{\epsilon^\alpha}) \}, & x\in  (\gamma_{\epsilon,m}, \gamma_{\epsilon,m+1}], m=1,2\ldots, m_\epsilon - 1   \\
G(1,1/\epsilon^\alpha) , & x\in  (\gamma_{\epsilon,m_\epsilon -1}, 1]
\end{array}
\right. .
$$

Due to \eqref{eqG00}, we have $G(\gamma_{\epsilon,m}, \frac{\gamma_{\epsilon,m}}{\epsilon^\alpha}) = G_i(\gamma_{\epsilon,m}/\epsilon^\alpha) = \min_{y \in \R} G_i(y)  = G_{0,i}$, whenever $\gamma_{\epsilon,m} \in (\xi_{i-1},\xi_i)$ for some $i=1,\ldots,N$, and so, $G_\epsilon(x) \geq G_0^\epsilon(x) \geq G_0(x)$ in $(0,1)$ where $G_0$ is the step function given by \eqref{G0}. 
Consequently, we have constructed a suitable step function $G^\epsilon_0$ that converges uniformly to $G_0$. More precisely, we have obtained
\begin{equation} \label{G_0C}
\| G_0 - G^\epsilon_0 \|_{L^\infty(0,1)} \to 0, \quad \textrm{ as } \epsilon \to 0.
\end{equation}

Using the step function $G_0^\epsilon$ we can introduce now the following open sets 
\begin{equation} \label{DOMAINS}
\begin{gathered}
\widetilde \Omega^\epsilon_+ = \{ (x_1, x_2) \in \R^2 \, | \, x_1 \in (0,1), \, - G_0^\epsilon (x_1) < x_2 < H_1 \} \textrm{ and } \\
\widetilde \Omega^\epsilon_- = \{ (x_1, x_2) \in \R^2 \, | \, x_1 \in (0,1), \, - G^\epsilon (x_1) < x_2 < - G_0^\epsilon(x_1) \}.
\end{gathered}
\end{equation}
Notice that
$$
\begin{gathered}
\widetilde \Omega^\epsilon = \mbox{Int} \left( \overline{\widetilde \Omega^\epsilon_+ \cup \widetilde \Omega^\epsilon_-} \right).
\end{gathered}
$$

Hence, if we denote by $\widetilde \cdot$ the standard extension by zero and by $\chi^\epsilon$ the characteristic function of $\Omega^\epsilon$, we can rewrite \eqref{VFP} as 
\begin{eqnarray} \label{VFP2}
& & \int_{\widetilde \Omega^\epsilon_-}  \left\{ \widetilde{\frac{\partial u^\epsilon}{\partial x_1}} \frac{\partial \varphi}{\partial x_1} 
+ \frac{1}{\epsilon^2} \widetilde{\frac{\partial u^\epsilon}{\partial x_2}} \frac{\partial \varphi}{\partial x_2} \right\} dx_1 dx_2 
+ \int_{\widetilde \Omega^\epsilon_+} \left\{ \widetilde{\frac{\partial u^\epsilon}{\partial x_1}} \frac{\partial \varphi}{\partial x_1} 
+ \frac{1}{\epsilon^2} \widetilde{\frac{\partial u^\epsilon}{\partial x_2}} \frac{\partial \varphi}{\partial x_2} \right\} dx_1 dx_2 \nonumber \\
& & \quad \quad + \int_{\widetilde \Omega^\epsilon} \chi^\epsilon \, P_\epsilon u^\epsilon \, \varphi \, dx_1 dx_2 = \int_{\widetilde \Omega^\epsilon} \chi^\epsilon \, f^\epsilon \varphi dx_1 dx_2, \quad  \forall \varphi \in H^1(\Omega^\epsilon),
\end{eqnarray}
where $P_\epsilon$ is the extension operator constructed in Lemma \ref{EOT}.

Now, let us to pass to the limit in the different functions that form the integrands of \eqref{VFP2} to get the homogenized problem. 
It is worth to observe that we will combine here techniques from \cite{AP2, AP3, AVP,Tt} establishing suitable oscillating test functions to accomplish our goal.

\par\bigskip\noindent {\bf  (a). Limit of $P_\epsilon u^\epsilon$ in $L^2(\Omega^\epsilon)$.}

First we observe that, due to \eqref{EST0} and Lemma \ref{EOT}, there exists $K>0$ independent of $\epsilon$ such that $P_\epsilon u^\epsilon$ satisfies
\begin{equation} \label{ESTPE}
\|P_\epsilon u^\epsilon \|_{L^2(\widetilde \Omega^\epsilon)}, \Big\| \frac{\partial P_\epsilon u^\epsilon}{\partial x_1} \Big\|_{L^2(\widetilde \Omega^\epsilon)}
\textrm{ and } \frac{1}{\epsilon} \Big\| \frac{\partial P_\epsilon u^\epsilon}{\partial x_2} \Big\|_{L^2(\widetilde \Omega^\epsilon)} 
\le K, \quad  \forall \epsilon > 0.
\end{equation}
Hence, if $\Omega_0$ is the open set given by \eqref{Omega0}, independent of $\epsilon$, $P_\epsilon u^\epsilon|_{\Omega_0} \in H^1(\Omega_0)$, and we can extract a subsequence, still denoted by $P_\epsilon u^\epsilon$, such that
\begin{equation} \label{WC}
\begin{array}{c}
P_\epsilon u^\epsilon \rightharpoonup u_0 \quad w-H^1(\Omega_0) \\
P_\epsilon u^\epsilon \rightarrow u_0 \quad s-H^s(\Omega_0) \textrm{ for all } s \in [0, 1) \textrm{ and }\\
\frac{\partial P_\epsilon u^\epsilon}{\partial x_2} \rightarrow 0 \quad s-L^2(\Omega_0) \\
\end{array}
\end{equation}
as $\epsilon \to 0$, for some  $u_0 \in H^1(\Omega_0)$.
Note that $u_0(x_1,x_2)$ does not depend on the variable $x_2$, that is,  
$
\frac{\partial  u_0}{\partial x_2}(x_1,x_2) = 0 \textrm{ a.e. } \Omega_0.
$
Indeed, for all $\varphi \in \mathcal{C}^\infty_0(\Omega_0)$, we have from \eqref{WC} that
\begin{equation} \label{u0x}
\int_{\Omega_0} u_0 \, \frac{\partial \varphi}{\partial x_2} \, dx_1 dx_2 = \lim_{\epsilon \to 0} \int_{\Omega_0} P_\epsilon u^\epsilon \, \frac{\partial \varphi}{\partial x_2} \, dx_1 dx_2 = - \lim_{\epsilon \to 0} \int_{\Omega_0} \frac{\partial P_\epsilon u^\epsilon}{\partial x_2} \, \varphi \, dx_1 dx_2 = 0,
\end{equation}
and then, $u_0(x_1,x_2) = u_0(x_1)$ for all $(x_1,x_2) \in \Omega_0$ implying $u_0 \in H^1(0,1)$.

From \eqref{WC}, we also have that the restriction of $P_\epsilon u^\epsilon$ to coordinate axis $x_1$ converges to $u_0$, in that,  
if $\Gamma = \{ (x_1,0) \in \R^2 \, | \, x_1 \in (0,1) \}$, then 
\begin{equation} \label{TRACE}
P_\epsilon u^\epsilon|_{\Gamma} \rightarrow u_0 \quad s-H^s(\Gamma), \quad \forall s \in [0,1/2).
\end{equation}
Thus, using \eqref{TRACE} with $s=0$, we can obtain the $L^2$-convergence of $P_\epsilon u^\epsilon$ to $u_0$ in $\widetilde \Omega^\epsilon$. 
In fact, due to \eqref{TRACE}, we have that
\begin{eqnarray*}
& &\|P_\epsilon u^\epsilon|_{\Gamma} - u_0\|^2_{L^2(\widetilde \Omega_\epsilon)}  = 
\int_0^1 \int_{-G_\epsilon(x_1)}^{H_1} | P_\epsilon u^\epsilon(x_1,0) - u_0(x_1) |^2 \, dx_2 dx_1 \\
& &\qquad\leq  C(G,H)\, \| P_\epsilon u^\epsilon|_{\Gamma} - u_0 \|_{L^2(\Gamma)}^2 \to  0, \textrm{ as } \eps \to 0,
\end{eqnarray*}
where $C(G,H)=G_1+H_1$.
Also,
$$
| P_\epsilon u^\epsilon(x_1,x_2) - P_\epsilon u^\epsilon(x_1,0) |^2 
= \left| \int_0^{x_2} \frac{\partial P_\epsilon u^\eps}{\partial x_2}(x_1,s) \, ds \right|^2 
\leq \left( \int_0^{x_2} \left| \frac{\partial P_\epsilon u^\epsilon}{\partial x_2}(x_1,s) \right|^2 ds \right) \, |x_2|.
$$
Consequently, integrating in $\widetilde \Omega^\epsilon$ and using \eqref{ESTPE}, we get
\begin{eqnarray*}
& & \|P_\epsilon u^\epsilon - P_\epsilon u^\epsilon|_{\Gamma} \|_{L^2(\widetilde \Omega^\epsilon)}^2 
\quad \leq  \int_0^1 \int_{-G_\epsilon(x_1)}^{H_1} \left(  \int_0^{x_2} \left| \frac{\partial P_\epsilon u^\epsilon}{\partial x_2}(x_1,s) \right|^2 ds \right) 
\, |x_2| \, dx_2 dx_1 \\ & & \qquad \leq  C(G,H) \, \left\| \frac{\partial P_\epsilon u^\epsilon}{\partial x_2} \right\|_{L^2(\widetilde \Omega^\epsilon)}^2  \to 0 \textrm{ as } \epsilon \to 0.
\end{eqnarray*}

Finally, since
\begin{eqnarray*}
\| P_\epsilon u^\epsilon - u_0 \|_{L^2(\widetilde \Omega^\epsilon)}  & \leq & \|P_\epsilon u^\epsilon - P_\epsilon u^\epsilon|_{\Gamma} \|_{L^2(\widetilde \Omega^\epsilon)}+ 
 \|P_\epsilon u^\epsilon|_{\Gamma} - u_0\|_{L^2(\widetilde \Omega^\epsilon)},
\end{eqnarray*}
we conclude that
\begin{equation} \label{L2CONV}
\| P_\epsilon u^\epsilon - u_0 \|_{L^2(\widetilde \Omega^\epsilon)} \to 0, \quad  \textrm{ as } \epsilon \to 0.
\end{equation}

\par\bigskip\noindent {\bf  (b). Limit of $\chi^\epsilon$.}

Let us consider the family of representative cell $Y^*_i$, $i=1,2\ldots,N$,  defined by 
$$
Y^*_i=\{ (y_1,y_2) \in \R^2 \; | \; 0<y_1<l_h \textrm{ and } -G_{0,i} < y_2 < H_i(y_1)\}
$$
and let $\chi_i$ be their characteristic function extended periodically on the variable $y_1 \in \R$ for each $i=1,\ldots, N$.  Eventually we will consider the family of representative cells $Y^*(x) = Y^*_i$ whenever $x \in (\xi_{i-1}, \xi_i)$.

If we denote by $\chi^\epsilon_i$ the characteristic function of the set 
$$
\Omega^\epsilon_{i,+} = \{(x_1,x_2) \, | \,  \xi_{i-1}<x_1<\xi_{i}, \, -G_{0,i}<x_2<H_i(x_1/\epsilon)\},
$$ 
we easily see that
\begin{equation} \label{chip}
\begin{gathered}
\chi^\epsilon(x_1,x_2) = \chi^\epsilon_i(x_1,x_2)  \quad \textrm{ and } \quad 
\chi^\epsilon_i(x_1,x_2) 
= \chi_i \left(\frac{x_1}{\epsilon},x_2\right)
\end{gathered} 
\end{equation}
whenever $(x_1,x_2) \in \Omega^\epsilon_{i,+}$.
Thus, due to \eqref{chip} and Average Theorem \cite[Theorem 2.6]{CD}, we have for each $i=1,\ldots,N$, and $x_2 \in (-G_{0,i}, H_1)$ that 
\begin{equation} \label{chi0}
\chi^\epsilon_i( \cdot, x_2) \stackrel{\eps\to 0}{\rightharpoonup} \theta_i(x_2) := \frac{1}{l_h} \int_0^{l_h} \chi_i(s,x_2) \, ds,  
\quad w^*-L^\infty(\xi_{i-1},\xi_i).
\end{equation}
Note that the limit function $\theta_i$ does not dependent on the variable $x_1\in (\xi_{i-1},\xi_i)$, although it depends on each $i=1,\ldots,N$, and it is related to the area of the open set $Y^*_i$ by formula
\begin{equation} \label{ITHETA}
l_h \int_{-G_{0,i}}^{H_1} \theta_i(x_2) dx_2 = |Y^*_i|.
\end{equation}

Moreover, using Lebesgue's Dominated Convergence Theorem and \eqref{chi0}, we can get that
\begin{equation} \label{chi}
\chi^\epsilon\stackrel{\eps\to 0}{ \rightharpoonup} \theta, \quad w^*-L^\infty(\Omega_0),
\end{equation}
where $\theta(x_1,x_2)=\theta_i(x_2)$ if $x_1\in (\xi_{i-1},\xi_i)$, $i=1,2,\ldots,N$. 
Indeed, from (\ref{chi0}) we have  
\begin{equation} \label{SL1}
\mathcal{F}^\epsilon_i(x_2) = \int_{\xi_{i-1}}^{\xi_i}  \varphi(x_1,x_2) \, \Big\{ \chi^\epsilon_i(x_1,x_2) - \theta_i(x_2) \Big\} \, dx_1 
\to 0, \textrm{ as } \epsilon \to 0, 
\end{equation}
a.e. $x_2 \in (-G_{0,i}, H_1)$ and for all $\varphi \in L^1(\Omega_0)$.
Thus, \eqref{chi} is a consequence of \eqref{SL1} and 
$$
\int_{\Omega_i} \varphi(x_1,x_2) \,\Big\{ \chi^\epsilon_i(x_1,x_2) - \theta_i(x_2) \Big\} \, dx_1 dx_2 
= \int_{-G_{0,i}}^{H_1} \mathcal{F}^\epsilon_i(x_2) dx_2, 
$$ 
since $|\mathcal{F}^\epsilon_i(x_2)| \le \int_{\xi_{I-1}}^{\xi_i}| \varphi(x_1,x_2)| dx_1$.

Notice that \eqref{ITHETA} implies the family of representative cells $Y^*(x)$ satisfies 
$$
Y^*(x) = l_h \int_{-G_0(x)}^{H_1} \theta(x_2) \, dx_2, \quad x \in (0,1).
$$

\par\bigskip\noindent {\bf (c). Limit in the tilde functions.}

Since $\|f^{\epsilon}\|_{L^{2}(\Omega^\epsilon)}$ is uniformly bounded, we get from \eqref{priori}  that there exists a constant $K>0$ independent of $\epsilon$ such that
$$
\begin{gathered}
\| \widetilde{u^\epsilon} \|_{L^2(\Omega_0)}, \Big\| \widetilde{\frac{\partial u^\epsilon}{\partial x_1}} \Big\|_{L^2(\Omega_0)} \textrm{ and }
\frac{1}{\epsilon} \Big\| \widetilde{\frac{\partial u^\epsilon}{\partial x_2}} \Big\|_{L^2(\Omega_0)} \le K
\textrm{ for all } \epsilon > 0.
\end{gathered}
$$
Then, we can extract a subsequence, still denoted by $\widetilde{u^\epsilon}$, $\widetilde{\frac{\partial u^\epsilon}{\partial x_1}}$
and $\widetilde{\frac{\partial u^\epsilon}{\partial x_2}}$, such that
\begin{equation} \label{WC0}
\begin{array}{c}
\widetilde{u^\epsilon} \rightharpoonup u^* \quad w-L^2(\Omega_0) \\
\widetilde{\frac{\partial u^\epsilon}{\partial x_1}} \rightharpoonup \xi^* \quad w-L^2(\Omega_0) \textrm{ and }\\
\widetilde{\frac{\partial u^\epsilon}{\partial x_2}} \rightarrow 0 \quad s-L^2(\Omega_0) \\
\end{array}
\end{equation}
as $\epsilon \to 0$, for some  $u^*$ and $\xi^* \in L^2(\Omega_0)$.


\par\bigskip\noindent {\bf  (d). Test functions.}

Here we introduce the first class of test functions needed to pass to the limit in the variational formulation \eqref{VFP2}. 
For each $\phi \in H^1(0,1)$ and $\eps > 0$, 
we define the following test function in $H^1(\widetilde \Omega^\epsilon)$
\begin{equation} \label{TESTF}
\begin{gathered}
\varphi^\epsilon(x_1,x_2) = \left\{
\begin{array}{ll}
\phi(x_1), & (x_1,x_2) \in \widetilde \Omega_+^\epsilon \\
Z^\epsilon_m(x_1,x_2), & (x_1,x_2) \in \widetilde \Omega^\epsilon_-\cap Q^\epsilon_m, \quad m=0,1,2,\ldots  
\end{array}
\right.
\end{gathered}
\end{equation}
where  $Q^\epsilon_m$ is the rectangle defined from the step function $G_0^\epsilon$,
\begin{equation} \label{RQ}
Q^\epsilon_m=\{ (x_1,x_2) \in \R^2 \, | \, \gamma_{m,\epsilon}<x_1<\gamma_{m+1,\epsilon}, \, - G_1 < x_2 < - G_0^\epsilon(x_1) \},
\end{equation}
and the function $Z^\epsilon_m$ is the solution of the problem
\begin{equation} \label{AUXSOL}
\left\{
\begin{array}{l}
- \frac{\partial^2 Z^\epsilon}{\partial x_1^2} - \frac{1}{\epsilon^2} \frac{\partial^2 Z^\epsilon}{\partial x_2^2} 
= 0, \quad \textrm{ in } Q^\epsilon_m \\
\frac{\partial Z^\epsilon}{\partial N^\epsilon}=0, \quad \textrm{ on }  \partial Q^\epsilon_m \backslash \Gamma_m^\epsilon  \\
Z^\epsilon = \phi,  \quad \textrm{ on } \Gamma_m^\epsilon
\end{array}
\right. 
\end{equation}
where $\Gamma_m^\epsilon$ is the top of the rectangle $Q^\epsilon_m$ given by 
$$
\Gamma_m^\epsilon = \{ (x_1, - G_0^\eps(x_1)) \, | \, \gamma_{m,\epsilon} < x_1 < \gamma_{m+1,\epsilon}\}.
$$

It is a direct consequence of \eqref{eqgamma} and estimate \eqref{basic-estimate} that functions $Z^\epsilon_m$ satisfies 
\begin{equation} \label{ESTX}
\left\| \frac{\partial Z^\epsilon_m}{\partial x_1} \right\|^2_{L^2(Q^\epsilon_m)}
+ \frac{1}{\epsilon^2} \left\| \frac{\partial Z^\epsilon_m}{\partial x_2} \right\|^2_{L^2(Q^\epsilon_m)}
\leq C \epsilon^{\alpha-1} \| \phi' \|^2_{L^2(\gamma_{m,\epsilon},\gamma_{m+1,\epsilon})}.
\end{equation}
Hence, if we denote by  $Q^\epsilon = \cup_{i=1}^{m_\epsilon} Q^\epsilon_i$, we have $\widetilde \Omega^\epsilon_-= Q^\epsilon \cap \widetilde \Omega^\epsilon$, and then, 
\begin{equation} \label{ESTX20}
\begin{array}{l}
\displaystyle \left\| \frac{\partial \varphi^\epsilon}{\partial x_1} \right\|^2_{L^2(\widetilde \Omega^\epsilon_-)}
+ \frac{1}{\epsilon^2} \left\| \frac{\partial \varphi^\epsilon}{\partial x_2} \right\|^2_{L^2(\widetilde \Omega^\epsilon_-)}
= \sum^{m_\epsilon}_{i=0} \left( 
\left\| \frac{\partial \varphi^\epsilon}{\partial x_1} \right\|^2_{L^2(Q^\epsilon_m)}
+ \frac{1}{\epsilon^2} \left\| \frac{\partial \varphi^\epsilon}{\partial x_2} \right\|^2_{L^2(Q^\epsilon_m)}
\right) \\
\displaystyle \qquad \quad\leq \sum^{m_\epsilon}_{i=0} C \, \epsilon^{\alpha -1} \, \left\| \phi' \right\|^2_{L^2(\gamma_{i,\epsilon},\gamma_{i+1,\epsilon})}  \leq  C \, \epsilon^{\alpha -1} \left\| \phi' \right\|^2_{L^2(0,1)}.
\end{array}
\end{equation}
Eventually we will use $Z^\epsilon$ to denote $Z^\epsilon(x_1,x_2) = Z^\epsilon_m(x_1,x_2)$ whenever $(x_1,x_2) \in \widetilde \Omega^\epsilon_- \cap Q^\epsilon_m$.

Consequently, we can argue as in \eqref{L2CONV} to show 
\begin{equation} \label{TFCONV}
\| \varphi^\epsilon - \phi \|_{L^2(\widetilde \Omega^\epsilon)} \to 0, \quad \textrm{ as } \epsilon \to 0.
\end{equation}
Indeed, since
$$
\varphi^\epsilon(x_1,x_2) - \phi(x_1) = \varphi^\epsilon(x_1,x_2) - \varphi^\epsilon(x_1,0) 
= \int_0^{x_2} \frac{\partial \varphi^\epsilon}{\partial x_2}(x_1,s) \, ds,
$$
we have from \eqref{TESTF} and \eqref{ESTX20} that 
\begin{eqnarray*}
\| \varphi^\epsilon - \phi \|_{L^2(\widetilde \Omega^\epsilon)}^2  \leq  C(G,H) \, \left\| \frac{\partial \varphi^\epsilon}{\partial x_2} \right\|_{L^2(\widetilde \Omega^\epsilon)}^2
 \le   C \, C(G,H) \, \epsilon^{1+\alpha} \, \left\| \phi' \right\|^2_{L^2(0,1)} 
 \to  0, \textrm{ as } \epsilon \to 0.
\end{eqnarray*}

\par\bigskip\noindent {\bf  (e). Passing to the limit in the weak formulation}.

Now let us to perform our first evaluation of the variational formulation \eqref{VFP2} of elliptic problem \eqref{EP} using the test functions $\varphi^\epsilon$ defined in \eqref{TESTF}.
For this, we analyze the different functions that form the integrands in \eqref{VFP2} using the computations previously established.

\begin{itemize}
\item First integrand: we obtain 
\begin{equation} \label{INT1}
\int_{\widetilde \Omega^\epsilon_-} \Big\{ \widetilde{\frac{\partial u^\epsilon}{\partial x_1}} \frac{\partial \varphi^\eps}{\partial x_1} 
+ \frac{1}{\epsilon^2} \widetilde{\frac{\partial u^\epsilon}{\partial x_2}} \frac{\partial \varphi^\eps}{\partial x_2} \Big\} dx_1 dx_2
\to 0, \quad \textrm{ as } \epsilon \to 0.
\end{equation}
 
Indeed, from \eqref{ESTX}, $\alpha > 1$ and \eqref{EST0}, we have that there exists $C>0$ independent of $\epsilon$ such that 
\begin{eqnarray*} \label{eqd1}
& & \qquad \qquad\qquad \left|\int_{\widetilde \Omega^\epsilon_-} \left\{ \widetilde{\frac{\partial u^\epsilon}{\partial x_1}} \frac{\partial \varphi^\epsilon}{\partial x_1} 
+ \frac{1}{\epsilon^2} \widetilde{\frac{\partial u^\epsilon}{\partial x_2}} \frac{\partial \varphi^\epsilon}{\partial x_2} \right\} dx_1 dx_2\right| \nonumber  
\\
& & \quad \leq  \left( \int_{\Omega^\epsilon} \left\{ \left( \frac{\partial u^\epsilon}{\partial x_1} \right)^2
+ \frac{1}{\epsilon^2} \left( \frac{\partial u^\epsilon}{\partial x_2} \right)^2 \right\} dx_1 dx_2 \right)^{1/2}
\left( \int_{\widetilde \Omega^\epsilon_-} \left\{ \left( \frac{\partial Z^\epsilon}{\partial x_1} \right)^2
+ \frac{1}{\epsilon^2} \left( \frac{\partial Z^\epsilon}{\partial x_2} \right)^2 \right\} dx_1 dx_2 \right)^{1/2} \nonumber \\
& & \quad \leq  C \, \eps^{(\alpha - 1)/2} \, \| \phi' \|_{L^2(0,1)} \to 0, \textrm{ as } \eps \to 0. 
\end{eqnarray*}

\par\medskip 
\item Second integrand: we have
\begin{equation} \label{INT2}
\int_{\widetilde \Omega^\epsilon_+} \left\{ \frac{\partial u^\epsilon}{\partial x_1} \frac{\partial \varphi^\epsilon}{\partial x_1} 
+ \frac{1}{\epsilon^2} \frac{\partial u^\epsilon}{\partial x_2} \frac{\partial \varphi^\epsilon}{\partial x_2} \right\} dx_1 dx_2
\to \int_{\Omega_0} \xi^* \, \phi'(x_1) \, dx_1 dx_2, \quad \textrm{ as } \epsilon \to 0.
\end{equation}
For see this, we first observe that \eqref{TESTF} implies
$$
\frac{\partial \varphi^\epsilon}{\partial x_1} \Big|_{\widetilde \Omega^\epsilon_+} = \frac{\partial \phi}{\partial x_1} = \phi' \quad
\textrm{ and } \quad 
\frac{\partial \varphi^\epsilon}{\partial x_2} \Big|_{\widetilde \Omega^\epsilon_+} = \frac{\partial \phi}{\partial x_2} = 0.
$$ 
Then, since $G^\epsilon_0 \geq G_0$ in $(0,1)$, we have $\Omega_0 \subset \widetilde \Omega^\epsilon_+$ and   
\begin{eqnarray} \label{eqSI}
& & \int_{\widetilde \Omega^\epsilon_+} \left\{ \widetilde{\frac{\partial u^\epsilon}{\partial x_1}} \frac{\partial \varphi^\epsilon}{\partial x_1} 
+ \frac{1}{\epsilon^2} \widetilde{\frac{\partial u^\epsilon}{\partial x_2}} \frac{\partial \varphi^\epsilon}{\partial x_2} \right\} dx_1 dx_2
 =  \int_{\widetilde \Omega^\epsilon_+} \widetilde{\frac{\partial u^\epsilon}{\partial x_1}}(x_1,x_2) \,  \phi'(x_1) \, dx_1 dx_2  \nonumber \\
& & \quad \quad = \int_{\Omega_0} \widetilde{\frac{\partial u^\epsilon}{\partial x_1}}(x_1,x_2) \,  \phi'(x_1) \, dx_1 dx_2 
 + \int_{\widetilde \Omega^\epsilon_+ \backslash \Omega_0} \frac{\partial u^\epsilon}{\partial x_1}(x_1,x_2) \,  \phi'(x_1) \, dx_1 dx_2.
\end{eqnarray}
Thus, from \eqref{WC0}, we pass to the limit as $\epsilon \to 0$ in the first integral of \eqref{eqSI} to get
\begin{eqnarray} \label{SL@}
\int_{\Omega_0} \frac{\partial u^\epsilon}{\partial x_1}(x_1,x_2) \,  \phi'(x_1) \, dx_1 dx_2
& \to & \int_{\Omega_0} \xi^* \,  \phi'(x_1) \, dx_1 dx_2.   
\end{eqnarray}
Hence, we will prove \eqref{INT2} if we show that the remaining integral of \eqref{eqSI} goes to zero as $\epsilon \to 0$. Let us evaluate it. 
From \eqref{EST0}), \eqref{Omega0}, \eqref{G_0C} and \eqref{DOMAINS}, we have 
\begin{eqnarray} \label{eqINT22}
\left| \int_{\widetilde \Omega^\epsilon_+ \backslash \Omega_0} \widetilde{\frac{\partial u^\epsilon}{\partial x_1}}(x_1,x_2) \,  \phi'(x_1) \, dx_1 dx_2 \right|
& \leq & \left\| \frac{\partial u^\epsilon}{\partial x_1} \right\|_{L^2(\Omega^\epsilon)}
\, \| \phi' \|_{L^2(\Omega^\epsilon_+ \backslash \Omega_0)} \nonumber \\
& \leq & C \, \| \phi' \|_{L^2(0,1)} \, \| G^\epsilon_0 - G_0 \|_{L^\infty(0,1)}^{1/2} 
\to 0, 
\end{eqnarray}
as $\epsilon \to 0$.
Therefore, \eqref{INT2} follows from \eqref{eqSI}, \eqref{SL@} and \eqref{eqINT22}.
\par\medskip

\item Third integrand: if $p(x)$ is that one in \eqref{def-p}, then
\begin{equation} \label{INT3}
\int_{\widetilde \Omega^\epsilon} \chi^\epsilon \, P_\epsilon u^\epsilon \, \varphi^\epsilon \, dx_1 dx_2 \to \int_0^1 p(x) \, u_0(x) \, \phi(x) \, dx, \quad \textrm{ as } \epsilon \to 0.
\end{equation}

We start observing that $P_\epsilon u^\epsilon |_{\Omega^\epsilon} = u^\epsilon$, and so
\begin{eqnarray*}
\int_{\widetilde \Omega^\epsilon} \chi^\epsilon \, P_\epsilon u^\epsilon \, \varphi^\epsilon \, dx_1 dx_2 & = & 
\int_{\Omega^\epsilon} \left( u^\epsilon - u_0 \right) \, \varphi^\epsilon \, dx_1 dx_2
+ \int_{\Omega^\epsilon} u_0 \, \left( \varphi^\epsilon - \phi \right) \, dx_1 dx_2 \\
& &  \qquad + \int_{\Omega^\epsilon} u_0 \, \phi \, dx_1 dx_2.
\end{eqnarray*}
Moreover, due to (\ref{L2CONV}) and (\ref{TFCONV}), we have 
$$
\begin{gathered}
\int_{\Omega^\epsilon} \left(u^\epsilon - u_0 \right) \, \varphi^\epsilon \, dx_1 dx_2 \to 0 \textrm{ and } \int_{\Omega^\epsilon} u_0 \, \left( \varphi^\epsilon - \phi \right) \, dx_1 dx_2 \to 0,
\end{gathered}
$$
as $\epsilon \to 0$, since $\Omega^\epsilon \subset \widetilde \Omega^\epsilon$, and so
$$
\| u^\epsilon - u_0 \|_{L^2(\Omega^\epsilon)} \leq \| P_\epsilon u^\epsilon - u_0 \|_{L^2(\widetilde \Omega^\epsilon)} 
\quad \textrm{ and } \quad 
\| \varphi^\epsilon - \phi \|_{L^2(\Omega^\epsilon)} \leq \| \varphi^\epsilon - \phi \|_{L^2(\widetilde \Omega^\epsilon)}.
$$

Thus, we need only to pass to the limit in  
\begin{equation} \label{SL@@}
\int_{\Omega^\epsilon} u_0(x_1) \, \phi(x_1) \, dx_1 dx_2 = \int_0^1 u_0(x) \, \phi(x) \, \left(H_\epsilon(x) + G_\epsilon(x) \right) \, dx,
\end{equation}
and then obtain \eqref{INT3}.
For this, we use the Average Theorem from \cite[Lemma 4.2]{AP3}, as well as, condition \eqref{GHH}. 
Indeed,  
\begin{eqnarray*}
H_\epsilon(x) + G_\epsilon(x) & = & H(x, x/\epsilon) + G(x,x/\epsilon^\alpha) \\
& \rightharpoonup & \frac{1}{l_h} \int_0^{l_h} H(x,y) \, dy + \frac{1}{l_g} \int_0^{l_g} G(x,y) \, dy, \quad  w^* - L^\infty(0,1),
\end{eqnarray*}
as $\epsilon \to 0$.
Hence, since $\frac{|Y^*(x)|}{l_h} - G_0(x) = \frac{1}{l_h} \int_0^{l_h} H(x,y) \, dy$, we have 
$$
H_\epsilon(x) + G_\epsilon(x) \rightharpoonup p(x), \quad  w^* - L^\infty(0,1).
$$


\item Fourth integrand: we claim that
\begin{equation} \label{INT4}
\int_{\widetilde \Omega^\epsilon} \chi^\epsilon \, f^\epsilon \, \varphi^\epsilon \, dx_1 dx_2 \to \int_0^1  \hat f(x) \, \phi(x) \, dx, \quad \textrm{ as } \epsilon \to 0.
\end{equation}
Since
$$
\int_{\widetilde \Omega^\epsilon} \chi^\epsilon \, f^\epsilon \, \varphi^\epsilon \, dx_1 dx_2 = 
\int_{\widetilde \Omega^\epsilon} \chi^\epsilon \, f^\epsilon \, \left( \varphi^\epsilon - \phi \right) \, dx_1 dx_2
+ \int_{\widetilde \Omega^\epsilon} \chi^\epsilon \, f^\epsilon \, \phi \, dx_1 dx_2
$$ 
and 
$$
\int_{\widetilde \Omega^\epsilon} \chi^\epsilon f^\epsilon \, \phi \, dx_1 dx_2 = 
\int_0^1 \left( \int_{-G_\epsilon(x_1)}^{H_\eps(x_1)} f^\epsilon(x_1,x_2) \, dx_2 \right) \phi(x_1) \, dx_1
= \int_0^1 \hat f^\epsilon(x) \, \phi(x) \, dx,
$$
we obtain \eqref{INT4} from \eqref{FHD0} and \eqref{TFCONV}. 
\end{itemize}

Consequently, we can use \eqref{INT1}, \eqref{INT2}, \eqref{INT3} and \eqref{INT4} to pass to the limit in \eqref{VFP2} to obtain the following limit variational formulation 
\begin{equation} \label{limitP}
\int_{\Omega_0} \xi^* \, \phi'(x_1) \, dx_1 dx_2 + \int_0^1 p(x) \, u_0(x) \, \phi(x) \, dx  
= \int_0^1 \hat f(x) \, \phi(x) \, dx,
\end{equation}
for all $\phi \in H^1(0,1)$. 

Next we need to evaluate the relationship between functions $\xi^*$ and $u_0$ to complete our proof obtaining the limit problem \eqref{VFPDL-piecewise0}.

\par\bigskip\noindent {\bf  (f). Relationship between $\xi^*$ and $u_0$.}

First let us to denote by $\Omega$ the rectangle $\Omega = (0,1) \times (-G_1,H_1)$, and recall the oscillating regions $\Omega^\epsilon_{i,+}$ given by
$$
\Omega^\epsilon_{i,+} = \left\{ (x_1, x_2) \, | \, \xi_{i-1} < x_1 < \xi_i, \, -G_{0,i} < x_2 < H_i(x_1/\epsilon) \right\}, \quad i=1,\ldots,N.
$$
Here we are taking the positive constants $G_1$ and $H_1$ from hypothesis ${\bf (H)}$, and $G_{0,i}$ is defined in \eqref{G0}. 
We also consider the families of isomorphisms 
$T^{\epsilon}_k : A^{\epsilon}_k \mapsto Y$ given by
\begin{equation} \label{ISO}
T^{\epsilon}_k(x_1,x_2) = \left(\frac{x_1 - \epsilon k l_h}{\epsilon},x_2\right)
\end{equation}
where 
$$
\begin{gathered}
A^{\epsilon}_k = \{ (x_1,x_2) \in \R^2 \; | \;  
\epsilon  k l_h \leq x_1 < \epsilon l_h (k+1)  \textrm{ and } -G_1 < x_2 < H_1 \} \\
Y = (0, l_h) \times (-G_1, H_1)
\end{gathered}
$$
with $k \in \N$.  
Let us recall the auxiliary problem in the representative cell $Y^*_i$ 
\begin{equation} \label{AUX0}
\left\{
\begin{array}{l}
- \Delta X_i  =  0  \textrm{ in } Y^*_i \\
\frac{\partial X_i}{\partial N}  =  0  \textrm{ on } B_2^i  \\
\frac{\partial X_i}{\partial N} 
= - \frac{H_i'(y_1)}{\sqrt{1+H_i'(y_1)^2}} \textrm{ on } B_1^i  \\
 X_i \textrm{ $l_h$-periodic on } B_0^i \\
  \int_{Y^*_i} X_i \; dy_1 dy_2 = 0  
 \end{array}
\right.
\end{equation}
where $B_0^i$, $B_1^i$ and $B_2^i$ are the lateral, upper and lower boundary of $\partial Y^*_i$ respectively.

Applying the same reflection procedure used in Lemma \ref{EOT}, we can define the extension operators 
\begin{equation} \label{PI}
P^i \in \mathcal{L}(H^1(Y^*_i),H^1(Y)) \cap \mathcal{L}(L^2(Y^*_i),L^2(Y)),
\end{equation}
which are obtained by reflection in the negative direction along the line $x_2=-G_{i,0}$, and in the positive direction along the graph of function $H_i$, as indicated in Remark \ref{rem:stoperator}. 

Thus, taking the isomorphism (\ref{ISO}) and extension operator \eqref{PI}, we can set the function
\begin{eqnarray*}
\omega^{\epsilon} (x_1,x_2) & = & x_1 - \epsilon \Big(  P^iX_i \circ T^\epsilon_k (x_1,x_2) \Big) \\
& = & x_1 - \epsilon \Big( P^i X_i \left( \frac{x_1 - \epsilon l_h k}{\epsilon},x_2 \right) \Big),  \hbox{ for } (x_1,x_2)\in \Omega_i\cap A^\epsilon_k, \quad i=1,\ldots, N,
\end{eqnarray*}
where 
$$
\Omega_i = (\xi_{i-1},\xi_i) \times (-G_1,H_1).
$$ 
Clearly function $\omega^{\epsilon}$ is well defined in  $\cup_{i=1}^N \Omega_i$. If $(x_1,x_2) \in \Omega_i$ for some $i=1,\ldots,N$, then there exists a unique $k\in \N$ such that $(x_1,x_2)\in A^\epsilon_k$. Furthermore, we have 
$$\omega^\epsilon \in H^1(\cup_{i=1}^N \Omega_i).$$

We introduce now the vector $\eta^\epsilon = (\eta_1^\epsilon,\eta_2^\epsilon)$ defined by 
\begin{equation} \label{VETA}
\eta_r^\epsilon(x_1,x_2) = \frac{\partial \omega^\epsilon}{\partial x_r}(x_1,x_2),  \quad (x_1,x_2)\in \cup_{i=1}^N \Omega_i, \quad r=1,2.
\end{equation}
Since $\frac{\partial}{\partial x_1} = \frac{1}{\epsilon} \frac{\partial}{\partial y_1}$ 
and $\frac{\partial}{\partial x_2} = \frac{\partial}{\partial y_2}$, we have that
\begin{equation} \label{ETA1EX}
\begin{gathered}
\eta_1^\epsilon(x_1,x_2)= 1 - \frac{\partial X_i}{\partial y_1}\left(\frac{x_1 - \epsilon k L}{\epsilon},x_2\right)
= 1 - \frac{\partial X_i }{\partial y_1}\left(\frac{x_1}{\epsilon},x_2\right) := \eta_1(y_1,y_2), \\ 
\eta_2^\epsilon(x_1,x_2)= - \epsilon \frac{\partial X_i }{\partial y_2}\left(\frac{x_1 - \epsilon k L}{\epsilon},x_2\right)
= - \epsilon \frac{\partial X_i }{\partial y_2}\left(\frac{x_1}{\epsilon},x_2\right)
:= \eta_2(y_1,y_2), 
\end{gathered}
\end{equation}
for $(y_1,y_2) = (\frac{x_1 - \epsilon k L}{\epsilon},x_2) \in Y^*_i$, $(x_1,x_2) \in \Omega_{i,+}^\epsilon$, $i=1,\ldots,N$.

Then, performing standard computations, we get from \eqref{AUX0} that $\eta_1^\epsilon$ and $\eta_2^\epsilon$ satisfy 
\begin{equation}\label{DIV}
\begin{gathered}
\frac{\partial \eta_1^\epsilon}{\partial {x_1}} 
+ \frac{1}{\epsilon^2} \frac{\partial \eta_2^\epsilon}{\partial{x_2}}   =  0 \textrm{ in }  \Omega^\epsilon_{i,+}, \\ 
\eta_1^\epsilon N^\epsilon_1 
+ \frac{1}{\epsilon^2} \eta_2^\epsilon N^\epsilon_2 = 0 \textrm{ on } \left(x_1, H_i \left(\frac{x_1}{\epsilon} \right) \right),\,  \\
\eta_1^\epsilon N^\epsilon_1 
+ \frac{1}{\epsilon^2} \eta_2^\epsilon N^\epsilon_2 = 0 \textrm{ on } (x_1, -G_{0,i}),
\end{gathered}
\end{equation}
for each $i=1,\ldots,N$, where 
$$
\begin{gathered}
N^\epsilon = (N^\epsilon_1, N^\epsilon_2) = \left(
 - \frac{H_i'(\frac{x_1}{\epsilon})}{(\epsilon^2+{H_i'(\frac{x_1}{\epsilon})}^2)^{\frac{1}{2}}}, 
 \frac{\epsilon}{ (\epsilon^2+{H_i'(\frac{x_1}{\epsilon})}^2)^{\frac{1}{2}}} \right) 
 \textrm{ on } (x_1, H_i \Big(\frac{x_1}{\epsilon} \Big) ),  \\
N^\epsilon = ( 0 , -1) \textrm{ on } (x_1, -G_{0,i}). 
\end{gathered}
$$


Therefore, multiplying first equation of \eqref{DIV} by a test function $\psi \in H^1(\Omega)$ with $\psi=0$ in a neighborhood of set
$\cup_{i=0}^N \{ (\xi_i,x_2) \, | \,  -G_1 \leq x_2\leq H_1 \}$ and integrating by parts, 
we obtain
\begin{eqnarray*}
0 & = & \int_{\Omega^\epsilon_+} \psi \left( \frac{\partial \eta_1^\epsilon}{\partial {x_1}} 
+ \frac{1}{\epsilon^2} \frac{\partial \eta_2^\epsilon}{\partial{x_2}} \right) dx_1 dx_2 \\
& = &  \int_{\partial \Omega^\epsilon_+}  \psi \left( \eta_1^\epsilon N^\epsilon_1 + \frac{1}{\epsilon^2} \eta_2^\epsilon N^\epsilon_2 \right) dS 
- \int_{\Omega^\epsilon_+} \left(\frac{\partial \psi}{\partial x_1} \eta_1^\epsilon 
+ \frac{1}{\epsilon^2} \frac{\partial \psi}{\partial x_2} \eta_2^\epsilon \right) dx_1 dx_2 \\ 
& = & 0 - \int_{\Omega^\epsilon_+}  \left(\frac{\partial \psi}{\partial x_1}  \eta_1^\epsilon  
+ \dfrac{1}{\epsilon^2} \frac{\partial \psi}{\partial x_2}  \eta_2^\epsilon \right) dx_1 dx_2,
\end{eqnarray*}
where 
$$
\Omega^\epsilon_+ = \mbox{Int} \left( \overline{ \cup_{i=1}^N \, \Omega^\epsilon_{i,+} }\right).
$$ 
Then, for all $\psi \in H^1(\Omega)$ with $\psi=0$ in a neighborhood of 
$\cup_{i=0}^N \{ (\xi_i,x_2) \, | \,  -G_1 \leq x_2\leq H_1 \}$, 
\begin{equation}\label{diveq}
\int_{\Omega^\epsilon_+}  \left(\eta_1^\epsilon   \frac{\partial \psi}{\partial x_1}  
+  \eta_2^\epsilon \dfrac{1}{\epsilon^2} \frac{\partial \psi}{\partial x_2} \right) dx_1 dx_2=0.
\end{equation}
Consequently, we can rewrite the variational formulation \eqref{VFP} using identity \eqref{diveq} in 
\begin{eqnarray} \label{VFP3}
& & \int_{\widetilde \Omega^\epsilon}  \left\{ \widetilde{\frac{\partial u^\epsilon}{\partial x_1}} \frac{\partial \varphi}{\partial x_1} 
+ \frac{1}{\epsilon^2} \widetilde{\frac{\partial u^\epsilon}{\partial x_2}} \frac{\partial \varphi}{\partial x_2}  + \chi^\epsilon \, P_\epsilon u^\epsilon \, \varphi \right\} dx_1 dx_2  
- \int_{\Omega^\epsilon_+}  \left(\eta_1^\epsilon   \frac{\partial \psi}{\partial x_1}  
+  \eta_2^\epsilon \dfrac{1}{\epsilon^2} \frac{\partial \psi}{\partial x_2} \right) dx_1 dx_2 \nonumber \\ 
& & \qquad \qquad \qquad = \int_{\widetilde \Omega^\epsilon} \chi^\epsilon \, f^\epsilon \varphi dx_1 dx_2, \quad  \forall \varphi \in H^1(\Omega^\epsilon).
\end{eqnarray}

Now, in order to accomplish our goal, we will pass to the limit in \eqref{VFP3}. For this, we introduce a second class of suitable test functions which will allow us to get our limit problem.

Let $\phi= \phi(x) \in \mathcal{C}^\infty_0(\cup_{i=1}^{N} (\xi_{i-1},\xi_{i}))$ and consider the following test function  
\begin{equation} \label{TF2}
\varphi^\epsilon(x_1,x_2) = \left\{ 
\begin{array}{ll}
\phi(x_1) \, \omega^\epsilon(x_1,x_2), & (x_1,x_2) \in \widetilde \Omega_+^\epsilon  \\
Z^\epsilon_m(x_1,x_2), & (x_1,x_2) \in \widetilde \Omega^\epsilon_- \cap Q^\epsilon_m, \quad m=0,1,2,\ldots  
\end{array}
\right.
\end{equation}
where  $Q^\epsilon_m$ is the rectangle defined by the step function $G_0^\epsilon$ previously introduced in \eqref{RQ}, with $\widetilde \Omega_+^\epsilon$ and $\widetilde \Omega_-^\epsilon$ given in \eqref{DOMAINS}.
The function $Z^\epsilon_m$ here is the solution of the problem
\begin{equation} \label{AUXSOL2}
\left\{
\begin{array}{c}
- \frac{\partial^2 Z^\epsilon}{\partial x_1^2} - \frac{1}{\epsilon^2} \frac{\partial^2 Z^\epsilon}{\partial x_2^2} 
= 0, \quad \textrm{ in } Q^\epsilon_m \\
\frac{\partial Z^\epsilon}{\partial N^\epsilon}=0, \quad \textrm{ on }  \partial Q^\epsilon_m \backslash \Gamma_m^\epsilon  \\
Z^\epsilon = \phi \, \omega^\epsilon,  \quad \textrm{ on } \Gamma_m^\epsilon
\end{array}
\right. 
\end{equation}
where $\Gamma_m^\epsilon$ is the top of rectangle $Q^\epsilon_m$.
Hereafter we may use notation $Z^\epsilon(x_1,x_2) = Z^\epsilon_m(x_1,x_2)$ whenever $(x_1,x_2) \in \widetilde \Omega^\epsilon_- \cap Q^\epsilon_m$.
Moreover, we observe that $\phi \, \omega^\epsilon|_{\Gamma_m^\epsilon} \in H^1(\Gamma_m^\epsilon)$, and auxiliary problems \eqref{AUXSOL} and \eqref{AUXSOL2} just differ by the condition on the top border $\Gamma_m^\epsilon$.

Now, let us to pass to the limit in functions $\omega^\epsilon$ and $\eta_1^\epsilon$.
Due to definition of $\omega_\epsilon$, we have for each $i=1,\ldots,N$, 
$$
\int_{A^\epsilon_k\cap \Omega_i} |\omega^\epsilon - x_1|^2 dx_1 dx_2 
= \int_{Y} \epsilon^3 |(P^i X_i)(y_1,y_2)|^2 dy_1 dy_2 \le \int_{Y^*_i} C \epsilon^3 |X_i(y_1,y_2)|^2 dy_1 dy_2
$$
and so, 
$$
\begin{gathered}
\int_{\Omega_i} |\omega^\epsilon - x_1|^2 dx_1 dx_2 
\approx  \sum_{k=1}^{\frac{C}{\epsilon l_h}}  \int_{Y^*_i} C \epsilon^3 |X_i(y_1,y_2)|^2 dy_1 dy_2 \\
\approx  \epsilon^2 \int_{Y^*_i} C |X_i(y_1,y_2)|^2 dy_1 dy_2 \rightarrow 0 \textrm{ as } \epsilon \to 0.
\end{gathered}
$$
Analogously,
\begin{eqnarray*}
\int_{A^\epsilon_k\cap \Omega_i} \Big|\frac{\partial }{\partial x_1} \left( \omega^\epsilon - x_1 \right) \Big|^2 dx_1 dx_2 
& = & \int_{Y}  \Big| \frac{\partial (P^i X_i)}{\partial y_1} (y_1,y_2) \Big|^2 \, \epsilon \, dy_1 dy_2 \\
& \le & \epsilon \int_{Y^*_i} C \Big|\frac{\partial X_i}{\partial y_1}(y_1,y_2)\Big|^2 dy_1 dy_2 
\end{eqnarray*}
and
\begin{eqnarray*}
\int_{A^\epsilon_k\cap \Omega_i} \Big|\frac{\partial }{\partial x_2} \left( \omega^\epsilon - x_1 \right) \Big|^2 dx_1 dx_2 
& = & \int_{Y}  \epsilon^3 \Big|\frac{\partial (P^i X_i)}{\partial y_2} (y_1,y_2)\Big|^2 \,  dy_1 dy_2 \\
& \le & \epsilon^3 \int_{Y^*_i}  C \Big|\frac{\partial X_i}{\partial y_2}(y_1,y_2)\Big|^2 dy_1 dy_2.
\end{eqnarray*}
Therefore
$$
\begin{gathered}
\int_{\Omega_i} \Big|\frac{\partial }{\partial x_1} \left( \omega^\epsilon - x_1 \right) \Big|^2 dx_1 dx_2 
\approx  \sum_{k=1}^{\frac{C}{\epsilon l_h}}  \epsilon \int_{Y^*_i} C\Big|\frac{\partial X_i}{\partial y_1}(y_1,y_2) \Big|^2 dy_1 dy_2 \\
\approx  \int_{Y^*_i} \tilde{C} \Big|\frac{\partial X_i}{\partial y_1}(y_1,y_2)\Big|^2 dy_1 dy_2 
\end{gathered}
$$
for all $\epsilon > 0$ and
$$
\begin{gathered}
\int_{\Omega_i} \Big|\frac{\partial }{\partial x_2} \left( \omega^\epsilon - x_1 \right) \Big|^2 dx_1 dx_2 
\le  \epsilon^2 \int_{Y^*_i} \tilde{C} \Big|\frac{\partial X_i}{\partial y_2}(y_1,y_2)\Big|^2 dy_1 dy_2 \to 0 \textrm{ as } \epsilon \to 0.
\end{gathered}
$$

Consequently, we can conclude for $\epsilon \to 0$
\begin{equation} \label{OMEGAL}
\omega^\epsilon \to x_1 \quad s-L^2(\Omega) \quad \textrm{ and } \quad 
w-H^1(\Omega_i), \quad i=1,\ldots,N,
\end{equation}
and
\begin{equation} \label{OMEGAE2}
\frac{\partial \omega^\epsilon}{\partial x_2} \to 0 \quad s-L^2(\Omega).
\end{equation}
In particular, $\omega^\epsilon$ is uniformly bounded in $H^1(\cup_{i=1}^N \Omega_i)$ for all $\epsilon>0$.

Next let $\widetilde{\eta}^\epsilon = \eta^\epsilon \chi_0$ be the extension by zero
of vector $\eta^\epsilon$ to the region $\Omega_0$ independent of $\epsilon$.
Since $X_i$ is $l_h$-periodic at variable $y_1$, we can apply the Average Theorem to \eqref{ETA1EX} obtaining
$$
\widetilde{\eta}_1^\epsilon(x_1,x_2) \rightharpoonup \frac{1}{l_h} \int_0^{l_h}  
\Big( 1 - \frac{\partial X_i}{\partial y_1}  (s,x_2) \Big) \chi_i(s,x_2)ds :=  \hat q_i(x_2),
\quad w^*-L^\infty(\xi_{i-1},\xi_i),
$$
where $\chi_i$ is the characteristic function of $Y^*_i$. 
Hence, we can argue as \eqref{chi} to get 
\begin{equation} \label{ETA}
\widetilde{\eta}_1^\epsilon \rightharpoonup \hat q, \quad w^*-L^\infty(\Omega_0),
\end{equation}
where $\hat q(x_1,x_2) \equiv \hat q_i(x_2)$, if $(x_1,x_2)\in \Omega_ i$, for $i=1,\ldots,N$.

Now we evaluate the test functions $\varphi^\epsilon$ as $\epsilon \to 0$.
It follows from estimate (\ref{basic-estimate}) that 
\begin{equation} \label{ESTX02}
\left\| \frac{\partial Z_m^\epsilon}{\partial x_1} \right\|^2_{L^2(Q^\epsilon_m)}
+ \frac{1}{\epsilon^2} \left\| \frac{\partial Z_m^\epsilon}{\partial x_2} \right\|^2_{L^2(Q^\epsilon_m)}
\leq C \eps^{\alpha-1} \left\|  \frac{\partial (\phi \, \omega^\epsilon)}{\partial x_1}  \right\|^2_{L^2(\Gamma_m^\epsilon)}.
\end{equation}
Denoting  $Q^\epsilon = \cup_{i=1}^{N_\epsilon} Q^\epsilon_m$, we have $\Omega^\epsilon_+= Q^\epsilon \cap \Omega^\epsilon$, and so, due to \eqref{TF2}, \eqref{OMEGAL} and \eqref{ESTX02}, 
\begin{equation} \label{ESTX2}
\begin{array}{l}
\displaystyle \left\| \frac{\partial \varphi^\epsilon}{\partial x_1} \right\|^2_{L^2(\Omega^\epsilon_-)}
+ \frac{1}{\epsilon^2} \left\| \frac{\partial \varphi^\eps}{\partial x_2} \right\|^2_{L^2(\Omega^\epsilon_-)}
= \sum^{m_\epsilon}_{m=0} \left( 
\left\| \frac{\partial \varphi^\epsilon}{\partial x_1} \right\|^2_{L^2(Q^\eps_m)}
+ \frac{1}{\epsilon^2} \left\| \frac{\partial \varphi^\epsilon}{\partial x_2} \right\|^2_{L^2(Q^\epsilon_m)}
\right) \\
\displaystyle \qquad \quad \leq C \, \epsilon^{\alpha -1} \, \max \left\{ \left\| \phi \right\|^2_\infty, \left\| \phi' \right\|^2_{\infty} \right\} \left\|  \omega^\epsilon  \right\|^2_{H^1(\cup_{i=1}^N \Omega_i)}  \\
\displaystyle \qquad \quad  \leq \widetilde C \, \epsilon^{\alpha -1},
\end{array}
\end{equation}
for some $\widetilde C>0$ independent of $\epsilon$.
Consequently, we can argue as in \eqref{L2CONV} to show 
\begin{equation} \label{TFCONV22}
\| \varphi^\epsilon - x_1 \, \phi \|_{L^2(\widetilde \Omega^\epsilon)} \to 0 \textrm{ as } \epsilon \to 0.
\end{equation}

Indeed, for $(x_1,x_2) \in \{ (x_1,x_2) \, | \, \gamma_{m,\epsilon}<x_1<\gamma_{m+1,\epsilon}, \, - G_\epsilon(x_1) < x_2 < H_1 \}$,
$$
\varphi^\epsilon(x_1,x_2) - \phi(x_1) \, \omega^\epsilon(x_1, -w^\epsilon_m) = \varphi^\epsilon(x_1,x_2) - \varphi^\epsilon(x_1,-w^\epsilon_m) 
= \int_{-w^\epsilon_m}^{x_2} \frac{\partial \varphi^\epsilon}{\partial x_2}(x_1,s) \, ds,
$$
where $w_m^\epsilon$ is the constant given by the step function $G^\epsilon_0$ in $(\gamma_{m,\epsilon}, \gamma_{m+1,\epsilon})$, that is, 
$$
w_m^\epsilon = G^\epsilon_0(x), \quad  \textrm{ for } x \in (\gamma_{m,\epsilon}, \gamma_{m+1,\epsilon}).
$$ 
Hence, if $\Gamma^\epsilon \subset \R^2$ is the graph of $-G_0^\epsilon$, we have $\varphi^\epsilon|_{\Gamma^\epsilon} = \varphi^\epsilon(x_1,-w^\epsilon_m) = \phi(x_1) \, \omega^\epsilon(x_1, -w^\epsilon_m)$ for $x_1 \in (\gamma_{m,\epsilon}, \gamma_{m+1,\epsilon})$, and so
\begin{eqnarray} \label{eq:SL0}
\int_{\widetilde \Omega^\epsilon} |\varphi^\epsilon -  \varphi^\epsilon|_{\Gamma^\epsilon} |^2 dx_1 dx_2 & \leq & \sum_{m=0}^{m_\epsilon} \int_{\gamma_{m,\epsilon}}^{\gamma_{m+1,\epsilon}}  \int_{-G_\epsilon(x_1)}^{H_1}  |x_2+w_m^\epsilon| \int_{-w_m^\epsilon}^{x_2} \left| \frac{\partial \varphi^\epsilon}{\partial x_2}(x_1,s) \right|^2 ds dx_2 dx_1 \nonumber \\
&  \leq & |H_1+G_1|^2 \int_0^1  \int_{-G_\epsilon(x_1)}^{H_1} \left| \frac{\partial \varphi^\epsilon}{\partial x_2}(x_1,s) \right|^2 ds dx_1 \nonumber
\\
& \leq & |H_1+G_1|^2 \left\| \frac{\partial \varphi^\epsilon}{\partial x_2} \right\|^2_{L^2(\widetilde \Omega^\epsilon)}.
\end{eqnarray}

On the other hand, 
\begin{eqnarray} \label{eq:SL00}
\int_{\widetilde \Omega^\epsilon} |\phi \, \omega^\epsilon -  \varphi^\epsilon|_{\Gamma^\epsilon} |^2 dx_1 dx_2 & \leq & \int_{\widetilde \Omega^\epsilon} |\phi \left( \omega^\epsilon -  \omega^\epsilon|_{\Gamma^\epsilon} \right)|^2 dx_1 dx_2 \nonumber \\ 
& \leq & |H_1+G_1|^2 \| \phi \|_\infty \left\| \frac{\partial \omega^\epsilon}{\partial x_2} \right\|^2_{L^2(\Omega)}.
\end{eqnarray}
Then, it follows from \eqref{eq:SL0} and \eqref{eq:SL00} that there exist $C>0$ independent of $\epsilon$ such that
\begin{eqnarray} \label{SLA}
\| \varphi^\epsilon - x_1 \, \phi \|_{L^2(\widetilde \Omega^\epsilon)}^2  & \leq & \| \varphi^\epsilon - \varphi^\epsilon|_{\Gamma^\epsilon} \|^2_{L^2(\widetilde \Omega^\epsilon)} + \| \varphi^\epsilon|_{\Gamma^\epsilon} - \phi \omega^\epsilon \|^2_{L^2(\widetilde \Omega^\epsilon)} + \| \phi \omega^\epsilon - x_1 \phi \|_{L^2(\widetilde \Omega^\epsilon)} \nonumber \\
& \leq & C \left\{ \left\| \frac{\partial \varphi^\epsilon}{\partial x_2} \right\|^2_{L^2(\widetilde \Omega^\epsilon)} + \left\| \frac{\partial \omega^\epsilon}{\partial x_2} \right\|^2_{L^2(\Omega)} +  \| \omega^\epsilon - x_1 \|_{L^2(\Omega)}\right\}.
\end{eqnarray}

Hence, we can conclude \eqref{TFCONV22} from \eqref{TF2}, \eqref{OMEGAL}, \eqref{OMEGAE2}, \eqref{ESTX2} and \eqref{SLA}.

Now, we are in condition to pass to the limit in \eqref{VFP3}. Taking as test functions $\varphi = \varphi^\epsilon$ and $\psi = \phi \, u^\epsilon$ in \eqref{VFP3}, we get
\begin{eqnarray}
& & \int_{\widetilde \Omega^\epsilon} \chi^\epsilon f^\epsilon \varphi^\epsilon \, dx_1 dx_2 \nonumber \\
& = & 
\int_{\widetilde \Omega^\epsilon} \Big\{ \widetilde{\frac{\partial u^\epsilon}{\partial x_1}} \frac{\partial \varphi^\epsilon}{\partial x_1}   + \frac{1}{\epsilon^2} \widetilde{\frac{\partial u^\epsilon}{\partial x_2}} \frac{\partial \varphi^\epsilon}{\partial x_2}  
+ \chi^\epsilon P_\epsilon u^\epsilon \varphi^\epsilon \Big\} dx_1 dx_2  \nonumber \\
& & \quad - \int_{\Omega_+^\epsilon} \Big\{  \eta^\epsilon_1 \frac{\partial (\phi u^\epsilon)}{\partial x_1} + \frac{1}{\epsilon^2} \eta^\epsilon_2 \frac{\partial (\phi u^\epsilon)}{\partial x_2} \Big\} dx_1 dx_2 \nonumber \\
& = & 
\int_{\widetilde \Omega^\epsilon_+} \Big\{ \widetilde{\frac{\partial u^\epsilon}{\partial x_1}} \phi'\omega^\epsilon + \phi \widetilde{\frac{\partial u^\epsilon}{\partial x_1}} \frac{\partial \omega^\epsilon}{\partial x_1}  + 
\frac{1}{\epsilon^2} \phi \widetilde{\frac{\partial u^\epsilon}{\partial x_2}} \frac{\partial \omega^\epsilon}{\partial x_2} \Big\} dx_1 dx_2 \nonumber \\
&  & + \int_{\widetilde \Omega^\epsilon_- } \Big\{ \widetilde{\frac{\partial u^\epsilon}{\partial x_1}} \frac{\partial \varphi^\epsilon}{\partial x_1}+ \frac{1}{\epsilon^2} \widetilde{\frac{\partial u^\epsilon}{\partial x_2}} \frac{\partial \varphi^\epsilon}{\partial x_2} \Big\} dx_1 dx_2
+ \int_{\widetilde \Omega^\epsilon} \chi^\epsilon {P_\epsilon u^\epsilon} \varphi^\epsilon \, dx_1 dx_2\nonumber \\
& & -  \int_{\Omega^\epsilon_+} \Big\{ {\eta_{1}^\epsilon} \phi'  u^\epsilon + {\eta_{1}^\epsilon} \phi  \frac{\partial u^\epsilon}{\partial x_1} 
+ \frac{1}{\epsilon^2}  {\eta_{2}^\epsilon} \phi \frac{\partial u^\epsilon}{\partial x_2}  \Big\} dx_1 dx_2. \label{Contas}
\end{eqnarray}
Consequently, due to \eqref{Contas}, \eqref{VETA} and $\Omega^\epsilon_+ \subset \widetilde \Omega^\epsilon_+$, we can rewrite \eqref{VFP3} as 
\begin{eqnarray} \label{VFP4}
& & \int_{\widetilde \Omega^\epsilon_+}  \widetilde{\frac{\partial u^\epsilon}{\partial x_1}} \, \omega^\epsilon \, \phi' \, dx_1 dx_2 + 
\int_{\widetilde \Omega^\epsilon_-}  \left\{ \widetilde{\frac{\partial u^\epsilon}{\partial x_1}} \frac{\partial \varphi^\epsilon}{\partial x_1} 
+ \frac{1}{\epsilon^2} \widetilde{\frac{\partial u^\epsilon}{\partial x_2}} \frac{\partial \varphi^\epsilon}{\partial x_2}  \right\} dx_1 dx_2 + \int_{\widetilde \Omega^\epsilon} \chi^\epsilon \, P_\epsilon u^\epsilon \, \varphi^\epsilon \, dx_1 dx_2 \nonumber \\
& &  \qquad  - \int_{\Omega^\epsilon_+}  \eta_1^\epsilon  \phi' \, u^\epsilon \,  dx_1 dx_2  = \int_{\widetilde \Omega^\epsilon} \chi^\epsilon \, f^\epsilon \varphi^\epsilon dx_1 dx_2, \quad  \forall \phi \in \mathcal{C}^\infty_0(\cup_{i=1}^{N} (\xi_{i-1},\xi_{i})). 
\end{eqnarray}

Let us now to evaluate \eqref{VFP4} when $\epsilon$ goes to zero.
\begin{itemize}
\item First integrand: we claim
\begin{equation} \label{INT12}
\int_{\widetilde \Omega^\epsilon_+}  \widetilde{\frac{\partial u^\epsilon}{\partial x_1}} \, \omega^\epsilon \, \phi' \, dx_1 dx_2 \to \int_{\Omega_0} \xi^* x_1 \phi' \, dx_1 dx_2, \quad \textrm{ as } \epsilon \to 0.
\end{equation}
Notice $\Omega_0 \subset \widetilde \Omega^\epsilon_+$, and so,
\begin{eqnarray*}
\int_{\widetilde \Omega^\epsilon_+}  \widetilde{\frac{\partial u^\epsilon}{\partial x_1}} \, \omega^\epsilon \, \phi' \, dx_1 dx_2 
& = & \int_{\Omega_0}  \widetilde{\frac{\partial u^\epsilon}{\partial x_1}} \, \omega^\epsilon \, \phi' \, dx_1 dx_2 
+ \int_{\widetilde \Omega^\epsilon_+ \setminus \Omega_0}  \widetilde{\frac{\partial u^\epsilon}{\partial x_1}} \, \omega^\epsilon \, \phi' \, dx_1 dx_2.
\end{eqnarray*}
Due to \eqref{WC0} and \eqref{OMEGAL}, it is easy to see 
$
\int_{\Omega_0}  \widetilde{\frac{\partial u^\epsilon}{\partial x_1}} \, \omega^\epsilon \, \phi' \, dx_1 dx_2 \to \int_{\Omega_0} \xi^* x_1 \phi' dx_1 dx_2.
$
On the other hand, it follows from \eqref{EST0}, \eqref{Omega0}, \eqref{G_0C}, \eqref{DOMAINS} and \eqref{OMEGAL} that 
\begin{eqnarray*}
\int_{\widetilde \Omega^\epsilon_+ \setminus \Omega_0}  \left| \widetilde{\frac{\partial u^\epsilon}{\partial x_1}} \, \omega^\epsilon \, \phi' \right| dx_1 dx_2
& \leq & \left\|\frac{\partial u^\epsilon}{\partial x_1} \right\|_{L^2(\Omega^\epsilon)} \| \phi' \omega^\epsilon \|_{L^2(\widetilde \Omega^\epsilon_+ \setminus \Omega_0)} \\
& \leq & \| u^\epsilon \|_{H^1(\Omega^\epsilon)}\| \omega^\epsilon \|_{H^1(\cup_i \widetilde \Omega^\epsilon_i)} \| \phi' \|_\infty^2 \left| \widetilde \Omega^\epsilon_+ \setminus \Omega_0 \right|^{1/2} \\
& \to & 0, \quad \textrm{ as } \epsilon \to 0,
\end{eqnarray*}
proving \eqref{INT12}.

\item Second integrand: we have   
\begin{equation} \label{INT22}
\int_{\widetilde \Omega^\epsilon_-} \Big\{ \widetilde{\frac{\partial u^\epsilon}{\partial x_1}} \frac{\partial \varphi^\eps}{\partial x_1} 
+ \frac{1}{\epsilon^2} \widetilde{\frac{\partial u^\epsilon}{\partial x_2}} \frac{\partial \varphi^\eps}{\partial x_2} \Big\} dx_1 dx_2
\to 0, \quad \textrm{ as } \epsilon \to 0.
\end{equation}
Indeed, it follows from estimates \eqref{ESTX2} and \eqref{EST0} that there exists $C>0$ such that 
\begin{eqnarray*} \label{eqd1}
& & \qquad \qquad\qquad \left|\int_{\widetilde \Omega^\epsilon_-} \left\{ \widetilde{\frac{\partial u^\epsilon}{\partial x_1}} \frac{\partial \varphi^\epsilon}{\partial x_1} 
+ \frac{1}{\epsilon^2} \widetilde{\frac{\partial u^\epsilon}{\partial x_2}} \frac{\partial \varphi^\epsilon}{\partial x_2} \right\} dx_1 dx_2\right| \nonumber  
\\
& & \quad \leq  \left( \int_{\Omega^\epsilon} \left\{ \left( \frac{\partial u^\epsilon}{\partial x_1} \right)^2
+ \frac{1}{\epsilon^2} \left( \frac{\partial u^\epsilon}{\partial x_2} \right)^2 \right\} dx_1 dx_2 \right)^{1/2}
\left( \int_{\widetilde \Omega^\epsilon_-} \left\{ \left( \frac{\partial \varphi^\epsilon}{\partial x_1} \right)^2
+ \frac{1}{\epsilon^2} \left( \frac{\partial \varphi^\epsilon}{\partial x_2} \right)^2 \right\} dx_1 dx_2 \right)^{1/2} \nonumber \\
& & \quad \leq  C \, \epsilon^{(\alpha - 1)/2} \to 0, \textrm{ as } \eps \to 0, 
\end{eqnarray*}
since $\alpha > 1$.

\par\medskip 
\item Third integrand: if $p(x)$ is that one defined in \eqref{def-p}, then
\begin{equation} \label{INT32}
\int_{\widetilde \Omega^\epsilon} \chi^\epsilon \, P_\epsilon u^\epsilon \, \varphi^\epsilon \, dx_1 dx_2 \to \int_0^1 p(x) \, u_0(x) \, x \phi(x) \, dx, \quad \textrm{ as } \epsilon \to 0.
\end{equation}
In fact, we can proceed as in \eqref{INT3}, since we have \eqref{L2CONV}, \eqref{TFCONV22}, $P_\epsilon u^\epsilon |_{\Omega^\epsilon} = u^\epsilon$, and 
\begin{eqnarray*}
\int_{\widetilde \Omega^\epsilon} \chi^\epsilon \, P_\epsilon u^\epsilon \, \varphi^\epsilon \, dx_1 dx_2 & = & 
\int_{\Omega^\epsilon} \left( u^\epsilon - u_0 \right) \, \varphi^\epsilon \, dx_1 dx_2
+ \int_{\Omega^\epsilon} u_0 \, \left( \varphi^\epsilon - x_1 \phi \right) \, dx_1 dx_2 \\
& &  \qquad + \int_{\Omega^\epsilon} u_0 \, x_1 \phi \, dx_1 dx_2.
\end{eqnarray*}

\item Fourth integrand:  Due to \eqref{L2CONV} and \eqref{ETA}, we can easily obtain  
\begin{equation} \label{INT42}
\int_{\Omega^\epsilon_+} \eta_1^\epsilon \, \phi' \, u^\epsilon \, dx_1 dx_2 \to \int_{\Omega_0} \hat q \, \phi' \, u_0 \, dx, \quad \textrm{ as } \epsilon \to 0,
\end{equation}
since $\Omega^\epsilon_+ \subset \Omega_0$, and 
$$
\int_{\Omega^\epsilon_+} \eta_1^\epsilon \, \phi' \, u^\epsilon \, dx_1 dx_2 = 
\int_{\Omega_0} \widetilde \eta_1^\epsilon \, \phi' \, P_\epsilon u^\epsilon \, dx_1 dx_2.
$$

\item Fifth integrand: we have 
\begin{equation} \label{INT52}
\int_{\widetilde \Omega^\epsilon} \chi^\epsilon \, f^\epsilon \, \varphi^\epsilon \, dx_1 dx_2 \to \int_0^1  \hat f(x) \, x \phi(x) \, dx, \quad \textrm{ as } \epsilon \to 0,
\end{equation}
which is derived from \eqref{FHD0} and \eqref{TFCONV22} in the same way that \eqref{INT4}.
\end{itemize}

Therefore, due to convergences obtained in \eqref{INT12}, \eqref{INT22}, \eqref{INT32}, \eqref{INT42} and \eqref{INT52}, we can pass to the limit in \eqref{VFP4} getting the following relation
\begin{equation} \label{LR}
\int_{\Omega_0} \xi^* \, x_1 \phi' \, dx_1 dx_2 + \int_0^1 p \, u_0 \, x \phi \, dx 
- \int_{\Omega_0} \hat q \, \phi' \, u_0 \, dx_1 dx_2 = \int_0^1 \hat f x \phi \, dx,
\end{equation}
for all $\phi \in \mathcal{C}^\infty_0(\cup_{i=1}^{N} (\xi_{i-1},\xi_{i}))$ where the step functions $p$ and $\hat q$ are given in \eqref{def-p} and \eqref{ETA} respectively by
\begin{equation} \label{SL123}
\begin{gathered}
p(x) = p_i =\frac{|Y_i^*|}{l_h} + \frac{1}{l_g} \int_0^{l_g} G_i(s) \, ds - G_{0,i},  \\
G_{0,i} = \min_{y \in \R} G_i(y), \\
\hat q(x, y) = \hat q_i(y) = \frac{1}{l_h} \int_0^{l_h}  
\Big( 1 - \frac{\partial X_i}{\partial y_1}  (s,y) \Big) \chi_i(s,y) \, ds,
\end{gathered}
\quad x \in (\xi_{i-1},\xi_i),
\end{equation}
for $i=1,\ldots,N$.
Thus, if we take $x_1 \phi(x_1)$ as a test function in \eqref{limitP}, we obtain 
\begin{equation} \label{SL01}
\int_{\Omega_0} \xi^* \frac{\partial }{\partial x_1} \left( x_1 \phi(x_1) \right) \, dx_1 dx_2 + \int_0^1 p \, u_0 \, x \phi \, dx = \int_0^1 \hat f \, x \phi \, dx.
\end{equation}

Combining \eqref{LR} and \eqref{SL01}, we get 
\begin{equation} \label{LRFF}
\int_{\Omega_0} \left\{ \hat q \, \phi' \, u_0 + \phi \, \xi^* \right\} dx_1 dx_2 = 0, \quad \forall \phi \in \mathcal{C}^\infty_0(\cup_{i=1}^{N} (\xi_{i-1},\xi_{i})).
\end{equation}
Hence, integrating by parts we have $\int_{\Omega_0} \hat q \, \phi' \, u_0 \, dx_1 dx_2 = - \int_{\Omega_0} \hat q \, \frac{\partial u_0}{\partial x_1} \, \phi \, dx_1 dx_2$, and so, we obtain via iterated integration and \eqref{LRFF} that
\begin{equation} \label{RLL}
\sum_{i=1}^N \int_{\xi_{i-1}}^{\xi_i} \int_{-G_{0,i}}^{H_1} \left\{ \hat q_i(x_2) \, \frac{\partial u_0}{\partial x_1}(x_1) - \xi^*(x_1,x_2) \right\} \phi(x_1) \, dx_1 dx_2 = 0, 
\end{equation}
for all $\phi \in \mathcal{C}^\infty_0(\cup_{i=1}^{N} (\xi_{i-1},\xi_{i}))$.

Then, if we consider the step function $q:(0,1) \mapsto \R$, $q(x)=q_i$ if $x \in (\xi_{i-1},\xi_i)$ with  
$$
q_i = \frac{1}{l_h} \int_{Y^*_i}  \Big( 1 - \frac{\partial X_i}{\partial y_1}  (y_1,y_2) \Big)  dy_1 dy_2,
$$
it follows from \eqref{RLL} and \eqref{SL123} that 
$$
\int_0^1 \left\{ q(x_1) \, \frac{\partial u_0}{\partial x_1}(x_1) - \left( \int_{-G_0(x_1)}^{H_1} \xi^*(x_1,x_2) \, dx_2 \right) \right\} \phi(x_1) \, dx_1 = 0, \quad \forall \phi \in \mathcal{C}^\infty_0(\cup_{i=1}^{N} (\xi_{i-1},\xi_{i})),
$$
where $G_0(x) = G_{0,i}$ if $x \in (\xi_{i-1},\xi_i)$.
Therefore, 
\begin{equation}\label{xi=q}
\int_{-G_0(x_1)}^{H_1}\xi^*(x_1,x_2) \, dx_2 = q(x_1) \frac{\partial u_0(x_1)}{\partial x_1},\quad \hbox{ a.e. } x_1 \in (0,1).
\end{equation}

Finally, since $\int_{\Omega_0} \xi^*(x_1,x_2) \, \phi'(x_1) \, dx_1 dx_2 = \int_0^1 \left( \int_{-G_0(x_1)}^{H_1} \xi^*(x_1,x_2) \, dx_2 \right) \phi'(x_1) \, dx_1$, we can plug this last equality \eqref{xi=q} in \eqref{limitP} getting our limit problem \eqref{VFPDL-piecewise0} write here as
$$
\sum_{i=1}^N\int_{\xi_{i-1}}^{\xi_i} \left\{ q_i \frac{\partial u_0}{\partial x_1}\frac{\partial \phi}{\partial x_1} 
+ p_i \, u_0 \, \phi \right\} dx_1 = \int_0^1  \hat f \, \phi \, dx_1,\quad \forall \phi \in H^1(0,1).
$$
\end{proof}


\section{The general homogenized limit} \label{GenPro}

Now we are in condition to get our main result concerned to the elliptic equation \eqref{EP} under hypothesis {\bf (H)}. Using approximation arguments on functions $G_\epsilon$ and $H_\epsilon$, the boundary perturbation result given by Proposition \ref{BPT}, and Lemma \ref{PPCT}, we are able to accomplish our goal using techniques previously discussed in \cite{AP2, AP3, AVP}.

\begin{theorem} \label{ET}
Let $u^\epsilon$ be the  solution of \eqref{EP} with $f^\epsilon \in L^2(\Omega^\epsilon)$ satisfying condition \eqref{FC}, and assume that the function 
\begin{equation} \label{FHD}
\hat f^\epsilon(x) = \int_{-G_\epsilon(x)}^{H_\epsilon(x)} f^\epsilon(x,s) \, ds, \quad x \in (0,1),
\end{equation}
satisfies that $\hat f^\epsilon \rightharpoonup \hat f$, w-$L^2(0,1)$, as $\epsilon \to 0$. 

Then, there exists $\hat u \in H^1(0,1)$, such that, if $P_\eps$ is the extension operator introduced in Lemma \ref{EOT}, then  
\begin{equation} \label{EC}
\| P_{\epsilon} u^\epsilon - \hat u \|_{L^2(\widetilde \Omega^\epsilon)} \to 0, \quad \textrm{ as } \epsilon \to 0, 
\end{equation}
where $\hat u$ is the unique solution of the Neumann problem
\begin{equation} \label{VFPDL}
\int_0^1 \Big\{ q(x)  \, u_x(x) \, \varphi_x(x) 
+ p(x) \, u(x) \, \varphi(x) \Big\} dx = \int_0^1  \, \hat f(x) \, \varphi(x) \, dx
\end{equation}
for all $\varphi \in H^1(0,1)$, where 
\begin{equation} \label{RPFL}
\begin{gathered}
q(x) =  \frac{1}{l_h} \int_{Y^*(x)} \left\{ 1 - \frac{\partial X(x)}{\partial y_1}(y_1,y_2) \right\} dy_1 dy_2, \\ 
p(x) = \frac{|Y^*(x)|}{l_h} + \frac{1}{l_g} \int_0^{l_g} G(x,y) \, dy - G_0(x), \\
G_0(x) = \min_{y \in \R} G(x,y),
\end{gathered}
\end{equation}
and $X(x)$ is the unique solution of the problem 
\begin{equation} \label{AUXG}
\left\{
\begin{array}{l}
- \Delta X(x)  =  0  \textrm{ in } Y^*(x)  \\
\frac{\partial X(x)}{\partial N}  =  0  \textrm{ on } B_2(x)  \\
\frac{\partial X(x)}{\partial N}  =  N_1 \textrm{ on } B_1(x)  \\
X(x) \textrm{ $l_h$-periodic on } B_0(x) \\
\int_{Y^*(x)} X(x) \; dy_1 dy_2  =  0  
\end{array}
\right.
\end{equation}
in the representative cell $Y^*(x)$ given by
$$
Y^*(x) = \{ (y_1,y_2) \in \R^2 \; | \; 0< y_1 < l_h, \quad -G_0(x) < y_2 < H(x,y_1) \}, 
$$
$B_0(x)$ is the lateral boundary, $B_1(x)$ is the upper boundary and $B_2(x)$ is the lower
boundary of $\partial Y^*(x)$ for each $x \in (0,1)$.
\end{theorem}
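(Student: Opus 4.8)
The plan is to derive Theorem~\ref{ET} from the piecewise periodic case, Lemma~\ref{PPCT}, by an approximation argument in which the locally periodic profiles $G(x,\cdot)$ and $H(x,\cdot)$ are replaced, on a fine partition of $(0,1)$, by their ``frozen'' counterparts, the resulting error being controlled uniformly in $\epsilon$ through the boundary perturbation estimate of Proposition~\ref{BPT}. First I would fix a bounded extension $\bar f^\epsilon\in L^2(\R^2)$ of $f^\epsilon$ (say, by zero outside $\Omega^\epsilon$) so that Proposition~\ref{BPT} applies, and record that, since $G_\epsilon,H_\epsilon$ are bounded below by positive constants, the segment $\Gamma=(0,1)\times\{0\}$ and the fixed box $B=(0,1)\times(-G_0,H_0)$ around it lie in every $\Omega^\epsilon$, with $P_\epsilon u^\epsilon=u^\epsilon$ on $B$. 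By \eqref{EST0} and Lemma~\ref{EOT} the extensions $P_\epsilon u^\epsilon$ are bounded in $H^1$ on any fixed interior box, so, arguing as in part~(a) of the proof of Lemma~\ref{PPCT}, along a subsequence $\|P_\epsilon u^\epsilon-u_0\|_{L^2(\widetilde\Omega^\epsilon)}\to0$ with $u_0\in H^1(0,1)$; since $\|\partial_{x_2}P_\epsilon u^\epsilon\|_{L^2(\widetilde\Omega^\epsilon)}\le K\epsilon$ by \eqref{ESTPE}, this convergence is equivalent to $u^\epsilon|_\Gamma\to u_0$ in $L^2(\Gamma)$. As \eqref{VFPDL} has a unique solution $\hat u$ by Lax--Milgram (one checks $q(x)\ge q_{\min}>0$ and $p(x)=\tfrac1{l_h}\int_0^{l_h}H(x,y)\,dy+\tfrac1{l_g}\int_0^{l_g}G(x,y)\,dy\ge H_0+G_0$), it suffices to prove $u_0=\hat u$ for an arbitrary subsequential limit.

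For $\eta>0$ I would refine the partition $\{\xi_i\}$ from hypothesis~{\bf (H)} to $0=t_0<\cdots<t_M=1$ fine enough that, by the uniform bounds on $G_x,H_x$ on each $(\xi_{i-1},\xi_i)\times\R$, one has $|G(x,y)-G(t^\ast_j,y)|+|H(x,y)-H(t^\ast_j,y)|\le\eta$ for $x\in(t_{j-1},t_j)$ and all $y$, with $t^\ast_j\in[t_{j-1},t_j]$ a sample point in the relevant smooth piece. Freezing the first variable gives piecewise periodic profiles $G_\epsilon^\eta(x)=G(t^\ast_j,x/\epsilon^\alpha)$, $H_\epsilon^\eta(x)=H(t^\ast_j,x/\epsilon)$, still satisfying {\bf (H)} with $\epsilon$-independent constants and with $\|G_\epsilon-G_\epsilon^\eta\|_{L^\infty(0,1)},\|H_\epsilon-H_\epsilon^\eta\|_{L^\infty(0,1)}\le\eta$ \emph{uniformly in} $\epsilon$. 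Let $u^{\epsilon,\eta}$ solve \eqref{EP} on the associated domain $\Omega^{\epsilon,\eta}$ with datum $\bar f^\epsilon|_{\Omega^{\epsilon,\eta}}$. The average $\hat f^{\epsilon,\eta}(x)=\int_{-G^\eta_\epsilon(x)}^{H^\eta_\epsilon(x)}\bar f^\epsilon(x,s)\,ds$ is bounded in $L^2(0,1)$ and differs from $\hat f^\epsilon$ only by an integral over a strip of width $\le\eta$, so $\|\hat f^{\epsilon,\eta}-\hat f^\epsilon\|_{L^1(0,1)}\le C\eta^{1/2}$ uniformly in $\epsilon$; hence, along a further subsequence, $\hat f^{\epsilon,\eta}\rightharpoonup\hat f^\eta$ in $L^2(0,1)$ with $\|\hat f^\eta-\hat f\|_{L^1(0,1)}\le C\eta^{1/2}$. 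Then Lemma~\ref{PPCT} gives $\|P_\epsilon u^{\epsilon,\eta}-\hat u^\eta\|_{L^2(\widetilde\Omega^{\epsilon,\eta})}\to0$ as $\epsilon\to0$, in particular $u^{\epsilon,\eta}|_\Gamma\to\hat u^\eta|_\Gamma$ in $L^2(\Gamma)$, where $\hat u^\eta\in H^1(0,1)$ solves \eqref{VFPDL-piecewise0} with the piecewise constant coefficients $p^\eta,q^\eta$ built from the frozen cells $Y^\ast(t^\ast_j)$ and the frozen cell problems \eqref{AUXG}, and with right-hand side $\hat f^\eta$.

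On the other hand, Proposition~\ref{BPT} applied with $\delta=\eta$ yields, uniformly in $\epsilon$,
\[
\|u^\epsilon-u^{\epsilon,\eta}\|^2_{H^1_\epsilon(B)}\le\|u^\epsilon-u^{\epsilon,\eta}\|^2_{H^1_\epsilon(\Omega^\epsilon\cap\Omega^{\epsilon,\eta})}\le\rho(\eta),
\]
and since $B$ is a fixed box around $\Gamma$, a one-dimensional trace estimate in $x_2$ gives $\|u^\epsilon|_\Gamma-u^{\epsilon,\eta}|_\Gamma\|_{L^2(\Gamma)}\le C\rho(\eta)^{1/2}$. Combining with the previous step,
\[
\|u^\epsilon|_\Gamma-\hat u\|_{L^2(\Gamma)}\le\|u^\epsilon|_\Gamma-u^{\epsilon,\eta}|_\Gamma\|_{L^2(\Gamma)}+\|u^{\epsilon,\eta}|_\Gamma-\hat u^\eta\|_{L^2(\Gamma)}+\|\hat u^\eta-\hat u\|_{L^2(\Gamma)},
\]
where the middle term tends to $0$ as $\epsilon\to0$ for each fixed $\eta$. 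It remains to show $\hat u^\eta\to\hat u$ in $H^1(0,1)$ as $\eta\to0$: since $p^\eta\to p$ uniformly (immediate from \eqref{RPFL} and the continuity of $G(\cdot,y)$, of $G_0(\cdot)$ and of $x\mapsto|Y^\ast(x)|$ on each piece) and $\hat f^\eta\to\hat f$ in $L^1$, the only genuine point is $q^\eta\to q$, for which one invokes the stability of the cell problem \eqref{AUXG} under an $\eta$-graph perturbation of $Y^\ast(x)$ — a cell version of the boundary perturbation argument of Proposition~\ref{BPT} — giving $X(t^\ast_j)\to X(x)$ in $H^1$ and hence $q^\eta(x)\to q(x)$; uniform coercivity of the bilinear forms and weak stability of the solutions then force $\hat u^\eta\to\hat u$. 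Letting first $\eta\to0$ and then $\epsilon\to0$ in the displayed triangle inequality gives $u^\epsilon|_\Gamma\to\hat u$ in $L^2(\Gamma)$, hence $u_0=\hat u$; as this limit is independent of the extracted subsequence, the full family $P_\epsilon u^\epsilon$ converges to $\hat u$ in $L^2(\widetilde\Omega^\epsilon)$, which is \eqref{EC}.

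I expect the principal obstacle to be that last step, namely the continuous dependence of the solution of the cell problem \eqref{AUXG} on the representative cell $Y^\ast(x)$ — i.e.\ the convergence $q^\eta\to q$ — with estimates uniform enough to pass to the limit; although morally a ``cell'' analogue of Proposition~\ref{BPT}, it has to be set up carefully since the cells $Y^\ast(x)$ share only their flat lower part, while their upper graphs $H(x,\cdot)$ (and the heights $G_0(x)$) vary with $x$. A secondary technical point is the bookkeeping of the weak-$L^2$ limit $\hat f^\eta$ together with the diagonal extraction in $\epsilon$ and $\eta$ required to make the ``arbitrary subsequence'' reduction fully rigorous.
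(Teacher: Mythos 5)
Your proposal follows essentially the same route as the paper: pass from the locally periodic profiles to piecewise periodic ones by freezing the slow variable on a fine partition, invoke Lemma~\ref{PPCT} on the frozen domains, control the error between the frozen and original problems uniformly in $\epsilon$ via Proposition~\ref{BPT}, and close by a triangle inequality and a diagonal argument. Three remarks on where the paper makes life easier or where your sketch needs tightening. First, the paper chooses the piecewise profiles \emph{one-sidedly}, $0\le G^\delta-G\le\delta$ and $0\le H^\delta-H\le\delta$, and extends $f^\epsilon$ by zero outside $\Omega^\epsilon$; then $\hat f^{\epsilon,\delta}(x)=\int_{-G^\delta_\epsilon}^{H^\delta_\epsilon}f^\epsilon=\int_{-G_\epsilon}^{H_\epsilon}f^\epsilon=\hat f^\epsilon(x)$ \emph{identically}, and the whole bookkeeping in your second paragraph about $\|\hat f^{\epsilon,\eta}-\hat f^\epsilon\|_{L^1}\lesssim\eta^{1/2}$ and the extra subsequence producing $\hat f^\eta$ disappears: one can feed the \emph{same} limit $\hat f$ into Lemma~\ref{PPCT} for every $\delta$. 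This is worth adopting — your two-sided approximants are correct in principle but create exactly the secondary ``technical point'' you flag at the end, and it is avoidable. Second, you should state explicitly that the frozen profiles also satisfy $|\partial_y G^\eta-\partial_y G|\le\eta$ and $|\partial_y H^\eta-\partial_y H|\le\eta$ (which your construction does give, because under (H) the derivatives $G_y,H_y$ are uniformly continuous in $x$ on each smooth piece); without this the convergence $q^\eta\to q$ is not immediate, since the cell problem \eqref{AUXG} carries a Neumann datum $N_1$ whose perturbation is governed by $H_y$, not just $H$. Third, what you call ``the principal obstacle'' — continuous dependence of the cell solution and of $q$ under graph perturbation of the cell — is in the paper not proved ab initio but cited as \cite[Proposition~A.1]{AP2}; recognizing you are in the scope of that result (periodic cell with a Lipschitz oscillating top graph and a flat bottom, perturbed in $C^1$) is how you would close that gap. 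With these emendations your argument matches the paper's.
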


\begin{remark} \label{RLP}
\begin{itemize}
\item[i)] If the function $ q(x)$ is continuous, we have that the integral formulation (\ref{VFPDL}) 
is the weak formulation of problem 
$$
\left\{
\begin{array}{l}
\frac{1}{p(x)} \left( q(x) \, u_x(x) \right)_x + u(x) = f(x), \quad x \in (0,1), \\
u_x(0) = u_x(1) = 0, 
\end{array}
\right.
$$
with $f(x)=\hat f(x)/p(x)$.

\item[ii)] Also, if we initially assume that $f^\epsilon$ does not depend on the vertical variable $y$, that is, $f^\epsilon(x,y)=f_0(x)$, then it is not difficult to see that 
$$
\hat f^\epsilon(x) = \left( H_\epsilon(x) + G_\epsilon(x) \right) f_0(x)
$$ 
and so, due to the Average Theorem discussed for example in \cite[Lemma 4.2]{AP3},  
$$
H_\epsilon(x) + G_\epsilon(x) \rightharpoonup \frac{1}{l_h} \int_0^{l_h} H(x,y) \, dy + \frac{1}{l_g} \int_0^{l_g} G(x,y) \, dy, \quad  w^* - L^\infty(0,1),
$$ 
as $\epsilon \to 0$.
Thus, 
$H_\epsilon(x) + G_\epsilon(x) \rightharpoonup p(x)$, $ w^* - L^\infty(0,1)$, and $\hat f(x)=p(x)f_0(x)$ as discussed in \eqref{SL@@}. 

\item[iii)] Moreover, if we combine the uniform estimate \eqref{EST0} in $H^1(\Omega^\epsilon)$ and Lemma \ref{EOT}, we obtain $P_{\epsilon} u^\epsilon$ uniformly bounded in $H^1(\widetilde \Omega^\epsilon)$. Hence, from the convergence result \eqref{EC} in $L^2(\widetilde \Omega^\epsilon)$, we can obtain by interpolation \cite[Section 1.4]{Henry} that 
$$
\| P_{\epsilon} u^\epsilon - \hat u \|_{H^{\beta}(\widetilde \Omega^\epsilon)} \to 0, \quad \textrm{ as } \epsilon \to 0,
$$ 
for all $0 \leq \beta < 1$.
\end{itemize}
\end{remark}

\begin{remark} \label{SofQ}
As a matter of fact, we have that the problem \eqref{VFPDL} is well posed in the sense that the diffusion coefficient $q$ is uniformly positive and smooth in $(0,1)$.
For see this, we use the variational formulation of the auxiliary problem \eqref{AUXG} given by the bilinear form
$$
a_{Y^*}(\varphi,\phi) = \int_{Y^*(x)} \nabla \varphi \cdot \nabla \phi \, dy_1 dy_2, \quad \forall \varphi, \phi \in V,
$$
defined in the Hilbert space $V$ given by $V = V_{Y^*} / \R$, 
$$
V_{Y^*} = \{ \varphi \in H^1(Y^*) \; | \; \varphi \text{ is } l_h \text{- periodic} \textrm{ in variable $y_1$}  \},
$$
with norm 
$$
\| \varphi \|_V = \left( \int_{Y^*} \left| \nabla \varphi \right|^2 \, dy_1 dy_2 \right)^{1/2}.
$$ 
Due to hypothesis {\bf (H)}, we have that the representative cell $Y^* = Y^*(x)$ is defined for all $x \in [0,1]$.
Hence, for all $\phi \in V$ and $x \in [0,1]$, we have  
$$
a_{Y^*}(X,\phi) = \int_{{B_{1}}} \, N_1 \phi \, dS,
$$
where $B_1(x)$ is the upper boundary of the basic cell $Y^*$.
Consequently, $y_1 - X(x)$ satisfies
\begin{equation} \label{EQB0}
a_{Y^*}(y_1 - X, \phi) = \int_{{B_{1}}} N_1 \phi \, dS - \int_{Y^*} \phi \, dy_1 dy_2 
- \int_{{B_{1}}} N_1 \, \phi \, dS = 0, \quad \forall \phi \in V, 
\end{equation}
since $\phi$ is $l_h$-periodic in the $y_1$ variable.
Also, we have that
\begin{eqnarray} \label{EQB1}
q \, l_h
& = &  \int_{Y^*} \frac{\partial}{\partial y_1}(y_1 - X(y_1,y_2)) \, \frac{\partial y_1}{\partial y_1} \, dy_1 dy_2
= \int_{Y^*} \nabla(y_1 - X(y_1,y_2)) \cdot \nabla y_1 \, dy_1 dy_2 \nonumber \\
& = & a_{Y^*}(y_1 - X, y_1).
\end{eqnarray}
Hence, due to relation (\ref{EQB0}) with $\phi = - X$, and identity (\ref{EQB1}), we get for all $x \in [0,1]$
\begin{eqnarray*} \label{AP}
q \, l_h & = & a_{Y^*}(y_1 - X, y_1) + a_{Y^*}(y_1 - X, - X) \\
 & = & a_{Y^*}(y_1 - X, y_1 - X) = \|y_1-X\|_V > 0.
\end{eqnarray*}
Thus, since $\|y_1-X\|_V$ is a continuous function in $[0,1]$ (see \cite[Proposition A.1]{AP3}) and $|Y^*| > 0$, we have that the homogenization coefficient $q$ is uniformly positive and continuous in $[0,1]$ implying that, for example, the problem \eqref{VFPDL} is well posed being $\hat u$ its unique solution.
\end{remark}

We provide now a proof of the Theorem \ref{ET}.

\begin{proof}

From estimate (\ref{EST0}) and Lemma \ref{EOT}, we have $u^\epsilon|_{\widehat \Omega_0} \in H^1(\widehat \Omega_0)$ satisfying 
$$
\begin{gathered}
\| P_{\epsilon} u^\epsilon \|_{L^2(\widehat \Omega_0)}, \Big\| \frac{\partial P_{\epsilon} u^\epsilon}{\partial x_1} \Big\|_{L^2(\widehat \Omega_0)} \textrm{ and }
\frac{1}{\epsilon} \Big\| \frac{\partial P_{\epsilon} u^\epsilon}{\partial x_2} \Big\|_{L^2(\widehat \Omega_0)} \le M 
\textrm{ for all } \epsilon > 0,
\end{gathered}
$$
with $M>0$ independent of $\epsilon$, where $\widehat \Omega_0 \subset \widetilde \Omega^\epsilon$ is given here by 
$
\widehat  \Omega_0 = (0,1) \times (-G_0, H_1).
$
Then, there exists $u_0\in H^1(\widehat \Omega_0)$ and a subsequence, still denoted by $P_{\epsilon} u^\epsilon$, satisfying 
\begin{equation} \label{LEO-bis}
\begin{gathered}
P_\epsilon u^\epsilon \rightharpoonup u_0 \quad w-H^1(\widehat \Omega_0), 
\quad \textrm{ and } \quad 
\frac{\partial P_\epsilon u^\epsilon}{\partial x_2} \rightarrow 0 \quad s-L^2(\widehat \Omega_0).
\end{gathered}
\end{equation}
Thus, arguing as in \eqref{u0x}, we get $u_0(x_1,x_2) = u_0(x_1)$ on $\widehat \Omega_0$, and so, $u_0 \in H^1(0,1)$.

We will show that $u_0$ satisfies the Neumann problem \eqref{VFPDL} using a discretization argument on the oscillating boundary of the domain.

For this, let us fix a small $\delta>0$ and consider piecewise periodic functions $G^\delta (x,y)$ and $H^\delta (x,y)$ as described at the beginning of Section \ref{PWPC} satisfying hypothesis {\bf (H)}  and condition  
$$
\begin{gathered}
0\leq G^\delta(x,y)-G(x,y) \leq \delta, \\ 
0\leq H^\delta(x,y)-H(x,y) \leq \delta, 
\end{gathered}
\quad \forall (x,y) \in [0,1] \times \R.
$$ 

In order to construct these functions, we may proceed as follows. 
The functions $G$ and $H$ are uniformly $C^1$ in each interval $(\xi_{i-1},\xi_i) \times (0,1)$ being periodic in the second variable. In particular, for $\delta>0$ small enough and for a fixed $z\in (\xi_{i-1},\xi_i)$ we have that there exists a small interval $(z-\eta,z+\eta)$ with $\eta$ depending only on $\delta$ such that $|G(x,y)-G(z,y)|+|\partial_y G(x,y)-
\partial_y G(z,y)|<\delta/2$ and $|H(x,y)-H(z,y)|+|\partial_y H(x,y)-
\partial_y H(z,y)|<\delta/2$ for all $x\in (z-\eta,z+\eta)\cap (\xi_{i-1},\xi_i)$ and for all $y\in \R$. This allows us to select a finite number of points:  
$\xi_{i-1}=\xi_{i-1}^1< \xi_{i-1}^2<\ldots<\xi_{i-1}^r=\xi_i$ such that $\xi_{i-1}^r-\xi_{i-1}^{r-1}<\eta$ and therefore, defining
$G^\delta(x,y)=G(\xi_{i-1}^r,y)+\delta/2$ and $H^\delta(x,y)=H(\xi_{i-1}^r,y)+\delta/2$ for all $x\in (\xi_{i-1}^r,\xi_{i-1}^{r+1})$ we have that $0\leq G^\delta(x,y)-G(x,y)\leq \delta$, $|\partial_y G^\delta(x,y)-\partial_y G(x,y)|\leq \delta$, $0\leq H^\delta(x,y)-H(x,y)\leq \delta$ and $|\partial_y H^\delta(x,y)-\partial_y H(x,y)|\leq \delta$ for all
$(x,y)\in (\xi_{i-1},\xi_i)\times \R$. 

Note that this construction can be done for all $i=1,\ldots, N$. In particular, if we rename all the points $\xi_i^k$ constructed above by $0=z_0<z_1<\ldots<z_m=1$ observing that $m=m(\delta)$, then the functions $G^\delta$ and $H^\delta$ satisfy $G^\delta(x,y)=G^\delta_i(y)$ and $H^\delta(x,y)=H^\delta_i(y)$ in $(x,y) \in (z_{i-1},z_i) \times \R$, $i=1,\ldots, m$, where $G^\delta_i$ and $H^\delta_i$ are $C^1$-functions, $l_g$ and $l_h$-periodic respectively. At each point $z_i$, we can set $G^\delta$ and $H^\delta$ as the minimum value of the lateral limit in $z_i$.

Let us now to denote $G_\epsilon^\delta(x)=G^\delta(x,x/\epsilon^\alpha)$, $\alpha > 1$, and $H_\epsilon^\delta(x)=H^\delta(x,x/\epsilon)$, aiming to introduce the following oscillating domains
$$
\begin{gathered}
\Omega^{\epsilon,\delta} = \{ (x,y) \in \R^2 \; | \;  x \in (0,1),  \;
 - G_{\epsilon}^{\delta}(x) < y < H_{\epsilon}^{\delta}(x) \}, \\
\widetilde \Omega^{\epsilon,\delta} = \{ (x,y) \in \R^2 \; | \;  x \in (0,1),  \;
 - G_{\epsilon}^{\delta}(x) < y < H_1 \}.
\end{gathered}
$$

Since $H_{\epsilon}^{\delta}$ satisfies the hyphoteses of Lemma \ref{EOT}, there exists an extension operator
$$
P_{\epsilon,\delta} \in \mathcal{L}(L^p(\Omega^{\epsilon,\delta}),L^p(\widetilde\Omega^\delta)) 
  \cap \mathcal{L}(W^{1,p}(\Omega^{\epsilon,\delta}),W^{1,p}(\widetilde\Omega^\delta))
$$
satisfying the uniform estimate \eqref{EQOP} with $\eta(\epsilon) \sim 1/\epsilon$.

Taking $f^\epsilon \in L^2(\Omega^\epsilon)$ satisfying $\|f^\epsilon\|_{L^2(\Omega^\epsilon)}\leq C$, and extend it by 0 outside $\Omega^\epsilon$, and still denoting the extended function again by $f^\epsilon$, and using that $G_\delta \geq G$ and $H_\delta \geq H$, we have that 
$\hat f^\epsilon_\delta(x)=\int^{H^\delta_\epsilon(x)}_{-G_\epsilon^\delta(x)} f^\epsilon (x,y)dy=\int^{H_\epsilon(x)}_{-G_\epsilon(x)} f^\epsilon(x,y) dy = \hat f^\epsilon(x)$ and by hypothesis, we have that $\hat f^\eps_\delta\equiv \hat f^\eps\rightharpoonup \hat f$ w-$L^2(0,1)$. 

Therefore, it follows from Theorem \ref{PPCT} that for each $\delta > 0$ fixed, 
there exist $u^\delta \in H^1(0,1)$
such that the solutions $u^{\epsilon,\delta}$ 
of \eqref{EP} in $\Omega^{\epsilon,\delta}$ satisfy
\begin{equation} \label{UDC}
\| P_{\epsilon,\delta} u^{\epsilon,\delta} - u^\delta \|_{L^2(\widetilde \Omega^{\epsilon,\delta})} \to 0, \quad \textrm{ as } \epsilon \to 0,
\end{equation}
where $u^\delta \in H^1(0,1)$ is the unique solution of the Neumann problem
\begin{equation} \label{VFPD}
\int_0^1 \Big\{ q^\delta(x) \; u_x^\delta(x) \, \varphi_x(x) 
+ p^\delta(x) \, u^\delta(x) \, \varphi(x) \Big\} dx = \int_0^1  \, \hat f(x) \, \varphi(x) \, dx, \quad \forall \varphi \in H^1(0,1),
\end{equation}
where $q^\delta$ and, $p^\delta: (0,1) \mapsto \R$ are strictly positive functions, locally constant, given by
$$
\left\{ 
\begin{gathered}
q^\delta(x) = \frac{1}{l_h} \int_{Y^*_i} \Big\{ 1 - \frac{\partial X_i}{\partial y_1}(y_1,y_2) \Big\} dy_1 dy_2, \\
p^\delta(x) =\frac{|Y_i^*|}{l_h} + \frac{1}{l_g} \int_0^{l_g} G_i^\delta(s) \, ds - G^\delta_{0,i},  \\
G_{0,i}^\delta = \min_{y \in \R} G_i^\delta(y),
\end{gathered}
\right. 
\quad x \in (z_{i-1},z_i),
$$
where the function $X_i$ is the unique solution of (\ref{AUX}) in the representative cell $Y^*_i$ given by 
$$
Y^*_i = \{ (y_1,y_2) \in \R^2 \; | \; 0< y_1 < l_h, \quad - G_{0,i}^\delta < y_2 < H_i^\delta(y_1) \}, \quad i=1, \ldots, m.
$$

Now, let us pass to the limit in $\eqref{VFPD}$ as $\delta \to 0$.
To do this, we consider the functions $q^\delta$ and $p^\delta$ defined in $x \in (0,1)$
and the functions $q$ and $p$ defined in \eqref{RPFL}.
We have that $q^\delta$ and $p^\delta$ converge to $q$ and $p$ uniformly in $(0,1)$.
The uniform convergence of $q^\delta$ to $q$ in $(0,1)$ follows from \cite[Proposition A.1]{AP2}. The uniform convergence of $p^\delta$ to $p$ follows from the uniform convergence of $G^\delta$ and $H^\delta$ to $G$ and $H$ respectively as $\delta\to 0$. 

Therefore, we obtain from \cite[p. 8]{BLP} or \cite[p. 1]{CP} the following limit variational formulation:
to find $u \in H^1(0,1)$ such that 
\begin{equation} \label{VFPDL-F}
\int_0^1 \Big\{ q(x) \; u_x(x) \, \varphi_x(x) 
+ p(x) \, u(x) \, \varphi(x) \Big\} dx = \int_0^1  \, \hat f(x) \, \varphi \, dx
\end{equation}
for all $\varphi \in H^1(0,1)$.
Hence, there exists $u^* \in H^1(0,1)$ such that 
\begin{equation} \label{DCU}
u^\delta \to u^* \textrm{ in } H^1(0,1)
\end{equation}
where $u^*$ is the unique solution of the Neumann problem (\ref{VFPDL-F}).

We will complete the proof showing that $u^* = u_0$ in $(0,1)$, where $u_0$ is the function obtained in \eqref{LEO-bis}.
In order to do so, we observe that $\|u^*-u_0\|_{L^2(0,1)}^2 = \left\{ H_1 + G_0\right\}^{-1}\|u^*-u_0\|_{L^2(\widehat \Omega_0)}^2$ and therefore, to show that $u^*=u_0$ it is enough to show that $\|u^*-u_0\|_{L^2(\widehat \Omega_0)}^2=0$. Adding and
subtracting appropriate functions,  we have for all $\epsilon$ and $\delta > 0$ that
\begin{equation}\label{EQF001}
\begin{array}{l}
\| u^* - u_0 \|_{L^2(\widehat \Omega_0)} \leq 
\| u^* - u^{\delta}  \|_{L^2(\widehat \Omega_0)}+ 
\| u^\delta - u^{\epsilon,\delta}  \|_{L^2(\widehat \Omega_0)}\\ \\
\qquad \qquad+ \|  u^{\epsilon,\delta} - u^\epsilon   \|_{L^2(\widehat \Omega_0)} 
+ \|  u^\epsilon - u_0  \|_{L^2(\widehat \Omega_0)}.
\end{array}
\end{equation}

Let $\eta$ be now a positive small number. 
From \eqref {DCU} and Theorem \ref{BPT}, we can choose a $\delta>0$ fixed and small such that 
$\| u^* - u^{\delta}  \|_{L^2(\Omega_0)} \leq \eta$ and $\|  u^{\epsilon,\delta} - u^\epsilon   \|_{L^2(\Omega_0)} \leq \eta$ uniformly for all $\epsilon>0$.  
For this particular value of $\delta$, we can choose, by \eqref{UDC},  $\epsilon_1>0$ small
enough such that $\| u^\delta - u^{\epsilon,\delta}  \|_{L^2(\Omega_0)}\leq \eta$  for $0<\epsilon<\epsilon_1$. 
Moreover, from 
\eqref{LEO-bis}, we have that there exists $\epsilon_2>0$ such that $ \|  u^\epsilon - u_0  \|_{L^2(\Omega_0)} \leq \eta$ for all $0<\epsilon<\epsilon_2$.   
Hence with $\epsilon=\min\{\epsilon_1,\epsilon_2\}$ applied to \eqref{EQF001}, we get
$\| u^* - u_0 \|_{L^2(\Omega_0)} \leq 4\eta$. Since $\eta$ is arbitrarily small, we get $\|u^*-u_0\|_{L^2(\widehat \Omega_0)}^2=0$.

\end{proof}


\section{Convergence of Linear Semigroups} \label{SCLSg}

In order to accomplish our goal, we consider here the linear parabolic problems associated to the perturbed equation \eqref{RP} and its limit problem \eqref{LRP} in the abstract framework given by \cite{Hale, Henry} to show that, under an appropriated notion of convergence, the linear semigroup given by \eqref{RP} converges to the one established by \eqref{LRP} as $\epsilon \to 0$. 
The convergence concept that we adopt here was first introduced in the works \cite{St1972a, St1972b,St1972c, Va1,Va2} and then successfully  applied in \cite{AC, ACL1,ACL2, ACL3, ACP} to concrete perturbation problems given by parabolic equations. 

To do so, let us first consider a family of Hilbert spaces $\{ Z_\epsilon \}_{\epsilon > 0}$
defined by $Z_\epsilon = L^2(\Omega^\epsilon)$ under the canonical inner product 
$$
( u, v )_{\epsilon} = \int_{\Omega^\epsilon} u(x_1,x_2) \, v(x_1,x_2) \, dx_1 dx_2 
$$ 
and let $Z_0 = L^2(0,1)$ be the limiting Hilbert space with the inner product $( \cdot, \cdot )_0$ given by
$$
( u, v )_{{0}} = \int_0^1 p(x) \, u(x) \, v(x) \, dx 
$$
where
$$
p(x) = \frac{|Y^*|}{l_h} + \frac{1}{l_g} \int_0^{l_g} G(x, y) \, dy - G_0(x)
$$
is the positive function previously defined in \eqref{RPFL}.

We write the elliptic problem \eqref{VFP} as an abstract equation $L_\epsilon u = f^{\epsilon}$ where $L_\epsilon: \mathcal{D}(L_\epsilon) \subset L^2(\Omega^\epsilon) \mapsto L^2(\Omega^\epsilon)$ is the self adjoint, positive linear operator with compact resolvent 
\begin{equation} \label{AEPS}
\begin{gathered}
\mathcal{D}(L_\epsilon) = \left\{ u \in H^2(\Omega^\epsilon) \, | \, 
\frac{\partial u}{\partial x_1} N_1^\epsilon + \frac{1}{\epsilon^2} \frac{\partial u}{\partial x_2}N_2^\epsilon = 0
\textrm{ on } \partial \Omega^\epsilon \right\} \\
L_\epsilon u =  - \frac{\partial^2 u}{{\partial x_1}^2} - 
\frac{1}{\epsilon^2} \frac{\partial^2 u}{{\partial x_2}^2} + u, \quad u \in \mathcal{D}(L_\epsilon).
\end{gathered}
\end{equation}

Analogously, we associate the limit elliptic problem \eqref{VFPDL} to the \emph{limit linear operator} 
$L_0: \mathcal{D}(L_0) \subset Z_0 \mapsto Z_{0}$ defined by
\begin{equation} \label{AO}
\begin{gathered}
\mathcal{D}(L_0) = \left\{ u \in H^2(0,1) \, | \, u'(0) = u'(1) = 0 \right\} \\
L_0 u = - \frac{1}{p(x)} \left( q(x) u_x \right)_x  + u, \quad u \in \mathcal{D}(L_0)
\end{gathered}
\end{equation}
where $p$ and $q$ are the homogenized coefficients established in \eqref{RPFL}.
Due to Remark \ref{SofQ}, it is clear that $L_0$ is a positive self adjoint operator with compact resolvent. 

In order to simplify the notation, we denote by $Z_{\epsilon}^{\alpha}$ the fractional power scale 
associated to operators $L_{\epsilon}$ with $0\leqslant \alpha \leqslant 1$ and $0\leqslant \epsilon \leqslant 1$. 
We also write $Z_{\epsilon}:=Z_{\epsilon}^0$ for all $0\leqslant \epsilon \leqslant 1$. Notice that $ Z_{\epsilon}^{1/2}$ is the Sobolev Space $H^1(\Omega^\epsilon)$ with norm 
$$
\|u\|_{Z_{\epsilon}^{1/2}}^2=\left\| \frac{\partial u} {\partial x_{1}}\right\|_{Z_{\epsilon}}^2 + \frac{1}{\epsilon^2}  \left\|\frac{\partial u} {\partial x_{2}}\right\|_{Z_{\epsilon}}^2  + \left\| u \right\| _{Z_{\epsilon}}^2.
$$

\begin{remark}\label{remark:exbound}
It follows from Remark $\ref{rem:eqnorm }$ that the extension operators $P_{\epsilon} \in \mathcal{L}(Z_{\epsilon}^{{1/2}}, H^1(\widetilde \Omega^\epsilon))$ $\cap$ $\mathcal{L}(Z_{\epsilon}, L^2(\widetilde \Omega^\epsilon))$ given by Lemma \ref{EOT} are uniformly bounded in $\epsilon$. Therefore, we obtain by interpolation that 
$$ 
\sup_{0\leqslant \epsilon \leqslant 1}\|P_{\epsilon}\|_{\mathcal{L}(Z_{\epsilon}^\alpha, H^{2\alpha}(\widetilde \Omega^\epsilon))} < \infty, \quad 0 \leqslant \alpha \leqslant \frac{1}{2}.
$$
\end{remark}

So far, we have passed to limit in the variational problem \eqref{VFP} as $\epsilon \to 0$ getting the limit equation \eqref{VFPDL}.
Here, we apply the concept of \emph{compact convergence} to obtain convergence properties of the linear semigroups generated by the operators $L_\epsilon$ and $L_0$.

For this, let us consider the family of linear continuous operators $E_\epsilon: Z_0 \mapsto Z_\epsilon$ given by
$$
(E_\epsilon u)(x_1,x_2) = u(x_1) \textrm{ on } \Omega^\epsilon
$$
for each $u \in Z_0$.
Since
\begin{eqnarray*}
\| E_\epsilon u \|^2_{Z_\epsilon}  =  \int_{\Omega^\epsilon} u^2(x_1) \, dx_1 dx_2 
 =  \int_0^1 \left\{ H_\epsilon(x_1) + G_\epsilon(x_1) \right\} u^2(x_1) \, dx_1,
\end{eqnarray*}
we have that $\| E_\epsilon u \|_{Z_\epsilon} \to \| u \|_{Z_0} \textrm{ as } \epsilon \to 0$.
Observe that $E_\epsilon$ is a kind of inclusion operator from $Z_0$ into $Z_\epsilon$.
Similarly, we can consider $E_{\epsilon}: L^1_{0} \to L^1_{\epsilon}$, and so, taking in $ L^1_{0}$ the equivalent norm $\|u\|_{Z^1_0}= \| - u_{xx} + u \|_{ Z_0}$,  we obtain
$$
\|E_{\epsilon} u\|_{L^{1}_\epsilon} \to \| u\|_{L^{1}_{0}}.
$$
Consequently, since 
$$
\sup_{0 \leqslant \epsilon \leqslant 1} \{\| E_\epsilon \|_{\mathcal{L}(Z_0,Z_\epsilon)},  \|E_\epsilon \|_{\mathcal{L}(L^1_0,L^1_\epsilon)} \} < \infty,
$$
we get by interpolation that
$$
C  = \sup_{\epsilon>0}\| E_{\epsilon} \|_{\mathcal{L}(Z_{0}^\alpha,Z_{\epsilon}^\alpha)} < \infty 
\, \,  \textrm{ for } 0\leqslant \alpha \leqslant 1.
$$ 

Now we are in condition to set the following concepts of convergence, compactness and compact convergence of operators 
associated to the family of operators $\{ E_\epsilon \}_{\epsilon > 0}$.
\begin{definition}
We say that a sequence of elements $\{ u^\epsilon \}_{\epsilon > 0}$ with $u^\epsilon \in Z_\epsilon$ is \emph{E-convergent} to $u \in Z_0$, if $\| u^\epsilon - E_\epsilon u \|_{Z_\epsilon} \to 0$
as $\epsilon \to 0$. We write $u^\epsilon \stackrel{E}{\rightarrow} u$.
\end{definition}

\begin{definition} A sequence $\{ u_n \}_{n \in \N}$ with $u_n \in Z_{\epsilon_n}$ is said to be 
\emph{E-precompact} if for any subsequence $\{ u_{n'} \}$ there exist a subsequence $\{ u_{n''} \}$
and $u \in Z_0$ such that $u_{n''} \stackrel{E}{\rightarrow} u$ as $n'' \to \infty$.
A family $\{ u^\epsilon \}_{\epsilon > 0}$ is called \emph{pre-compact} if each sequence $\{ u_{\epsilon_n} \}$,
with $\epsilon_n \to 0$, is pre-compact.
\end{definition}

\begin{definition} We say that a family of operators $\{ B_\epsilon \in \mathcal{L}(Z_\epsilon) \, | \, \epsilon > 0 \}$ 
\emph{E-converges} to $B \in \mathcal{L}(Z_0)$ as $\epsilon \to 0$, if $B_\epsilon f^\epsilon \stackrel{E}{\rightarrow} B f$
whenever $f^\epsilon \stackrel{E}{\rightarrow} f \in Z_0$. We write $B_\epsilon \stackrel{EE}{\rightarrow} B$.
\end{definition}

\begin{definition} We say that a family of compact operators $\{ B_\epsilon \in \mathcal{L}(Z_\epsilon) \, | \, \epsilon > 0 \}$ 
converges compactly to a compact operator $B \in \mathcal{L}(Z_0)$, if for any family $\{ f^\epsilon \}_{\epsilon > 0}$ with 
$\| f^\epsilon \|_{Z_\epsilon} \le 1$, we have that the family $\{ B_\epsilon f^\epsilon \}$ is E-precompact and 
$B_\epsilon \stackrel{EE}{\rightarrow} B$. We write $B_\epsilon \stackrel{CC}{\rightarrow} B$.
\end{definition}

We finally note this notion of convergence can also be extended to sets following \cite{ACL3, ACP}.

\begin{definition} \label{USCC}
Let $\mathcal{O}_\epsilon \subset Z_\epsilon^\alpha$, $\epsilon \in [0,1]$, and $\mathcal{O}_0 \subset Z_0^\alpha$, $\alpha \in [0,1)$. We say that the family of sets $\set{\mathcal{O}_\epsilon}_{\epsilon \in [0,1]}$ is $E$-upper semicontinuous or just upper semicontinuous at $\epsilon=0$ if 
$$
\sup_{w^\epsilon \in \mathcal{O}_\epsilon} 
\Big[ \inf_{w \in \mathcal{O}_0} \left\{ \| w^\epsilon - E_\epsilon w \|_{Z_\epsilon^\alpha}  \right\} \Big]
\to 0, 
\textrm{ as } \epsilon \to 0.
$$
Let us also recall an useful characterization of upper semicontinuity of sets: 
If any sequence $\set{u^\epsilon} \subset \mathcal{O}_\epsilon$ has a $E$-convergent subsequence with limit belonging  to $\mathcal{O}$, then $\set{\mathcal{O}_\epsilon}$ is $E$-upper semicontinuous at zero.
\end{definition}

The following result is basically Theorem \ref{ET} written according to previous framework.

\begin{corollary} \label{CCAE}
The family of compact operators $\{ L_\epsilon^{-1} \in \mathcal{L}(Z_\epsilon) \}_{\epsilon > 0}$ converges compactly to the
compact operator $L^{-1}_0 \in \mathcal{L}(Z_0)$ as $\epsilon \to 0$.
\end{corollary}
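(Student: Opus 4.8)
The plan is to translate Theorem \ref{ET} into the language of $E$-convergence and compact convergence. There are two things to verify: (i) $L_\epsilon^{-1} \stackrel{EE}{\rightarrow} L_0^{-1}$, and (ii) for any family $\{f^\epsilon\}$ with $\|f^\epsilon\|_{Z_\epsilon} \le 1$ the family $\{L_\epsilon^{-1} f^\epsilon\}$ is $E$-precompact. Throughout, the bridge between the two pictures is the extension operator $P_\epsilon$ of Lemma \ref{EOT}, whose uniform bounds (Remark \ref{remark:exbound}) let us transport weak/strong limits on the fixed domain $\widehat\Omega_0 = (0,1)\times(-G_0,H_1)$ back and forth.

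First I would prove (i). Suppose $f^\epsilon \stackrel{E}{\rightarrow} f$ in $Z_0$, i.e.\ $\|f^\epsilon - E_\epsilon f\|_{L^2(\Omega^\epsilon)} \to 0$; in particular $\|f^\epsilon\|_{L^2(\Omega^\epsilon)}$ is uniformly bounded, so condition \eqref{FC} holds. Set $u^\epsilon = L_\epsilon^{-1} f^\epsilon$, the solution of \eqref{EP}. I need to check the hypothesis of Theorem \ref{ET}, namely that $\hat f^\epsilon(x) = \int_{-G_\epsilon(x)}^{H_\epsilon(x)} f^\epsilon(x,s)\,ds$ converges weakly in $L^2(0,1)$ to the right limit. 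Writing $f^\epsilon = E_\epsilon f + (f^\epsilon - E_\epsilon f)$, the error term contributes at most $\|f^\epsilon - E_\epsilon f\|_{L^2(\Omega^\epsilon)}\,(G_1+H_1)^{1/2} \to 0$ in $L^2(0,1)$, while the main term gives $\int_{-G_\epsilon(x)}^{H_\epsilon(x)} f(x)\,ds = (H_\epsilon(x)+G_\epsilon(x))\,f(x) \rightharpoonup p(x) f(x)$ weakly-$*$ in $L^\infty(0,1)$ by the Average Theorem, exactly as in Remark \ref{RLP}(ii) and \eqref{SL@@}. Hence $\hat f^\epsilon \rightharpoonup p\,f =: \hat f$ weakly in $L^2(0,1)$, and Theorem \ref{ET} applies: there is $\hat u \in H^1(0,1)$, the unique solution of \eqref{VFPDL} with this $\hat f$, such that $\|P_\epsilon u^\epsilon - \hat u\|_{L^2(\widetilde\Omega^\epsilon)} \to 0$. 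But \eqref{VFPDL} with right-hand side $\hat f = p f$ is precisely the weak form of $L_0 \hat u = f$ (the factor $p(x)$ being absorbed into the inner product of $Z_0$, cf.\ \eqref{AO}), so $\hat u = L_0^{-1} f$. It remains to pass from the $L^2(\widetilde\Omega^\epsilon)$-convergence of $P_\epsilon u^\epsilon$ to $E$-convergence of $u^\epsilon$: since $u^\epsilon = P_\epsilon u^\epsilon|_{\Omega^\epsilon}$ and $E_\epsilon \hat u(x_1,x_2) = \hat u(x_1)$, we have
$$
\|u^\epsilon - E_\epsilon \hat u\|_{L^2(\Omega^\epsilon)} \le \|P_\epsilon u^\epsilon - \hat u\|_{L^2(\widetilde\Omega^\epsilon)} \to 0,
$$
because $\Omega^\epsilon \subset \widetilde\Omega^\epsilon$ and $\hat u$ does not depend on $x_2$. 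This gives $L_\epsilon^{-1} f^\epsilon \stackrel{E}{\rightarrow} L_0^{-1} f$, i.e.\ $L_\epsilon^{-1} \stackrel{EE}{\rightarrow} L_0^{-1}$.

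Next, (ii), the compactness. Let $\{f^\epsilon\}$ satisfy $\|f^\epsilon\|_{Z_\epsilon} \le 1$ and put $u^\epsilon = L_\epsilon^{-1} f^\epsilon$. From the a priori estimate \eqref{EST0} and Lemma \ref{EOT} (Remark \ref{remark:exbound}), $P_\epsilon u^\epsilon$ is uniformly bounded in $H^1(\widehat\Omega_0)$, with moreover $\|\partial_{x_2} P_\epsilon u^\epsilon\|_{L^2(\widehat\Omega_0)} \le K\epsilon \to 0$. By Rellich's theorem, along any subsequence we may extract a further subsequence with $P_\epsilon u^\epsilon \rightharpoonup u_0$ weakly in $H^1(\widehat\Omega_0)$ and strongly in $L^2(\widehat\Omega_0)$, and $\partial_{x_2} P_\epsilon u^\epsilon \to 0$ strongly; arguing as in \eqref{u0x}, $u_0$ is independent of $x_2$, so $u_0 \in H^1(0,1)$. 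Then, exactly as in the displayed estimates following \eqref{TRACE} in the proof of Lemma \ref{PPCT} (the argument that upgrades trace convergence on $\Gamma$ to $L^2$-convergence on $\widetilde\Omega^\epsilon$ using the smallness of $\partial_{x_2}$), one gets $\|P_\epsilon u^\epsilon - u_0\|_{L^2(\widetilde\Omega^\epsilon)} \to 0$ along the subsequence, hence $\|u^\epsilon - E_\epsilon u_0\|_{L^2(\Omega^\epsilon)} \to 0$, i.e.\ $u^\epsilon \stackrel{E}{\rightarrow} u_0$. This establishes $E$-precompactness of $\{L_\epsilon^{-1} f^\epsilon\}$. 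Combining with (i) yields $L_\epsilon^{-1} \stackrel{CC}{\rightarrow} L_0^{-1}$ by definition.

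The only genuinely delicate point is verifying the weak-$L^2$ convergence of $\hat f^\epsilon$ under the mere assumption $f^\epsilon \stackrel{E}{\rightarrow} f$ — one must be careful that the $E$-convergence is $L^2$ on the \emph{thin-rescaled} domain $\Omega^\epsilon$ and translate it correctly through the averaging of $H_\epsilon + G_\epsilon$; but this is handled cleanly by the splitting $f^\epsilon = E_\epsilon f + (f^\epsilon - E_\epsilon f)$ above. Everything else is a bookkeeping exercise recasting Theorem \ref{ET} and the uniform estimates \eqref{EST0}, together with the standard fact (e.g.\ \cite{ACP}) that $EE$-convergence of resolvents plus $E$-precompactness of their images is exactly the definition of $CC$-convergence.
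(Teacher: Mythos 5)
Your proof is correct and follows essentially the same strategy as the paper: both verify the two parts of compact convergence (EE-convergence and $E$-precompactness of $L_\epsilon^{-1}$ applied to bounded families) by reducing to Theorem~\ref{ET}, with the passage $\|u^\epsilon - E_\epsilon \hat u\|_{Z_\epsilon}\le\|P_\epsilon u^\epsilon-\hat u\|_{L^2(\widetilde\Omega^\epsilon)}$ translating the extension-operator convergence into $E$-convergence, and both use the averaging of $H_\epsilon+G_\epsilon$ (Remark~\ref{RLP}) to identify $\hat f = p f$ and match $L_0^{-1}$ in the weighted space $Z_0$. The only noticeable difference is cosmetic: for the precompactness step you rerun the Rellich/trace argument on the fixed domain $\widehat\Omega_0$ directly rather than extracting a weakly convergent subsequence of $\hat f^\epsilon$ and invoking Theorem~\ref{ET} as a black box (which is what the paper does, implicitly passing to a subsequence); either is fine and they are the same in substance.
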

\begin{proof}
Let us take $\{ f^{\epsilon} \}_{\epsilon > 0} \subset Z_\epsilon$ with $\| f^{\epsilon} \|_{Z_\epsilon} \le 1$ and define $u^\epsilon = L_\epsilon^{-1} f^{\epsilon}$.
Then, $L_\epsilon u^\epsilon = f^{\epsilon}$ and $u^\epsilon$ satisfies the problem \eqref{VFP}.
Consequently, we get from Theorem \ref{ET} and Remark \ref{RLP} that there exist $f_0 \in Z_{0}$ and $u_0 \in H^1(0,1)$ such that $L_0 u_0 = f_0$, $\| P_\epsilon u^\epsilon - u_0\|_{L^2(\widetilde \Omega^\epsilon)} \to 0$, as $\epsilon \to 0$, where $u_0(x_1,x_2) = u_0(x_1)$. Recall that $P_{\epsilon}$ is the extension operator given by Lemma \ref{EOT}.
Hence, we can conclude from the inequality
$$
\| u^\epsilon - E_\epsilon u_0 \|_{Z_\epsilon}  =  \| \left( P_\epsilon u^\epsilon - u_0 \right) |_{\Omega^\epsilon} \|_{Z_\epsilon} 
 \le  \| P_\epsilon u^\epsilon - u_0 \|_{L^2(\widetilde \Omega^\epsilon)} 
$$
that $u^\epsilon \stackrel{E}{\rightarrow} u_0$
proving that the family $\{ L_\epsilon^{-1} f^{\epsilon} \}_{\epsilon > 0}$ is E-precompact.

Finally, we have to show that $L^{-1}_\epsilon \stackrel{EE}{\rightarrow} L^{-1}_0$. For this, let us suppose 
\begin{equation} \label{FEC}
f^\epsilon \stackrel{E}{\rightarrow} f_0.
\end{equation}
Due to \eqref{FHD} and \eqref{FEC}, we have for any $\varphi \in L^2(0,1)$ that
$$
\int_{\Omega^\epsilon} \left\{ f^\epsilon(x_1,x_2) - f_0(x_1) \right\} \varphi(x_1) \, dx_1 dx_2 
= \int_0^1 \left\{ \hat f^\epsilon(x) - \left( H_\epsilon(x) + G_\epsilon(x) \right) f_0(x) \right\} \varphi(x) \, dx \to 0,
$$
as $\epsilon \to 0$.
Hence, since $\left( H_\epsilon(x) + G_\epsilon(x) \right) f_0(x) \rightharpoonup p(x) f_0(x)$, $w^* - L^\infty(0,1)$, see Remark \ref{RLP}, we can conclude $\hat f^\epsilon(x) \rightharpoonup p(x) f_0(x)$, $w^* - L^\infty(0,1)$.
Thus, it follows from Theorem \ref{ET} and Remark \ref{RLP} that $L^{-1}_\epsilon f^\epsilon \to L^{-1}_0 f_0$, and then $L^{-1}_\epsilon \stackrel{EE}{\rightarrow} L^{-1}_0$ as $\epsilon \to 0$.
\end{proof}

Now, let us take the positive coefficient $p(x)$ from \eqref{RPFL} and consider the operator $M_\epsilon: L^r(\Omega^\epsilon) \mapsto L^r(0,1)$, $1 \le r \le \infty$, given by
$$
\begin{gathered}
(M_\epsilon f^\epsilon)(x) = \frac{1}{p(x)} \int_{-G_\epsilon(x)}^{H_\epsilon(x)} f^\epsilon(x, s) \, ds  \quad x \in (0,1).
\end{gathered}
$$
It is easy to see that $M_\epsilon$ is a well-defined bounded linear operator with 
\begin{equation} \label{MEE}
\| M_\epsilon f^\epsilon \|_{L^p(0,1)} \le C \| f^\epsilon \|_{L^p(\Omega^\epsilon)}
\end{equation}
for some $C>0$ depending only on $r$, $G_0$, $H_0$, $G_1$ and $H_1$.
A similar operator was considered in \cite{ACL1,ACL2}. We still note that $M_{\epsilon}$ is a multiple of operator $\hat f$ defined by expression \eqref{FHD}.

Under this setting we still can point out to Theorem \ref{ET} showing the following result:
\begin{lemma} \label{ANTERIOR}
Let $\{ f^\epsilon \} \subset Z_\epsilon$  be a sequence and suppose that $\| f^\epsilon \|_{Z_{\epsilon}} \leqslant C$, for some $C$ independent of $\epsilon$.
Then, there exists a subsequence such that
$$
\| L^{-1}_\epsilon f^\epsilon - E_\epsilon L^{-1}_0 M_\epsilon f^\epsilon \|_{Z_\epsilon} 
\to 0 \textrm{ as } \epsilon \to 0. 
$$
\end{lemma}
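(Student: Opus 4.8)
The plan is to exploit the fact, established in Corollary \ref{CCAE}, that $L_\epsilon^{-1}$ converges compactly to $L_0^{-1}$, together with the estimate \eqref{MEE} which makes $\{M_\epsilon f^\epsilon\}$ bounded in $Z_0$. First I would argue by contradiction: if the conclusion fails along some subsequence (not relabelled), then $\|L_\epsilon^{-1}f^\epsilon - E_\epsilon L_0^{-1}M_\epsilon f^\epsilon\|_{Z_\epsilon} \geq \delta > 0$ for all $\epsilon$ in that subsequence. Since $\|f^\epsilon\|_{Z_\epsilon} \leq C$, after rescaling we may assume $\|f^\epsilon\|_{Z_\epsilon}\leq 1$, so by the $E$-precompactness part of Corollary \ref{CCAE} the family $\{L_\epsilon^{-1}f^\epsilon\}$ is $E$-precompact; passing to a further subsequence, $L_\epsilon^{-1}f^\epsilon \stackrel{E}{\to} v_0$ for some $v_0 \in Z_0$.

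Next I would identify $v_0$. Set $u^\epsilon = L_\epsilon^{-1}f^\epsilon$, so $u^\epsilon$ solves \eqref{VFP} with right-hand side $f^\epsilon$, and $\hat f^\epsilon$ as in \eqref{FHD} satisfies $\hat f^\epsilon = p \cdot M_\epsilon f^\epsilon$. Since $\|M_\epsilon f^\epsilon\|_{L^2(0,1)} \leq C$ by \eqref{MEE}, we may extract a subsequence with $M_\epsilon f^\epsilon \rightharpoonup g$ weakly in $L^2(0,1)$, hence $\hat f^\epsilon \rightharpoonup p g =: \hat f$ weakly in $L^2(0,1)$. Theorem \ref{ET} then applies and gives $\|P_\epsilon u^\epsilon - \hat u\|_{L^2(\widetilde\Omega^\epsilon)} \to 0$ where $\hat u \in H^1(0,1)$ solves \eqref{VFPDL} with that $\hat f$; equivalently $L_0 \hat u = \hat f/p = g$, i.e. $\hat u = L_0^{-1}g$. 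Restricting to $\Omega^\epsilon$ and using $P_\epsilon u^\epsilon|_{\Omega^\epsilon} = u^\epsilon$ exactly as in the proof of Corollary \ref{CCAE}, this yields $u^\epsilon \stackrel{E}{\to} \hat u$, so $v_0 = \hat u = L_0^{-1}g$.

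It remains to handle the term $E_\epsilon L_0^{-1}M_\epsilon f^\epsilon$. Here the point is that $L_0^{-1}: Z_0 \to Z_0^{1/2} = H^1(0,1)$ is compact (it has compact resolvent, Remark \ref{SofQ}), so from $M_\epsilon f^\epsilon \rightharpoonup g$ weakly in $L^2(0,1)$ we get $L_0^{-1}M_\epsilon f^\epsilon \to L_0^{-1}g$ strongly in $L^2(0,1)$, hence in $Z_0$. Combining with the norm-convergence property $\|E_\epsilon w\|_{Z_\epsilon} \to \|w\|_{Z_0}$ of the connecting operators and continuity of $E_\epsilon$, we obtain $E_\epsilon L_0^{-1}M_\epsilon f^\epsilon \stackrel{E}{\to} L_0^{-1}g$ as well. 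Therefore
\begin{equation*}
\|L_\epsilon^{-1}f^\epsilon - E_\epsilon L_0^{-1}M_\epsilon f^\epsilon\|_{Z_\epsilon} \leq \|u^\epsilon - E_\epsilon v_0\|_{Z_\epsilon} + \|E_\epsilon v_0 - E_\epsilon L_0^{-1}M_\epsilon f^\epsilon\|_{Z_\epsilon} \to 0,
\end{equation*}
since both $v_0 = L_0^{-1}g$ and $L_0^{-1}M_\epsilon f^\epsilon \to L_0^{-1}g$; this contradicts the assumption that the distance stays bounded below by $\delta$, and the lemma follows.

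The main obstacle I anticipate is the bookkeeping around the weak-$L^2$ limit $g$ of $M_\epsilon f^\epsilon$: one has to check that the same $g$ governs both the limit of $u^\epsilon$ (via Theorem \ref{ET}, which needs $\hat f^\epsilon = p M_\epsilon f^\epsilon \rightharpoonup p g$) and the limit of $L_0^{-1}M_\epsilon f^\epsilon$, and that this is consistent along the chosen subsequence. A minor subtlety worth noting is that we only pass to a subsequence (as the statement allows), since $M_\epsilon f^\epsilon$ need not converge along the full sequence; but the contradiction argument tolerates this because we start from a fixed bad subsequence and extract within it. Everything else — the estimate \eqref{MEE}, the compactness of $L_0^{-1}$, and the $E$-convergence machinery — is already available in the excerpt.
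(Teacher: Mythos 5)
Your proof is correct and follows essentially the same route as the paper: extract a subsequence with $M_\epsilon f^\epsilon \rightharpoonup g$ in $L^2(0,1)$, apply Theorem \ref{ET} (rewriting $\hat f^\epsilon = p\, M_\epsilon f^\epsilon$) to identify the $E$-limit of $L_\epsilon^{-1}f^\epsilon$ as $L_0^{-1}g$, and then upgrade the weak convergence of $M_\epsilon f^\epsilon$ to strong convergence of $L_0^{-1}M_\epsilon f^\epsilon$ before combining by the triangle inequality. The contradiction framing and the separate appeal to $E$-precompactness from Corollary \ref{CCAE} are superfluous (the direct three-step argument already produces the required subsequence), and your explicit invocation of the compactness of $L_0^{-1}$ for the weak-to-strong step is actually more precise than the paper's bare reference to ``continuity'' of $L_0^{-1}$.
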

\begin{proof}
Since $f^\epsilon$ is uniformly bounded in $L^2(\Omega^\epsilon)$, and $M_\epsilon$ is a bounded operator, we can extract a subsequence such that $M_\epsilon f^\epsilon \rightharpoonup f_0$, w-$L^{2}(0,1)$, for some $f_0 \in L^2(0,1)$. 
Then, from Theorem \ref{ET} and Remark \ref{RLP}, we have $\| L^{-1}_\epsilon f^\epsilon -  L^{-1}_0 f_{0}\|_{L^{2}(\Omega^\epsilon)} \to 0$, as $\epsilon \to 0$. Finally, the continuity of operator $L^{-1}_0$ implies the desired result.
\end{proof}

As a consequence of Lemma \ref{ANTERIOR}, we get the main result of this section, namely, the convergence of the resolvent operators of $L_\epsilon$ and $L_0$.
\begin{corollary} \label{eq:resolconver}
There exist $\epsilon_0 > 0$, and a function $\vartheta: (0,\epsilon_0) \mapsto (0,\infty)$, 
with $\vartheta(\epsilon) \to 0$ as $\epsilon \to 0$, such that
$$
\| L^{-1}_\epsilon - E_\epsilon L^{-1}_0 M_\epsilon \|_{\mathcal{L}(Z_\epsilon)} \le \vartheta(\epsilon), 
\quad \forall \epsilon \in (0,\epsilon_0).
$$
\end{corollary}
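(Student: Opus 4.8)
Corollary \ref{eq:resolconver} asks to upgrade the subsequential convergence of Lemma \ref{ANTERIOR} to a genuine operator-norm estimate, uniform over the unit ball of $Z_\epsilon$. The plan is to argue by contradiction, exactly the standard device for converting "every sequence has a convergent subsequence" into "the quantity goes to zero."

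First I would set $\vartheta(\epsilon) = \sup\{ \|L_\epsilon^{-1} g - E_\epsilon L_0^{-1} M_\epsilon g\|_{Z_\epsilon} : g \in Z_\epsilon, \|g\|_{Z_\epsilon} \le 1\}$ and suppose, for contradiction, that $\vartheta(\epsilon) \not\to 0$ as $\epsilon \to 0$. Then there is a sequence $\epsilon_n \to 0$, a constant $c>0$, and elements $f^{\epsilon_n} \in Z_{\epsilon_n}$ with $\|f^{\epsilon_n}\|_{Z_{\epsilon_n}} \le 1$ such that $\|L_{\epsilon_n}^{-1} f^{\epsilon_n} - E_{\epsilon_n} L_0^{-1} M_{\epsilon_n} f^{\epsilon_n}\|_{Z_{\epsilon_n}} \ge c/2 > 0$ for all $n$ (taking each $f^{\epsilon_n}$ to nearly realize the supremum). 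Since $\|f^{\epsilon_n}\|_{Z_{\epsilon_n}} \le 1 \le C$, the sequence $\{f^{\epsilon_n}\}$ satisfies the hypothesis of Lemma \ref{ANTERIOR}.

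Now I would apply Lemma \ref{ANTERIOR} to $\{f^{\epsilon_n}\}$: it yields a subsequence, still denoted $\{f^{\epsilon_n}\}$, along which $\|L_{\epsilon_n}^{-1} f^{\epsilon_n} - E_{\epsilon_n} L_0^{-1} M_{\epsilon_n} f^{\epsilon_n}\|_{Z_{\epsilon_n}} \to 0$ as $n \to \infty$. This directly contradicts the lower bound $c/2$ established above. Hence $\vartheta(\epsilon) \to 0$, and choosing any $\epsilon_0 > 0$ so that $\vartheta$ is finite on $(0,\epsilon_0)$ — which follows since $\|L_\epsilon^{-1}\|_{\mathcal{L}(Z_\epsilon)} \le 1$ (as $L_\epsilon$ is positive self-adjoint with $L_\epsilon \ge I$) and $\|E_\epsilon L_0^{-1} M_\epsilon\|_{\mathcal{L}(Z_\epsilon)}$ is uniformly bounded by the uniform bounds on $\|E_\epsilon\|$, $\|L_0^{-1}\|$ and estimate \eqref{MEE} — completes the proof.

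The only subtlety, and the point I would be careful about, is the uniformity of the constants feeding into the finiteness of $\vartheta(\epsilon)$ on a fixed interval $(0,\epsilon_0)$: this uses that $\sup_{0 \le \epsilon \le 1}\|E_\epsilon\|_{\mathcal{L}(Z_0,Z_\epsilon)} < \infty$ (established before Corollary \ref{CCAE}), the operator bound \eqref{MEE} for $M_\epsilon$ with $r=2$, and boundedness of $L_0^{-1}$ on $Z_0$ from Remark \ref{SofQ}. Everything else is the bare contradiction argument; there is no real analytic obstacle here since the substantive work — the homogenization limit and the identification of $M_\epsilon$ as the correct "averaging" of the right-hand side — has already been carried out in Theorem \ref{ET} and Lemma \ref{ANTERIOR}.
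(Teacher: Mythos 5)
Your proof is correct and follows essentially the same contradiction argument as the paper: extract a sequence along which the operator norm stays bounded away from zero, then apply Lemma \ref{ANTERIOR} to produce a contradicting subsequence. The extra remarks you add on the finiteness of $\vartheta(\epsilon)$ and the uniform bounds on $E_\epsilon$, $M_\epsilon$, $L_0^{-1}$ are sound but not developed in the paper's much terser version.
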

\begin{proof}
Let us show it by contradiction.
To do so, suppose there exist a $\delta >0$ and sequences $\{ \epsilon_n \}_{n \in \N} \subset (0,\infty)$, $\epsilon_n \to 0$ as $n \to \infty$, and $\{ f^{n} \}_{n \in \N} \subset Z_{\epsilon_n}$ 
with $\| f^{n} \|_{Z_{\epsilon_n}} = 1$, such that  
$$
\| L^{-1}_{\epsilon_n} f^{n}- E_{\epsilon_n} L^{-1}_0 M_{\epsilon_n} f^{n} \|_{Z_{\epsilon_n}} \geqslant \delta, \quad \textrm{ for all } n \in \N.
$$
On the other hand, from Lemma \ref{ANTERIOR} we can extract a subsequence satisfying  
$$\| L^{-1}_{\epsilon_{n_i}} f^{n_i} - E_{\epsilon_{n_i}} L^{-1}_0 M_{\epsilon_{n_i}} f^{n_i} \|_{Z_{\epsilon_{n_i}}} \stackrel{i\to\infty}{\longrightarrow} 0$$  which give us a contradiction completing the proof.
\end{proof}

\begin{remark} \label{ECONT}
Note that Corollary \ref{CCAE} implies that $L_\epsilon$ satisfies the following condition 
\begin{center}
\emph{\bf (C)} $L_\epsilon$ is a closed operator, has compact resolvent, the number zero belongs to its resolvent set $\rho(L_\epsilon)$ for all $\epsilon \in [0,1]$, and $L^{-1}_\epsilon \stackrel{CC}{\rightarrow} L^{-1}_0$.  
\end{center}
It is known that the spectrum of $L_\epsilon$ or $L_0$, denoted by $\sigma(L_\epsilon)$ or $\sigma(L_0)$, consists only of isolated eigenvalues.
Hence, if we consider an isolated point $\lambda_{0} \in \sigma(L_0)$ and its generalized eigenspace $W(\lambda_{0},L_0) = Q(\lambda_0,L_0) Z_0$, where
$$
Q(\lambda_{0},L_0) = \frac{1}{2 \pi i} \int_{S_\delta} (\xi \, I - L_0)^{-1} d\xi,
$$
$S_\delta = \{ \xi \in \C \, | \, |\xi - \lambda_{0}| = \delta \}$
and $\delta$ is chosen small enough such that there is no other point of $\sigma(L_0)$ in the disc 
$\{ \xi \in \C \, | \, |\xi - \lambda_{0}| \le \delta \}$, then, by condition {\bf (C)} and \cite[Lemma 4.9]{ACL1}, we have that there exists $\epsilon_0 > 0$ such that 
$\rho(L_\epsilon) \supset S_\delta$ for all $\epsilon \in (0, \epsilon_0)$.
Thus, we can denote by $W(\lambda_{0},L_\epsilon) = Q(\lambda_{0},L_\epsilon) Z_\epsilon$ where
$$
Q(\lambda_{0},L_\epsilon) = \frac{1}{2 \pi i} \int_{S_\delta} (\xi \, I - L_\epsilon)^{-1} d\xi.
$$
\end{remark}

\begin{remark} \label{Tspecconv} 
Moreover, it follows from condition {\bf (C)} and \cite[Lemma 4.10]{ACL1} the following statements about spectrum convergence of operators $L_\epsilon$:
\begin{enumerate}
\item[(i)] For any $\lambda_0 \in \sigma(L_0 )$, there is a sequence $\lambda_{\epsilon} \in \sigma(L_\epsilon)$, such that $\lambda_\epsilon \to \lambda_0$ as $\epsilon \to 0$.
\item[(ii)] If  $\lambda_\epsilon \to
\lambda_0$, with $\lambda_\epsilon \in \sigma(L_\epsilon)$, then
$\lambda_0 \in \sigma(L_0 )$.
\item[(iii)] There is $\epsilon_0>0$ such that $\dim{W(\lambda_{0},L_\epsilon)}=
\dim{W(\lambda_{0},L_0 )}$ for all $0 < \epsilon \leqslant \epsilon_0$.
\item[(iv)] For any $u \in W(\lambda_0,L_0 )$, there is a sequence ${u^{\epsilon}} \in W(\lambda_0,L_\epsilon)$, such
that ${u^{\epsilon}}
\overset{E}{\longrightarrow} u$.
\item[(v)] If $u^\epsilon \in W(\lambda_{0},L_\epsilon)$
satisfies $\norma{u^\epsilon}_{Z_{\epsilon}}=1$, then $\set{u^\epsilon}$ has an $E$-convergent subsequence and any limit point of this sequence belongs to $W(\lambda_0,L_0 )$.
\end{enumerate}
Finally, we note that the first eigenvalue of $L_\epsilon$ and $L_0$ is $1$ and its associated normalized eigenfunction is the constant $|\Omega^\epsilon|^{-1/2} \to ( \int_0^1 p(x) \, dx )^{-1/2}$ as $\epsilon \to 0$ by Remark \ref{RLP}. 

\end{remark}


Now we are in condition to discuss the convergence properties of the linear semigroups generated by the operators $L_\epsilon$ and $L_0$ considered in \eqref{AEPS} and \eqref{AO} respectively. 
We proceed here as the authors in \cite{ACL3, ACPS}.
Using standard arguments discussed for example in \cite{Pazzy:92}, it is easy to see that there exists $\epsilon_0 > 0$ such that the numerical range of the operators $-L_\epsilon$ 
are contained in $(-\infty, -1] \subset \C$ for all $\epsilon \in (0,\epsilon_0)$. 
Thus, we get from \cite[Theorem 3.9]{Pazzy:92} that there exists $M>0$ and $\frac{\pi}{2}<\phi<\pi$,
independent of $\epsilon$, such that
\begin{equation}\label{Eunif-lp-est}
\|\left(\mu + L_\epsilon \right)^{-1}\|_{\mathcal{L}(Z_{\epsilon})} \leqslant \frac{M}{|\mu + 1|},
\quad \forall \mu \in \Sigma_{-1 , \phi},
\end{equation}
where $\Sigma_{-1 , \phi}=\{\mu \in \C \, | \,  0<|\text{\rm{arg}}(\mu + 1)|\leqslant \phi \}$. 
We are setting here $Z_{\epsilon}$ by $Z_0$ as $\epsilon=0$. 
Hence, the operators $L_\epsilon$ are sectorial operators for all $\epsilon \in [0,\epsilon_{0}]$, with uniform estimates in $\epsilon$ for the resolvent operators $\left(\mu - L_\epsilon \right)^{-1}$ on the sector $\C\backslash\Sigma_{1 , \pi-\phi}$.  

We also get from Remark \ref{ECONT} that, if $\lambda \in \rho(L_0)$, there exists $\epsilon_0 > 0$ such that $\lambda \in \rho(L_\epsilon)$ for all $0 \leqslant \epsilon < \epsilon_0$, and so, we can use the resolvent identity given by \cite[Lemma 3.5]{ACL3} to obtain 
$$
(\lambda -  L_\epsilon)^{-1} - E_\epsilon (\lambda - L_0)^{-1} M_\epsilon = [I - \lambda (\lambda - L_\epsilon)^{-1}] [E_{\epsilon} L_0^{-1} M_\epsilon - L_\epsilon^{-1}]
[I - \lambda E_\epsilon (\lambda - L_0)^{-1} M_\epsilon].
$$
Consequently, since \eqref{Eunif-lp-est} implies 
$$
\begin{gathered}
\| I - \lambda (\lambda - L_\epsilon)^{-1} \|_{\mathcal{L}(Z_\epsilon)} \le 1 + M, \\
\| I - \lambda E_\epsilon (\lambda - L_0)^{-1} M_\epsilon \|_{\mathcal{L}(Z_\epsilon)} \le 1 + \| E_\epsilon \| \, \| M_\epsilon \| \, M, 
\end{gathered}
$$
we have by Corollary \ref{eq:resolconver} that there exists $\vartheta:(0,\epsilon_0)\to R^+$, 
$\vartheta(\epsilon)\to 0$ as $\epsilon \to 0$, such that
\begin{equation}\label{eq:remark}
\|(\lambda - L_\epsilon)^{-1} - E_\epsilon (\lambda - L_0)^{-1} M_\epsilon \|_{\mathcal{L}(Z_{\epsilon})} \leqslant \vartheta(\epsilon). 
\end{equation}

Moreover, if $\{ \mathrm{e}^{-L_\epsilon t} \, | \,  t\geqslant 0 \}$ denote the exponentially decaying analytic semigroup  in $Z_\epsilon$ generated by the sectorial operator $L_\epsilon$, then we obtain from \cite[Theorem 1.4.3]{Henry} that for any $0< \omega < 1$, there exists a constant $C=C(\omega)$, independent of $\epsilon$, such that
\begin{equation}\label{unif-exp-decay}
\| \mathrm{e}^{-L_\epsilon t} \|_{\mathcal{L}(Z_{\epsilon},Z_{\epsilon}^\alpha)} \leqslant  C \, t^{-\alpha}\, \mathrm{e}^{-\omega t} 
\textrm{ for all } t>0, \; 0\leqslant \alpha \leqslant 1 \textrm{ and } 0 \leqslant \epsilon \leqslant \epsilon_{0}.
\end{equation}

Finally, the continuity of resolvent operators allow us to obtain the continuity of linear semigroups associated to the family of sectorial operators $\{ L_\epsilon \}_{\epsilon \geq 0}$ in appropriated spaces.

\begin{theorem}\label{Tconvsgl} 
Suppose $0 \leqslant \alpha < \frac{1}{2}$. Then there exists a function $\vartheta_\alpha:(0, \epsilon_0] \mapsto (0,\infty)$, 
$\vartheta_\alpha(\epsilon) \to 0$, as $\epsilon \to 0$, such that
$$
\| e^{-{L_\epsilon} t} - E_{\epsilon} e^{-L_0 t} M_\epsilon \|_{\mathcal{L}(Z_{\epsilon},Z_{\epsilon}^{\alpha})} 
\leqslant  \vartheta_ \alpha(\epsilon) e^{-\omega t} t^{\alpha -1}, \quad   \textrm{ for all } t > 0.
$$

Consequently, there exists a constant $K>0$, independent of $\epsilon$, such that
$$
\|P_{\epsilon} e^{-{L_\epsilon} t} - e^{-L_0 t} M_\epsilon \|_{\mathcal{L}(L^2(\Omega^\epsilon),H^{2\alpha}(\widetilde \Omega^\epsilon))} 
\leq K  \vartheta_\alpha(\epsilon) e^{-\omega t} t^{{\alpha-1}}  
\textrm{ for all } t > 0.
$$
\end{theorem}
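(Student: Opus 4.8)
The plan is to derive the semigroup estimate from the resolvent estimate \eqref{eq:remark} by writing both semigroups through the Dunford (Cauchy) integral over a common sectorial contour and estimating the difference of the integrands. First I would fix $0\le\alpha<\frac12$ and recall that, by the uniform sectoriality established above, there is $\epsilon_0>0$ and a contour $\Gamma\subset\C\setminus\Sigma_{1,\pi-\phi}$, of the form $\Gamma=\{1+re^{\pm i\phi}:r\ge\eta\}\cup\{1+\eta e^{i\theta}:|\theta|\le\phi\}$ for suitable $\eta>0$, such that for every $\epsilon\in[0,\epsilon_0]$
$$
e^{-L_\epsilon t}=\frac{1}{2\pi i}\int_\Gamma e^{\lambda t}(\lambda-(-L_\epsilon))^{-1}\,d\lambda
=\frac{1}{2\pi i}\int_\Gamma e^{\lambda t}(\lambda+L_\epsilon)^{-1}\,d\lambda,
$$
and similarly for $L_0$, where the integral converges in $\mathcal{L}(Z_\epsilon,Z_\epsilon^\alpha)$ because of \eqref{Eunif-lp-est} together with the standard bound $\|(\mu+L_\epsilon)^{-1}\|_{\mathcal{L}(Z_\epsilon,Z_\epsilon^\alpha)}\le C|\mu+1|^{\alpha-1}$, which itself follows by interpolation between \eqref{Eunif-lp-est} ($\alpha=0$) and the graph-norm bound ($\alpha=1$), uniformly in $\epsilon$.

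Next I would subtract the two representations. Since $E_\epsilon$ and $M_\epsilon$ are bounded uniformly in $\epsilon$, for $\lambda\in\rho(L_0)$ (hence in $\rho(L_\epsilon)$ for small $\epsilon$, by Remark \ref{ECONT}) we have
$$
e^{-L_\epsilon t}-E_\epsilon e^{-L_0 t}M_\epsilon
=\frac{1}{2\pi i}\int_\Gamma e^{\lambda t}\Bigl[(\lambda+L_\epsilon)^{-1}-E_\epsilon(\lambda+L_0)^{-1}M_\epsilon\Bigr]d\lambda.
$$
Here the difference of resolvents must be measured in $\mathcal{L}(Z_\epsilon,Z_\epsilon^\alpha)$, not merely in $\mathcal{L}(Z_\epsilon)$ as in \eqref{eq:remark}. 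To upgrade the operator-norm gain to the fractional-power scale I would use the algebraic identity displayed just before \eqref{eq:remark},
$$
(\lambda+L_\epsilon)^{-1}-E_\epsilon(\lambda+L_0)^{-1}M_\epsilon
=[I+\lambda(\lambda+L_\epsilon)^{-1}]\,[L_\epsilon^{-1}-E_\epsilon L_0^{-1}M_\epsilon]\,[I+\lambda E_\epsilon(\lambda+L_0)^{-1}M_\epsilon],
$$
read now with the left factor estimated in $\mathcal{L}(Z_\epsilon,Z_\epsilon^\alpha)$: by \eqref{Eunif-lp-est} and interpolation, $\|I+\lambda(\lambda+L_\epsilon)^{-1}\|_{\mathcal{L}(Z_\epsilon,Z_\epsilon^\alpha)}\le C|\lambda+1|^{\alpha}$ on $\Gamma$ (using $\alpha<1$), while the middle factor contributes $\vartheta(\epsilon)$ by Corollary \ref{eq:resolconver} and the right factor is uniformly bounded in $\mathcal{L}(Z_\epsilon)$ by \eqref{Eunif-lp-est} and the uniform bounds on $E_\epsilon$, $M_\epsilon$. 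Thus
$$
\bigl\|(\lambda+L_\epsilon)^{-1}-E_\epsilon(\lambda+L_0)^{-1}M_\epsilon\bigr\|_{\mathcal{L}(Z_\epsilon,Z_\epsilon^\alpha)}\le C\,|\lambda+1|^{\alpha}\,\vartheta(\epsilon),\qquad \lambda\in\Gamma.
$$
Inserting this into the contour integral and parametrizing $\lambda=1+re^{\pm i\phi}$, the bound $\int_\Gamma |e^{\lambda t}|\,|\lambda+1|^{\alpha}\,|d\lambda|\le C\,e^{-\omega t}t^{-\alpha}$ (valid since $\mathrm{Re}\,\lambda$ is bounded above by $-\omega$ along the tilted rays for $\omega<1$ depending on $\phi$, the classical computation behind \eqref{unif-exp-decay}) yields
$$
\|e^{-L_\epsilon t}-E_\epsilon e^{-L_0 t}M_\epsilon\|_{\mathcal{L}(Z_\epsilon,Z_\epsilon^\alpha)}\le C\,\vartheta(\epsilon)\,e^{-\omega t}\,t^{\alpha-1},
$$
which is the first assertion with $\vartheta_\alpha(\epsilon):=C\vartheta(\epsilon)$. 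For the second assertion I would simply post-compose with $P_\epsilon$: by Remark \ref{remark:exbound}, $P_\epsilon$ is bounded from $Z_\epsilon^\alpha$ into $H^{2\alpha}(\widetilde\Omega^\epsilon)$ uniformly in $\epsilon$ for $0\le\alpha\le\frac12$, and on $\Omega^\epsilon$ one has $P_\epsilon e^{-L_\epsilon t}=e^{-L_\epsilon t}$, so $P_\epsilon e^{-L_\epsilon t}-e^{-L_0 t}M_\epsilon=P_\epsilon(e^{-L_\epsilon t}-E_\epsilon e^{-L_0 t}M_\epsilon)$ on $\Omega^\epsilon$ while on $\widetilde\Omega^\epsilon\setminus\Omega^\epsilon$ it equals $P_\epsilon e^{-L_\epsilon t}$ minus the (constant-in-$x_2$) extension of $e^{-L_0 t}M_\epsilon$, all of which is controlled by $\|e^{-L_\epsilon t}-E_\epsilon e^{-L_0 t}M_\epsilon\|_{\mathcal{L}(Z_\epsilon,Z_\epsilon^\alpha)}$ up to the uniform constant $K=\|P_\epsilon\|$.

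The main obstacle I anticipate is the bookkeeping needed to upgrade Corollary \ref{eq:resolconver}, which gives the resolvent difference only in $\mathcal{L}(Z_\epsilon)$, to a bound in $\mathcal{L}(Z_\epsilon,Z_\epsilon^\alpha)$ with the correct polynomial weight $|\lambda+1|^{\alpha}$ that is integrable against $e^{\lambda t}$ along the contour; this is exactly why the factored identity is used with the smoothing placed on the outer factor rather than on the $\vartheta(\epsilon)$-small middle factor, and one must check the interpolation inequality $\|I+\lambda(\lambda+L_\epsilon)^{-1}\|_{\mathcal{L}(Z_\epsilon,Z_\epsilon^\alpha)}\le C|\lambda+1|^\alpha$ holds with a constant uniform in $\epsilon$, which follows because $I+\lambda(\lambda+L_\epsilon)^{-1}=L_\epsilon(\lambda+L_\epsilon)^{-1}$ maps $Z_\epsilon$ into $Z_\epsilon^1$ with norm $\le C|\lambda+1|$ uniformly (again from \eqref{Eunif-lp-est}) and into $Z_\epsilon$ with norm $\le 1+M$, so interpolation with exponent $\alpha$ does the job. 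Everything else — the contour estimates and the final composition with $P_\epsilon$ — is routine sectorial-operator analysis of the type already invoked for \eqref{unif-exp-decay}.
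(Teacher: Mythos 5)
Your plan of running the resolvent estimate through the Dunford integral is the right general idea, and the second half of the proposal (post-composing with $P_\epsilon$ via Remark \ref{remark:exbound}) is essentially what the paper does. But the crucial step -- upgrading the $\mathcal{L}(Z_\epsilon)$ resolvent bound to an $\mathcal{L}(Z_\epsilon,Z_\epsilon^\alpha)$ bound -- has a genuine gap. You propose to place the smoothing on the outer factor of the resolvent identity, estimating $\|I-\lambda(\lambda+L_\epsilon)^{-1}\|_{\mathcal{L}(Z_\epsilon,Z_\epsilon^\alpha)}\leq C|\lambda+1|^\alpha$ by interpolating between a $\mathcal{L}(Z_\epsilon)$ bound and a claimed bound $\|L_\epsilon(\lambda+L_\epsilon)^{-1}\|_{\mathcal{L}(Z_\epsilon,Z_\epsilon^1)}\leq C|\lambda+1|$. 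The second ingredient is false: for $u\in Z_\epsilon$ one has $L_\epsilon(\lambda+L_\epsilon)^{-1}u = u-\lambda(\lambda+L_\epsilon)^{-1}u$, and the summand $u$ is a general element of $Z_\epsilon$, not of $\mathcal{D}(L_\epsilon)=Z_\epsilon^1$. Thus $L_\epsilon(\lambda+L_\epsilon)^{-1}$ maps $Z_\epsilon$ only into $Z_\epsilon$, provides no smoothing at all, and the interpolation you invoke is vacuous. Moreover, even if the putative bound $\|(\lambda+L_\epsilon)^{-1}-E_\epsilon(\lambda+L_0)^{-1}M_\epsilon\|_{\mathcal{L}(Z_\epsilon,Z_\epsilon^\alpha)}\leq C\,\vartheta(\epsilon)\,|\lambda+1|^{\alpha}$ held, it grows in $|\lambda|$, and after the change of variable $\beta=\lambda t$ the contour integral scales like $t^{-1-\alpha}$, not $t^{\alpha-1}$; so the proposed estimate is also quantitatively too weak near $t=0$ to reproduce the claimed rate.

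The fix -- and the route the paper takes -- is to interpolate the \emph{whole} resolvent difference rather than a single factor of the identity. One directly shows the two compatible bounds
$$
\|(\mu+B_\epsilon)^{-1}-E_\epsilon(\mu+B_0)^{-1}M_\epsilon\|_{\mathcal{L}(Z_\epsilon)}\leq \vartheta(\epsilon)
\quad\text{and}\quad
\|(\mu+B_\epsilon)^{-1}-E_\epsilon(\mu+B_0)^{-1}M_\epsilon\|_{\mathcal{L}(Z_\epsilon,Z_\epsilon^{1/2})}\leq \frac{C}{|\mu|^{1/2}},
$$
the first being Corollary \ref{eq:resolconver} via the resolvent identity, the second obtained term by term with Moment's inequality applied to $(\mu+B_\epsilon)^{-1}$ and to $E_\epsilon(\mu+B_0)^{-1}M_\epsilon$ separately. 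Applying Moment's inequality once more to the difference then yields
$$
\|(\mu+B_\epsilon)^{-1}-E_\epsilon(\mu+B_0)^{-1}M_\epsilon\|_{\mathcal{L}(Z_\epsilon,Z_\epsilon^{\alpha})}
\leq \frac{C\,\vartheta(\epsilon)^{1-2\alpha}}{|\mu|^{\alpha}},
$$
which is both $\epsilon$-small (for $\alpha<\tfrac12$, explaining the restriction on $\alpha$) and integrable along the contour with the correct scaling $t^{\alpha-1}$. Note that this is precisely where the hypothesis $\alpha<\tfrac12$ enters: the $Z_\epsilon^{1/2}$ bound carries no $\vartheta(\epsilon)$-smallness, so the exponent $1-2\alpha$ on $\vartheta(\epsilon)$ must be positive. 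Your proposal, by contrast, offers no visible role for $\alpha<\tfrac12$, which is itself a symptom that the argument is not closing.
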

\begin{proof} 
For any sectorial operators as ${L_\epsilon}$, it is known that for any $0<\bar\omega< 1$    
$$
e^{(-{L_\epsilon} + \bar\omega I)t} = \frac{1}{2\pi i}
\underset{\tilde\Gamma}\int e^{(\mu + \bar\omega) t} (\mu+\bar\omega
+ {L_\epsilon} - \bar\omega )^{-1} \mathrm{d}\mu, 
$$
where $\tilde\Gamma$ is the oriented border of the sector $\Sigma_{-1,\phi}=\{\mu \in \C: |\mathrm{arg}(\mu+1)|\leq \phi \}$, $\frac{\pi}{2} <\phi< \pi$, such that the imaginary part of $\mu$ increases when $\mu$ describes the curve $\tilde\Gamma$.
We perform a changing of variable $\mu+\bar\omega \mapsto \mu$ and call  
${B_{\epsilon}}  := {L_\epsilon} - \bar\omega$ in order to evaluate 
\begin{equation}\label{intsgl1} 
2 \pi  \| e^{-B_{\epsilon} t} u^\epsilon -  E_{\epsilon} e^{-B_0 t} M_\epsilon u^\epsilon \|_{Z_{\epsilon}^{\alpha}} 
= \left\| \underset{\Gamma_0} \int e^{\mu t} [(\mu+{B_\epsilon})^{-1}  u^\epsilon -
 E_{\epsilon} (\mu+B_0)^{-1} M_\epsilon u^\epsilon] \mathrm{d} \mu \right\|_{Z_{\epsilon}^{\alpha}}
\end{equation}
where $\Gamma_0$ is the border of $\Sigma_{0,\phi}$.
For this, let us first collect some estimates involving $B_\epsilon$.

Due to \eqref{Eunif-lp-est}, we get for all $\mu \in \Gamma_0$ and $\epsilon \in [0,\epsilon_0]$ that $\| (\mu+{B_\epsilon})^{-1}\|_{\mathcal{L}(Z_{\epsilon})} \leq {\frac{C}{|\mu|}}$, and then,    
\begin{eqnarray}\label{eq:mu1}
\| (\mu+{B_\epsilon})^{-1} u^\epsilon - E_{\epsilon} (\mu+B_0)^{-1} M_\epsilon u^\epsilon \|_{Z_{\epsilon}}  
& \leq & \frac{C + \|E_\epsilon\| \, \| M_\epsilon \|}{|\mu|} \|u\|_{Z_\epsilon} \nonumber \\
& \leq & \frac{C_1}{|\mu|} \|u\|_{Z_\epsilon}.
\end{eqnarray}
We also have that 
\begin{eqnarray*}
\| B_{\epsilon} (\mu+{B_\epsilon})^{-1} u^\epsilon \|_{Z_\epsilon} 
& = & \| (I-\mu (\mu+{B_\epsilon})^{-1}) u^\epsilon \|_{Z_\epsilon} \\
& \leq & \| u^\epsilon \|_{Z_\epsilon} + |\mu| \| (\mu+{B_\epsilon})^{-1} u^\epsilon \|_{Z_\epsilon}  \\
& \leq & (1 + C) \| u^\epsilon \|_{Z_\epsilon}.
\end{eqnarray*}

Now, using Moment's Inequality from \cite[Section 1.4]{Henry}, we get 
\begin{eqnarray*}
\|B_{\epsilon}^{1/2} (\mu+{B_\epsilon})^{-1} u^\epsilon \|_{Z_{\epsilon}} 
& \leq & \| (\mu+{B_\epsilon})^{-1} u^\epsilon \|_{Z_{\epsilon}}^{{1/2}} \,
\| (\mu+{B_\epsilon})^{-1} u^\epsilon \|_{Z_{\epsilon}^1}^{{1/2}}  \\
& \leq & \frac{C^{{1/2}}}{|\mu|^{{1/2}}} (1 + C)^{1/2} \|u^\epsilon\|_{Z_\epsilon} . 
\end{eqnarray*}
Consequently, since for each $u^\epsilon \in Z_{\epsilon}$, $(\mu+B_{0})^{-1} M_\epsilon u^\epsilon \in \mathcal{D}(L_0) \subset H^2(0,1)$, we also obtain, 
\begin{eqnarray*}
\|B_{\epsilon}^{1/2} E_{\epsilon}(\mu+B_{0})^{-1} M_\epsilon u^\epsilon \|_{Z_{\epsilon}} &
\leq & (H_1+G_1)^{1/2} \|B_{0}^{1/2} (\mu+B_{0})^{-1} M_\epsilon u^\epsilon \|_{Z_{0}}  \\
& \leq & (H_1+G_1)^{1/2} \frac{C^{1/2}}{|\mu|^{1/2}} (1 + C)^{1/2} \|M_\epsilon\| \, \|u^\epsilon\|_{Z_\epsilon}.
\end{eqnarray*}

Thus, we can conclude that  
\begin{equation}\label{eq:interest}
\| (\mu+{B_\epsilon})^{-1} u^\epsilon - E_{\epsilon} (\mu+B_0)^{-1} M_\epsilon u^\epsilon \|_{Z_{\epsilon}^{1/2}} \leq
\frac{C_2}{|\mu|^{1/2}} \|u^\epsilon \|_{Z_\epsilon}.
\end{equation} 

Next let us denote $x=(\mu+{B_\epsilon})^{-1} u^\epsilon -   E_{\epsilon} (\mu+B_0)^{-1} M_\epsilon u^\epsilon$. Again using Moment's Inequality
\begin{align*}
 \| x\|_{Z_{\epsilon}^\alpha} & \leq C_{3} \|x\|_{Z_{\epsilon}^{1/2}}^{2\alpha} \|x\|_{Z_{\epsilon}}^{1-2\alpha} .
\end{align*}

Therefore, due to estimates \eqref{eq:remark}, \eqref{eq:mu1} and \eqref{eq:interest}, we get for $0 \leq \alpha \leq 1/2$ that 
\begin{equation}\label{eq:stimaresol}
\| (\mu+{B_\epsilon})^{-1} - E_{\epsilon} (\mu+B_0)^{-1} M_\epsilon \|_{\mathcal{L}(Z_{\epsilon},Z_{\epsilon}^\alpha)}\leqslant \frac{C_{3} \,  \vartheta(\epsilon)^{(1 - 2 \alpha)}}{| \mu|^{\alpha}}.
\end{equation}

Now performing the change of variable $\beta = \mu t$ in the integral given by \eqref{intsgl1} we get
$$
\left\| \underset{\Gamma_0}\int e^\beta
\left[\left({\beta t^{-1}}+{B_{\epsilon}}\right)^{-1} E_{\epsilon} u -
E_{\epsilon} \left({\beta t^{-1}}+B_0
\right)^{-1} u \right]\frac{d\beta}{t} \right\|_{Z_{\epsilon}^{\alpha}}.
$$
Hence, it follows from \eqref{eq:stimaresol} that
\begin{align*}
& \;\Big\| t^{-1}
\underset{\Gamma_0} \int e^\beta \big[\left({\beta
t^{-1}}+{B_{\epsilon}}\right)^{-1} 
- E_{\epsilon} \left({\beta t^{-1}}+B_0\right)^{-1} M_\epsilon  \big]\mathrm{d} \beta \Big \|_{\mathcal{L}(Z_\epsilon,Z_{\epsilon}^{\alpha})}  \leq  C_{3} \, t^{{\alpha -1}} \vartheta(\epsilon)^{(1 - 2 \alpha)} 
\underset{\Gamma_0} \int \frac{|e^\beta|}{|\beta|^{\alpha}}
\mathrm{d} |\beta|,
\end{align*}
and then,
$$
\|e^{-{B_{\epsilon}} t}  - E_{\epsilon} e^{-{B_0} t} M_\epsilon \|_{\mathcal{L}(Z_\epsilon,Z_{\epsilon}^{\alpha})} 
\leq  C_{4} t^{{\alpha -1}} \vartheta(\epsilon)^{(1 - 2 \alpha)}, \quad t > 0.
$$

Consequently, for all $\alpha \in [0,{1/2})$ and $\omega \in (0,1)$, there exists a function $\vartheta_\alpha: (0,\epsilon_0] \to \R^+$ with $\vartheta_\alpha(\epsilon) \overset{\epsilon \to 0} \longrightarrow 0$ such that
$$
\| e^{-{L_\epsilon} t} - E_{\epsilon} e^{-L_0 t} M_\epsilon \|_{\mathcal{L}(Z_{\epsilon},Z_{\epsilon}^{\alpha})} 
\leq \vartheta_\alpha(\epsilon) e^{-\omega t} t^{{\alpha -1}} \textrm{ for all } t > 0.
$$

Finally, we conclude the proof noting Remark \ref{remark:exbound} implies the existence of $K$ such that
\begin{eqnarray} \label{eq:extsegl}
\|P_{\epsilon} e^{-{L_\epsilon} t} -  e^{-L_0 t} M_\epsilon  \|_{\mathcal{L} (Z_\epsilon,H^{2\alpha}(\widetilde \Omega^\epsilon))}  & = & \|P_{\epsilon} e^{-{L_\epsilon} t} - P_{\epsilon} E_{\epsilon} e^{-L_0 t} M_\epsilon \|_{\mathcal{L}(Z_\epsilon,H^{2\alpha}(\widetilde \Omega^\epsilon))} \nonumber \\
& \leqslant & \|P_{\epsilon}\|_{\mathcal{L}(Z_{\epsilon}^\alpha,H^{2\alpha}(\widetilde \Omega^\epsilon))} \| e^{-{L_\epsilon} t} - E_{\epsilon} e^{-L_0 t} M_\epsilon \|_{\mathcal{L}(Z_\epsilon,Z_\epsilon^\alpha)}  \nonumber \\
& \leqslant & K \| e^{-{L_\epsilon} t} - E_{\epsilon} e^{-L_0 t} M_\epsilon \|_{\mathcal{L}(Z_\epsilon,Z_\epsilon^\alpha)}.
\end{eqnarray}

\end{proof}

\begin{corollary}\label{Cconvsgl}
Suppose $0\leqslant \alpha < {1/2}$ and $u^{\epsilon} \stackrel{E}{\longrightarrow} u$. Then there is a
function $\vartheta:{(0, \epsilon_0]} \mapsto (0,\infty),\; \vartheta(\epsilon ) \to 0$, as $\epsilon \to 0$, such that
\begin{equation}\label{SGunif-continuity-compact}
\norma{e^{-{L_\epsilon} t} u^{\epsilon} - E_{\epsilon} e^{-L_0 t} u}_{Z_{\epsilon}^{\alpha}} 
\leq \vartheta(\epsilon) e^{-\omega t} t^{\alpha-1}, \quad \textrm{ for all } t > 0.
\end{equation}
\end{corollary}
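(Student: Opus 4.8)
The plan is to reduce the statement to Theorem \ref{Tconvsgl} plus a compactness argument for the limiting semigroup. Decompose
\[
e^{-L_\epsilon t} u^{\epsilon} - E_{\epsilon} e^{-L_0 t} u
= \big(e^{-L_\epsilon t} - E_{\epsilon} e^{-L_0 t} M_\epsilon\big)u^{\epsilon}
\;+\; E_{\epsilon}\, e^{-L_0 t}\big(M_\epsilon u^{\epsilon} - u\big).
\]
For the first term I would invoke Theorem \ref{Tconvsgl} directly. Since $u^{\epsilon}\stackrel{E}{\longrightarrow} u$ we have $\|u^{\epsilon}\|_{Z_\epsilon}\to\|u\|_{Z_0}$, so $\|u^{\epsilon}\|_{Z_\epsilon}\le C$ uniformly, and therefore
\[
\big\|\big(e^{-L_\epsilon t} - E_{\epsilon} e^{-L_0 t} M_\epsilon\big)u^{\epsilon}\big\|_{Z_{\epsilon}^{\alpha}}
\le \vartheta_\alpha(\epsilon)\,e^{-\omega t}\,t^{\alpha-1}\,\|u^{\epsilon}\|_{Z_\epsilon}
\le C\,\vartheta_\alpha(\epsilon)\,e^{-\omega t}\,t^{\alpha-1}.
\]

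For the second term the key observation is that $v^{\epsilon}:=M_\epsilon u^{\epsilon}-u$ need not converge \emph{strongly} in $Z_0$, but it does converge \emph{weakly} to zero and stays bounded. Indeed $M_\epsilon u^{\epsilon}=M_\epsilon(u^{\epsilon}-E_\epsilon u)+M_\epsilon E_\epsilon u$; the first summand tends to $0$ in $Z_0$ by \eqref{MEE} and $\|u^{\epsilon}-E_\epsilon u\|_{Z_\epsilon}\to 0$, while $M_\epsilon E_\epsilon u=p^{-1}(H_\epsilon+G_\epsilon)u$ satisfies $M_\epsilon E_\epsilon u\rightharpoonup u$ weakly in $Z_0=L^2(0,1)$, because $H_\epsilon+G_\epsilon\rightharpoonup p$ in $w^{*}$-$L^\infty(0,1)$ (Remark \ref{RLP}) and $p^{-1}u\psi\in L^1(0,1)$ for every $\psi\in L^2(0,1)$. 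Hence $\sup_\epsilon\|v^{\epsilon}\|_{Z_0}<\infty$ and $v^{\epsilon}\rightharpoonup 0$. Since $L_0$ is self-adjoint with compact resolvent (Remark \ref{SofQ}), the operator $e^{-L_0 s}$ is compact for every $s>0$, so it turns the weakly null bounded sequence $\{v^{\epsilon}\}$ into a strongly null one; combined with the uniform bound $\|E_\epsilon\|_{\mathcal L(Z_0^{\alpha},Z_\epsilon^{\alpha})}\le C$ (established before Corollary \ref{CCAE}), it remains only to estimate $\|e^{-L_0 t}v^{\epsilon}\|_{Z_0^{\alpha}}$ by $\vartheta(\epsilon)e^{-\omega t}t^{\alpha-1}$ \emph{uniformly in $t>0$}. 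This uniformity is the main obstacle. The clean way is to note that, since the embedding $Z_0^{\beta}\hookrightarrow Z_0$ is compact for $\beta:=1-2\alpha>0$, its adjoint $Z_0\hookrightarrow (Z_0^{\beta})'$ is compact, hence $\|v^{\epsilon}\|_{(Z_0^{\beta})'}\to0$; then the analytic-semigroup smoothing estimate on the extended scale gives $\|e^{-L_0 t}\|_{\mathcal L((Z_0^{\beta})',Z_0^{\alpha})}\le C\,t^{-(\alpha+\beta)}e^{-\omega t}=C\,t^{\alpha-1}e^{-\omega t}$ (here $\alpha+\beta=1-\alpha$), so $\|e^{-L_0 t}v^{\epsilon}\|_{Z_0^{\alpha}}\le C\,t^{\alpha-1}e^{-\omega t}\|v^{\epsilon}\|_{(Z_0^{\beta})'}$. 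Alternatively, staying within the scale $Z_0^{\alpha}$, $0\le\alpha\le1$, one splits $t\in(0,\delta]$, where \eqref{unif-exp-decay} gives $\|e^{-L_0 t}v^{\epsilon}\|_{Z_0^{\alpha}}\le C t^{-\alpha}e^{-\omega t}\le C\delta^{1-2\alpha}t^{\alpha-1}e^{-\omega t}$ (small uniformly in $\epsilon$ once $\delta$ is small, using $\alpha<\tfrac12$), and $t\in[\delta,\infty)$, where one writes $e^{-L_0 t}=e^{-L_0(t-\delta/2)}e^{-L_0\delta/2}$, uses compactness of $e^{-L_0\delta/2}$ to make $\sigma_\epsilon:=\|e^{-L_0\delta/2}v^{\epsilon}\|_{Z_0}\to0$, and then, since $t-\delta/2\ge t/2$, bounds $\|e^{-L_0 t}v^{\epsilon}\|_{Z_0^{\alpha}}\le C\,t^{-\alpha}e^{-\omega t/2}\sigma_\epsilon\le C'\sigma_\epsilon\,t^{\alpha-1}e^{-\omega' t}$ for $\omega'<\omega/2$ (the factor $t^{1-2\alpha}e^{-(\omega/2-\omega')t}$ being bounded because $1-2\alpha>0$).

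Putting the two terms together, one obtains
\[
\big\|e^{-L_\epsilon t}u^{\epsilon}-E_{\epsilon}e^{-L_0 t}u\big\|_{Z_{\epsilon}^{\alpha}}
\le \big(C\,\vartheta_\alpha(\epsilon)+C\,\vartheta_2(\epsilon)\big)\,e^{-\omega t}\,t^{\alpha-1},\qquad t>0,
\]
with $\vartheta_2(\epsilon):=\sup_{t>0}e^{\omega t}t^{1-\alpha}\|e^{-L_0 t}v^{\epsilon}\|_{Z_0^{\alpha}}\to0$; setting $\vartheta(\epsilon)$ equal to the coefficient above finishes the proof. (If one has used a slightly smaller exponent $\omega'$ in the argument, it may be renamed $\omega$: both Theorem \ref{Tconvsgl} and \eqref{unif-exp-decay} hold for every exponent in $(0,1)$, so one may start from a slightly larger one.)
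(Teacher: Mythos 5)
Your decomposition is exactly the one the paper uses, but the paper's proof is a single sentence that rests on the identity $M_\epsilon u^\epsilon - u = M_\epsilon(u^\epsilon - E_\epsilon u)$, i.e.\ on $M_\epsilon E_\epsilon = I$; with the definitions given ($M_\epsilon = p^{-1}\int_{-G_\epsilon}^{H_\epsilon}\cdot\,ds$ and $(E_\epsilon u)(x_1,x_2)=u(x_1)$) this is false, since $M_\epsilon E_\epsilon u = \tfrac{H_\epsilon+G_\epsilon}{p}\,u$ and $H_\epsilon+G_\epsilon$ only converges to $p$ weak-$*$, not pointwise. If that identity held, the second term would be $\le C t^{-\alpha}e^{-\omega t}\|M_\epsilon(u^\epsilon-E_\epsilon u)\|_{Z_0}$ and the corollary would indeed be a one-line consequence of \eqref{unif-exp-decay} and \eqref{MEE} after absorbing $t^{1-2\alpha}$ into a slightly smaller $\omega$; as it stands, the paper's argument has a real gap at precisely the point you flag.

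Your proof is a genuine repair. You correctly observe that $v^\epsilon = M_\epsilon u^\epsilon - u$ is only weak-$L^2$ null (bounded and $\rightharpoonup 0$), and you supply the missing ingredient: compactness of $e^{-L_0 s}$ for $s>0$ (Remark \ref{SofQ}, compact resolvent) turns weak-null into strong-null, and either the negative-scale estimate ($v^\epsilon\to0$ in $(Z_0^{\beta})'$, then $\|e^{-L_0 t}\|_{\mathcal L((Z_0^{\beta})',Z_0^\alpha)}\lesssim t^{\alpha-1}e^{-\omega t}$ with $\beta=1-2\alpha$) or the $t\le\delta$ / $t\ge\delta$ split gives the required $\vartheta(\epsilon)\,t^{\alpha-1}e^{-\omega t}$ uniformly in $t$, using $\alpha<\tfrac12$ both to control the short-time factor $\delta^{1-2\alpha}$ and to absorb $t^{1-2\alpha}$ into a slight decrease of $\omega$. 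Both routes are sound. So while you follow the same triangle-inequality split as the paper, your treatment of the second term is a real addition rather than a restatement, and it is the part that actually makes the corollary hold; you also handle the $\omega$-absorption detail that the paper glosses over.
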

\begin{proof}
It is a direct consequence of Theorem \ref{Tconvsgl}, and estimatives \eqref{unif-exp-decay} and \eqref{MEE}, since 
$$
\| e^{-L_\epsilon t} u^{\epsilon} - E_{\epsilon} e^{-L_0 t} u \|_{Z_{\epsilon}^{\alpha}} 
\leqslant \| e^{-L_\epsilon t} u^{\epsilon} - E_\epsilon e^{-L_0 t} M_{\epsilon } u^\epsilon \|_{Z_{\epsilon}^{\alpha}}  
+ \| E_\epsilon e^{-L_0 t} \left( M_{\epsilon } u^\epsilon - u\right) \|_{Z_{\epsilon}^{\alpha}}, 
$$
and $M_\epsilon u^\epsilon - u = M_\epsilon \left( u^\epsilon - E_\epsilon u \right)$.
\end{proof}


\section{Upper semicontinuity of attractors and the set of equilibria} \label{S-USC}

Let $f:\R \mapsto \R$ be a bounded $\mathcal{C}^2$-function with bounded derivatives up to second order also satisfying the dissipative condition \eqref{HF}. Let us also consider the perturbed thin domain $\Omega^\epsilon$ defined in \eqref{domain} by the functions $G_\epsilon$ and $H_\epsilon$ introduced in Section \ref{SBF}.

In the previous sections, we have studied the behavior of the linear parts of problem \eqref{RP} 
as $\epsilon$ tends to zero and we have proved results on the continuity of the linear semigroups associated to \eqref{RP} and \eqref{LRP}. 
It is known that under these growth and dissipative conditions the solutions of problems \eqref{RP} and \eqref{LRP} are globally defined, and so, we can associate to them the nonlinear semigroups $\{T_\epsilon(t) \, | \, t\geq 0\}$ and $\{T_0(t) \, | \, t\geq 0\}$, well defined in $H^{2 \alpha}(\Omega^\epsilon)$ and $H^{2 \alpha}(0,1)$ respectively, for all $0 \leq \alpha \leq 1/2$ and $t>0$.
These dynamical systems are gradient and possess a family of compact global attractors $\{ \mathscr{A}_{\epsilon} \, | \, \epsilon \in [0,\epsilon_0] \} $, $\mathscr{A}_{\epsilon} \subset Z_\epsilon$ and $\mathscr{A}_0 \subset Z_0$ which lie in more regular spaces, namely $L^\infty(\Omega^\epsilon)$ and $L^\infty(0,1)$. Also, we can rewrite \eqref{RP} and \eqref{LRP} in the abstract form 
$$
\begin{gathered}
\left\{
\begin{array}{l}
\dot{u}^\epsilon + L_\epsilon u^\epsilon = \hat f_\epsilon(u^\epsilon) \\
u^\epsilon(0) =  u_0^\epsilon \in Z_\epsilon^\alpha
\end{array}
\right.
\quad \textrm{ and } \quad 
\left\{
\begin{array}{l}
\dot{u} + L_0 u = \hat f_0(u) \\
u(0) =  u_0 \in Z_0^\alpha
\end{array}
\right.
\end{gathered}
$$ 
where $\hat f_\epsilon: Z_\epsilon^\alpha \mapsto Z_\epsilon: u^\epsilon \to f(u^\epsilon)$ is the Nemitsk\u{\i}i operator defined by $f$ (see \cite{ACB,HR}).

In this section, we are in condition to relate the continuity of the linear semigroups with the
continuity of the nonlinear semigroups using the variation of constants formula establishing at the end the upper semicontinuity of the family of attractors, as well as, the upper semicontinuity of the set of stationary states at $\epsilon=0$.

\begin{theorem} \label{USCont}
Suppose $0 \leqslant \alpha < {1/2}$, and let $u^\epsilon \in Z_\epsilon$ satisfying
\begin{equation} \label{LUE}
\| u^\epsilon \|_{Z_\epsilon} \leq C
\end{equation}
for some positive constant $C$ independent of $\epsilon$.

Then, for each $\tau > 0$, there exists a function 
$\bar \vartheta_\alpha:(0, \epsilon_0] \to (0,\infty),\; \bar \vartheta_\alpha(\epsilon) \to 0$, as $\epsilon \to 0$, such that
\begin{equation}\label{eq:nlsg}
\| T_\epsilon(t)u^\epsilon - E_\epsilon T_0(t) M_\epsilon u^\epsilon \|_{Z_\epsilon^\alpha} 
\leq \bar \vartheta_\alpha(\epsilon)  t^{{\alpha -1}} 
\end{equation}
for all $t \in (0, \tau)$.

Moreover, we have the family of attractors $\{ \mathscr{A}_\epsilon \, | \, \epsilon \in [0, \epsilon_0] \}$ of problems \eqref{RP} and \eqref{LRP} is upper semicontinuous at $\epsilon = 0$ in $Z_\epsilon^\alpha$, in the sense that
\begin{equation} \label{USCT}
\sup_{\varphi^\epsilon \in \mathscr{A}_\epsilon} 
\Big[ \inf_{\varphi \in \mathscr{A}_0} \left\{ \| \varphi^\epsilon - E_\epsilon \varphi \|_{Z_\epsilon^\alpha}  \right\} \Big]
\to 0, 
\textrm{ as } \epsilon \to 0.
\end{equation}
Also, if we call $\mathcal{E}_\epsilon$ the set of stationary states of problems \eqref{RP}, for $\epsilon \in (0,\epsilon_0]$, and \eqref{LRP}, for $\epsilon=0$, then the family of sets $\{ \mathcal{E}_\epsilon \, | \, \epsilon \in [0, \epsilon_0] \}$ is upper semicontinuous at $\epsilon =0$, that is, 
\begin{equation} \label{USCE}
\sup_{\varphi^\epsilon \in \mathcal{E}_\epsilon} 
\Big[ \inf_{\varphi \in \mathcal{E}_0} \left\{ \| \varphi^\epsilon - E_\epsilon \varphi \|_{Z_\epsilon^\alpha}  \right\} \Big]
\to 0, 
\textrm{ as } \epsilon \to 0.
\end{equation}

Consequently there exists a constant $K$ independent of $\epsilon$ such that
\begin{equation}\label{eq:extsenl}
\|P_{\epsilon} T_\epsilon(t)u^\epsilon - T_0(t) M_\epsilon u^\epsilon \|_{H^{2 \alpha}(\widetilde \Omega^\epsilon)}  \leq K \bar \vartheta_\alpha(\epsilon)  t^{{2\alpha -1}}
\end{equation}
for all $t \in (0, \tau)$ and all $0\leqslant \alpha < {1/2}$. 
Furthermore,
\begin{equation} \label{EUSCT1}
\sup_{\varphi^\epsilon \in \mathscr{A}_\epsilon} 
\Big[ \inf_{\varphi \in \mathscr{A}_0} \left\{ \| P_{\epsilon} \varphi^\epsilon -  \varphi \|_{H^{2\alpha}(\widetilde \Omega^\epsilon)}  \right\} \Big]
\to 0, 
\textrm{ as } \epsilon \to 0,
\end{equation}
and 
\begin{equation} \label{EUSCE}
\sup_{\varphi^\epsilon \in \mathcal{E}_\epsilon} 
\Big[ \inf_{\varphi \in \mathcal{E}_0} \left\{ \| P_{\epsilon} \varphi^\epsilon -  \varphi \|_{H^{2\alpha}(\widetilde \Omega^\epsilon)}  \right\} \Big]
\to 0, 
\textrm{ as } \epsilon \to 0.
\end{equation}
\end{theorem}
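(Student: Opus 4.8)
The plan is to follow the classical route for upper semicontinuity of attractors under singular perturbations: establish continuity of the \emph{nonlinear} semigroups on finite time intervals via the variation of constants formula (bootstrapping the linear estimate of Theorem \ref{Tconvsgl}), then use the uniform bound on the attractors together with their gradient structure to pass from finite-time continuity to upper semicontinuity of the attractors and of the equilibria. First I would write, for an initial datum $u^\epsilon \in Z_\epsilon$ with $\|u^\epsilon\|_{Z_\epsilon}\le C$,
$$
T_\epsilon(t)u^\epsilon = e^{-L_\epsilon t}u^\epsilon + \int_0^t e^{-L_\epsilon(t-s)}\hat f_\epsilon(T_\epsilon(s)u^\epsilon)\,ds
$$
and the analogous identity for $T_0(t)M_\epsilon u^\epsilon$. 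Subtracting, I would insert $E_\epsilon$ and $M_\epsilon$ in the right places and split the difference into the linear term, which is controlled by Theorem \ref{Tconvsgl}, and the nonlinear term, which splits further into a piece estimated by the linear convergence estimate applied to $\hat f_\epsilon(T_\epsilon(s)u^\epsilon)$ (uniformly bounded in $Z_\epsilon$ because $f$ is bounded), and a piece of the form $E_\epsilon e^{-L_0(t-s)}\big(M_\epsilon\hat f_\epsilon(T_\epsilon(s)u^\epsilon) - \hat f_0(T_0(s)M_\epsilon u^\epsilon)\big)$. The latter is handled using the Lipschitz bound on $f$ (so that $\hat f$ is Lipschitz on $Z^\alpha$), the fact that $M_\epsilon$ commutes appropriately with the Nemytskii operator up to the averaging built into $M_\epsilon$ and $E_\epsilon$, and the bound $\|M_\epsilon v\|\le C\|v\|$ from \eqref{MEE}. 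This yields a singular Gronwall inequality of the form $\Phi(t)\le \vartheta_\alpha(\epsilon)\,t^{\alpha-1} e^{-\omega t} + C\int_0^t (t-s)^{\alpha-1}e^{-\omega(t-s)}\Phi(s)\,ds$ for $\Phi(t)=\|T_\epsilon(t)u^\epsilon - E_\epsilon T_0(t)M_\epsilon u^\epsilon\|_{Z_\epsilon^\alpha}$, whose generalized Gronwall lemma (see \cite[Section 7]{Henry}) gives \eqref{eq:nlsg}. The extension estimate \eqref{eq:extsenl} then follows exactly as \eqref{eq:extsegl} did, by applying the uniformly bounded operator $P_\epsilon$ and Remark \ref{remark:exbound}.

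Next I would deduce the upper semicontinuity of the attractors. Since each $\mathscr A_\epsilon$ is invariant and uniformly bounded (in $L^\infty$, hence in $Z_\epsilon$, by \cite{ACL1,ACB}), given $\varphi^\epsilon\in\mathscr A_\epsilon$ and $\tau>0$ there is $\psi^\epsilon\in\mathscr A_\epsilon$ with $\varphi^\epsilon = T_\epsilon(\tau)\psi^\epsilon$. Passing to a subsequence, $M_\epsilon\psi^\epsilon$ converges weakly in $Z_0$, and using the compact convergence of $L_\epsilon^{-1}$ (Corollary \ref{CCAE}) together with smoothing of the semigroup one upgrades this to $\psi^\epsilon \stackrel{E}{\to}\psi$ for some $\psi\in Z_0$; the attraction property and invariance force $\psi\in\mathscr A_0$. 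Then \eqref{eq:nlsg} with $t=\tau$ gives $\|\varphi^\epsilon - E_\epsilon T_0(\tau)\psi\|_{Z_\epsilon^\alpha}\to 0$ with $T_0(\tau)\psi\in\mathscr A_0$, which is precisely the sequential characterization of $E$-upper semicontinuity recorded in Definition \ref{USCC}. This yields \eqref{USCT}, and \eqref{EUSCT1} follows by applying $P_\epsilon$ and using Remark \ref{remark:exbound} (with $2\alpha<1$). For the equilibria, a stationary solution $\varphi^\epsilon\in\mathcal E_\epsilon$ satisfies $\varphi^\epsilon = L_\epsilon^{-1}\hat f_\epsilon(\varphi^\epsilon)$; since $\hat f_\epsilon(\varphi^\epsilon)$ is uniformly bounded in $Z_\epsilon$, Corollary \ref{CCAE} (compact convergence of the resolvents) gives an $E$-convergent subsequence $\varphi^\epsilon\stackrel{E}{\to}\varphi$, and passing to the limit in the fixed-point relation using $L_\epsilon^{-1}\stackrel{CC}{\to}L_0^{-1}$ and continuity of $f$ shows $\varphi = L_0^{-1}\hat f_0(\varphi)$, i.e. $\varphi\in\mathcal E_0$; this gives \eqref{USCE} and then \eqref{EUSCE}.

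The main obstacle I anticipate is the bookkeeping in the nonlinear term of the variation of constants formula: one must carefully track how $M_\epsilon$ interacts with the Nemytskii operator $\hat f_\epsilon$ — precisely, that $M_\epsilon \hat f_\epsilon(v) = \frac{1}{p}\int_{-G_\epsilon}^{H_\epsilon} f(v)\,ds$ is close to $\hat f_0(M_\epsilon v)/\text{(normalization)}$ only up to controlling the oscillation of $v$ in the vertical variable — and to combine this with the singular factor $t^{\alpha-1}$ so that the Gronwall argument still closes on the finite interval $(0,\tau)$. The elliptic/linear continuity has already absorbed the homogenization difficulty; once the linear estimate \eqref{eq:interest}–\eqref{eq:stimaresol} is in hand, the remaining work is the by-now-standard abstract machinery of \cite{Henry,ACL1,ACL3,ACP}, applied in the $E$-convergence framework already set up. I would present the nonlinear estimate in detail and then invoke the standard attractor continuity theorems (e.g. \cite[Chapter 3]{Hale}, \cite{ACL3}) for the set-convergence conclusions, noting only that the uniform $L^\infty$ bound on $\{\mathscr A_\epsilon\}$ is what makes the hypothesis \eqref{LUE} applicable to points of the attractor.
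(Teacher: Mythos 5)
Your proof of \eqref{eq:nlsg} and \eqref{eq:extsenl} follows the same route as the paper: variation of constants, split the nonlinear integral into a piece controlled by Theorem~\ref{Tconvsgl} and a piece controlled by the Lipschitz constant of the Nemytskii operator, close with the singular Gronwall lemma, and then apply $P_\epsilon$ using Remark~\ref{remark:exbound}. The ``obstacle'' you flag about how $M_\epsilon$ interacts with $\hat f_\epsilon$ evaporates once you use the intertwining identities $E_\epsilon \hat f_0 = \hat f_\epsilon E_\epsilon$ and $M_\epsilon E_\epsilon = I$: this rewrites the troublesome difference as $E_\epsilon e^{-L_0(t-s)} M_\epsilon\big(\hat f_\epsilon(T_\epsilon(s)u^\epsilon) - \hat f_\epsilon(E_\epsilon T_0(s) M_\epsilon u^\epsilon)\big)$, which the Lipschitz estimate handles directly; no control of vertical oscillation is required.

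For the attractors, the clean argument (and the one the paper intends, despite a slip where it writes $T_\epsilon(\tau)\varphi^\epsilon=\varphi^\epsilon$ for a general attractor point) is: fix $\eta>0$, use the uniform $L^\infty$ bound to get $\bigcup_\epsilon M_\epsilon\mathscr A_\epsilon$ bounded in $Z_0$, use attractivity of $\mathscr A_0$ to pick $\tau$ so that $T_0(\tau)M_\epsilon\psi^\epsilon$ is uniformly within $\eta/2$ of $\mathscr A_0$ for all $\psi^\epsilon\in\mathscr A_\epsilon$, write $\varphi^\epsilon=T_\epsilon(\tau)\psi^\epsilon$ by invariance, and apply \eqref{eq:nlsg}. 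No limit extraction is needed. Your additional step — upgrading weak convergence of $M_\epsilon\psi^\epsilon$ to $\psi^\epsilon\stackrel E\to\psi\in\mathscr A_0$ via compact convergence of $L_\epsilon^{-1}$ and smoothing — is both unnecessary and not actually justified as stated: points of $\mathscr A_\epsilon$ do not satisfy a fixed-point relation with $L_\epsilon^{-1}$, so Corollary~\ref{CCAE} does not apply directly, and even with parabolic smoothing you would still need to identify the limit as lying in $\mathscr A_0$, which is precisely what the attractivity argument handles without the detour. This is a genuine gap in your write-up, but one that disappears if you use the simpler route.

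For the equilibria, your argument is genuinely different from the paper's and is in fact cleaner: a stationary solution satisfies $\varphi^\epsilon = L_\epsilon^{-1}\hat f_\epsilon(\varphi^\epsilon)$ with $\hat f_\epsilon(\varphi^\epsilon)$ uniformly bounded in $Z_\epsilon$, so compact convergence of the resolvents (Corollary~\ref{CCAE}) gives an $E$-convergent subsequence and passing to the limit in the fixed-point identity places the limit in $\mathcal E_0$. The paper instead extracts a limit of $M_\epsilon u^\epsilon$ and uses \eqref{eq:nlsg} and $T_\epsilon(t)u^\epsilon = u^\epsilon$ (now legitimate, since $u^\epsilon$ is an equilibrium) to show $T_0(t)u_0=u_0$ for all $t>0$. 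Both work; the resolvent version sidesteps the semigroup machinery entirely and is somewhat more direct.
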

\begin{proof}
First we observe that \eqref{eq:extsenl}, \eqref{EUSCT1} and \eqref{EUSCE} follow from \eqref{eq:nlsg}, \eqref{USCT} and \eqref{USCE} arguing as in \eqref{eq:extsegl}. 
Next let us show \eqref{eq:nlsg}. 
Using the variation of constants formula  
$$
T_\epsilon(t)u^\epsilon = e^{-{L_\epsilon} t} u^\epsilon 
+ \int_0^t e^{-{L_\epsilon}(t-s)} \hat f_\epsilon ({T_\epsilon(s) u^\epsilon})\;ds, \quad  \textrm{ for } \epsilon \in [0, 1],
$$
we obtain 
$$
\begin{gathered}
\| {T_\epsilon(t) u_\epsilon} - E_\epsilon {T_0(t)M_\epsilon u^\epsilon} \|_{Z_\epsilon^\alpha}
 \leqslant \| e^{-{L_\epsilon} t}  u^\epsilon - E_\epsilon  e^{-L_0 t} M_\epsilon u^\epsilon \|_{Z_\epsilon^\alpha} \\
\quad \quad \quad \quad  
+ \int_0^t \| e^{-{L_\epsilon}(t-s)} \hat f_\epsilon({T_\epsilon(s) u^\epsilon}) - E_\epsilon e^{-L_0(t-s)} \hat f_0({T_0(s)M_\epsilon u^\epsilon}) \|_{Z_\epsilon^\alpha} ds.
\end{gathered}
$$
It follows from \eqref{SGunif-continuity-compact} that there exist $\epsilon_0 > 0$ and $\vartheta:(0,\epsilon_0] \mapsto (0,\infty)$,
$\vartheta \stackrel{\epsilon \to 0}{\rightarrow} 0$, such that
$$
\| e^{-{L_\epsilon} t} - E_{\epsilon} e^{-L_0 t} M_\epsilon \|_{\mathcal{L}(Z_\epsilon,Z_{\epsilon}^{\alpha})} 
\leq \vartheta(\epsilon) e^{-\omega t} t^{{\alpha -1}}, \textrm{ for } t > 0.
$$
Furthermore, we have
\begin{align*}
\int_0^t & \| e^{-{L_\epsilon}(t-s)} \hat f_\epsilon({T_\epsilon(s)u^\epsilon}) - E_\epsilon {e^{-L_0(t-s)}}
 \hat f_0({T_0(s)M_\epsilon u^\epsilon}) \|_{Z_\epsilon^\alpha} ds \\
& \leqslant \int_0^t 
\| \left( {e^{-{L_\epsilon}(t-s)}} - E_\epsilon {e^{-{L_0}(t-s)}} M_\epsilon \right) 
\hat f_\epsilon({T_{\epsilon}(s) u^\epsilon}) \|_{Z^\alpha_\epsilon} ds \\
& + \int_0^t
 \| E_\epsilon {e^{-{L_0}(t-s)}} \Big( M_\epsilon \hat f_\epsilon({T_{\epsilon}(s) u^\epsilon}) 
 - \hat f_0({T_0(s)M_\epsilon u^\epsilon}) \Big) \|_{Z^\alpha_\epsilon}ds.
\end{align*}

Since $u^\epsilon$ satisfies (\ref{LUE}) for all $\epsilon > 0$, $T_\epsilon$ is global defined, and $f$ is bounded function, we have that
$\{\hat f_\epsilon({T_{\epsilon}(s) u^\epsilon}) \in Z_\epsilon \, | \, s\in [0,t]\}$ is uniformly bounded.
Hence, we obtain by Theorem \ref{Tconvsgl} that there exists a constant $\hat C_1 = \hat C_1(\tau, C)$ such that
\begin{align*}
\int_0^t \| &\Big( {e^{-{L_\epsilon}(t-s)}}   - E_\epsilon {e^{-L_0(t-s)}} M_\epsilon \Big) 
\hat f_\epsilon({T_\epsilon(s) u^\epsilon})\|_{Z^\alpha_\epsilon}  ds \\
& \leqslant \int_0^t   \vartheta_\alpha(\epsilon) e^{-\omega (t-s)} (t-s)^{{\alpha -1}} \|\hat f_\epsilon({T_\epsilon(s)u^\epsilon})\|_{Z_{\epsilon}} ds \leqslant \hat C_1  \vartheta_\alpha(\epsilon) t^{\alpha - 1} \ \textrm{ for all } t \in (0,\tau).
\end{align*}

If $K$ is the uniform Lipschitz constant of the Nemitsk\u{\i}i operator $\hat f_\epsilon$, independent of $\epsilon$, we can use $E_{\epsilon} \hat f_0 = \hat f_\epsilon E_{\epsilon}$ and $M_\epsilon E_\epsilon = I$ to get
\begin{align*}
\int_0^t & \| E_\epsilon {e^{-{L_\epsilon}(t-s)}} 
\Big( M_\epsilon \hat f_\epsilon({T_\epsilon(s) u^\epsilon}) - \hat f_0({T_0(s) M_\epsilon u^\epsilon}) \Big) \|_{Z^\alpha_\epsilon}ds\\
& = \int_0^t \| E_\epsilon {e^{-{L_\epsilon}(t-s)}} M_\epsilon \Big( \hat f_\epsilon({T_\epsilon(s) u^\epsilon}) 
- \hat f_\epsilon (E_\epsilon {T_0(s)M_\epsilon u^\epsilon}) \Big) \|_{Z^\alpha_\epsilon}ds \\
& \leq \int_0^t \hat C_2 \, \| E_\epsilon \| \, \|M_\epsilon\| \, K e^{-w(t-s)} (t-s)^{-\alpha} 
\| T_\epsilon(s) u^\epsilon - E_\epsilon {T_0(s)M_\epsilon u^\epsilon} \|_{Z^\alpha_\epsilon},
\end{align*}
for some constant $\hat C_2=\hat C_2(w)$.
Hence,
\begin{equation*}
\varphi(t) \leqslant (1 + \hat C_1) \vartheta_\alpha(\epsilon)   t^{{\alpha -1}} 
+ \hat C_2 \, \| E_\epsilon \| \, \|M_\epsilon\| \, K \int_0^t\;(t- s)^{-\alpha} \varphi(s) \, ds
 \textrm{ on } (0,\tau),
\end{equation*}
where $\varphi(t):= e^{\omega t}\norma{{T_\epsilon(t) u^\epsilon} - E_\epsilon {T_0(t)M_\epsilon u^\epsilon}}_{Z^\alpha_\epsilon}$.
Thus, due to Gronwall's Inequality from \cite[Section 7.1]{Henry}, we get 
$$
\varphi(t) \leqslant  \hat C_3 \vartheta_\theta(\epsilon)   t^{{\alpha -1}}
$$
where $\hat C_3 = \hat C_3(\hat C_1, \hat C_2,  K, \tau, \|E_\epsilon \|, \|M_\epsilon\|)$ is a constant, and so,  \eqref{eq:nlsg} follows.

In order to show the upper semicontinuity of the attractors $\mathscr{A}_\epsilon$, we first note that by uniform $L^\infty(\Omega^\epsilon)$ bounds of the attractors given by \cite[Theorem 2.6]{ACB} and Remark \ref{Tspecconv}, we also obtain due to \eqref{MEE} that $\bigcup_{0 \le \epsilon \le \epsilon_0} M_\epsilon \mathscr{A}_\epsilon$ is a bounded set in $L^\infty(0,1)$.
Then, using the attractivity property of $\mathscr{A}_0$ in $Z_0$, we have that for any $\eta > 0$ there exists $\tau > 0$ such that 
$$
\inf_{\varphi \in \mathscr{A}_0} \| T_0(\tau) M_\epsilon \varphi^\epsilon - \varphi \|_{Z_0^\alpha} \leq (H_1+G_1)^{-1/2} \eta/2, \quad \forall \varphi^\epsilon \in  \mathscr{A}_\epsilon \textrm{ and } 0 \leq \epsilon \leq \epsilon_0.
$$ 
Thus 
$$
\inf_{\varphi \in \mathscr{A}_0} \| E_\epsilon T_0(\tau) M_\epsilon \varphi^\epsilon - E_\epsilon \varphi \|_{Z_\epsilon^\alpha} \leq \eta/2, \quad \forall \varphi^\epsilon \in  \mathscr{A}_\epsilon \textrm{ and } 0 \leq \epsilon \leq \epsilon_0.
$$ 

Now, due to the convergence of the nonlinear semigroups \eqref{eq:nlsg} with $t = \tau$, we have that there exists $\epsilon_1 > 0$ such that for all $0 \leq \epsilon \leq \epsilon_1$
$$
\| T_\epsilon(\tau)\varphi^\epsilon - E_\epsilon T_0(\tau) M_\epsilon \varphi^\epsilon \|_{Z_\epsilon^\alpha} \leq \eta/2, \quad \forall \varphi^\epsilon \in \mathscr{A}_\epsilon.
$$
Consequently, since $\mathscr{A}_\epsilon$ is an invariant set by the flow, $T_\epsilon(\tau) \varphi^\epsilon = \varphi^\epsilon$, and so, we get 
$$
\inf_{\varphi \in \mathscr{A}_0} \| \varphi^\epsilon - E_\epsilon \varphi \|_{Z_\epsilon^\alpha} \leq \eta, \quad \forall \varphi^\epsilon \in  \mathscr{A}_\epsilon \textrm{ and } 0 \leq \epsilon \leq \epsilon_1.
$$


Finally, we show the upper semicontinuity of the set of stationary states $\mathcal{E}_\epsilon$. Let us use here the characterization discussed in \eqref{USCC}.
First, note $u^\epsilon \in \mathcal{E}_\epsilon$ if only if satisfies 
\begin{equation} \label{FFNL}
\int_{\Omega^\epsilon} \Big\{ \frac{\partial u^\epsilon}{\partial x_1} \frac{\partial \varphi}{\partial x_1} 
+ \frac{1}{\epsilon^2} \frac{\partial u^\epsilon}{\partial x_2} \frac{\partial \varphi}{\partial x_2}
+ u^\epsilon \varphi \Big\} dx_1 dx_2 = \int_{\Omega^\epsilon} f(u^\epsilon) \varphi \, dx_1 dx_2,  \quad \forall \varphi \in H^1(\Omega^\epsilon).
\end{equation}
Hence, substituting $\varphi = u^\epsilon$ in \eqref{FFNL}, we get  
$$
\begin{gathered}
\Big\| \frac{\partial u^\epsilon}{\partial x_1} \Big\|_{L^2(\Omega^\epsilon)}^2
+ \frac{1}{\epsilon^2}\Big\| \frac{\partial u^\epsilon}{\partial x_2} \Big\|_{L^2(\Omega^\epsilon)}^2
+ \| u^\epsilon \|_{L^2(\Omega^\epsilon)}^2
\le \| f(u^\epsilon) \|_{L^2(\Omega^\epsilon)} \| u^\epsilon \|_{L^2(\Omega^\epsilon)},
\end{gathered}
$$
Thus, since $f \in \mathcal{C}^2(\R,\R)$, there exists $C=C(f) > 0$, independent of $\epsilon > 0$, such that
$$
\begin{gathered}
\| u^\epsilon\|_{Z_\epsilon^{1/2}} \le C.
\end{gathered}
$$

Therefore, we obtain from \ref{eq:nlsg} that there exists $u_0 \in \mathcal{E}_0$, as well as a subsequence $u^\epsilon \in \mathcal{E}_\epsilon$ with $\|  u^\epsilon - E_\epsilon u_0 \|_{Z_\epsilon^{\alpha}} \to 0$, as $\epsilon \to 0$, for all $0 \leq \alpha < 1/2$. 
Indeed, since $T_\epsilon(t) u^\epsilon = u^\epsilon$ for each $t>0$, we have 
\begin{equation} \label{CUE}
\| u^\epsilon - E_\epsilon T_0(t) M_\epsilon u^\epsilon \|_{Z^\alpha_\epsilon} \to 0, 
\quad \textrm{ as } \epsilon \to 0,
\end{equation}
and then, $T_0(t) M_\epsilon u^\epsilon = M_\epsilon u^\epsilon$ for each $t>0$ implying that the uniformly bounded sequence $\{ M_\epsilon u^\epsilon \}_{ \epsilon>0 } \subset Z_0$ is $E$-convergent satisfying \eqref{CUE}. 
Notice that we can take $u_0 \in Z_0$ as a limit from $\{ M_\epsilon u^\epsilon\}_{\epsilon>0} \subset Z_0$.
Let us show now that $u_0 \in \mathcal{E}_0$. Using once more $T_\epsilon(t) u^\epsilon = u^\epsilon$ for any $t>0$, we have 
$$
\| u^\epsilon - E_\epsilon T_0(t)  u_0 \|_{Z^\alpha_\epsilon} 
= \| T_\epsilon(t)  u^\epsilon - E_\epsilon T_0(t)  u_0 \|_{Z^\alpha_\epsilon} \to 0, 
\quad \textrm{ as } \epsilon \to 0,
$$ 
for any $t>0$. Thus $T_0(t)  u_0 = u_0$ for all $t>0$ and $u_0 \in \mathcal{E}_0$ completing the proof.

\end{proof}


\end{document}